\newtheorem{pro}{Proposition}[section]
\newtheorem{teo}[pro]{Theorem}
\newtheorem{lem}[pro]{Lemma}
\newtheorem{defi}[pro]{Definition}
\newtheorem{cor}[pro]{Corollary}
\newtheorem{rk}[pro]{Remark}
\newtheorem{ex}[pro]{Example}
\def\U{\mathcal{U}}
\newcommand\restr[2]{{
		\left.\kern-\nulldelimiterspace 
		#1 
		\vphantom{\big|} 
		\right|_{#2} 
}}
\newcommand{\Ind}{\mathrm{Ind}}
\newcommand{\T}{\mathcal{T}}
\newcommand{\A}{\mathcal{A}}
\newcommand{\B}{\mathcal{B}}
\newcommand{\C}{\mathcal{C}}
\newcommand{\ltt}{\mathcal}
\def\temp{&} \catcode`&=\active \let&=\temp
\begin{document}
\title[Triangular Matrix Categories]{Triangular Matrix Categories Over Path Categories and Quasi-hereditary Categories, as well as One-Point Extensions by projectives}
\author{R. Ochoa, M. Ort\'iz}
\thanks{2000 {\it{Mathematics Subject Classification}}. Primary 18E10. Secondary 18G25.\\
Key words and phrases. Path Categories, Functor Categories,  Quasi-hereditary Categories, Matrix Categories.}
\maketitle

\date{}
\begin{abstract}

In this paper, we prove that the lower triangular matrix category $\Lambda =\left  [ \begin{smallmatrix} \mathcal{T}&0\\ M&\mathcal{U} \end{smallmatrix} \right ]$, where $\T$ and $\U$ are quasi-hereditary $\mathrm{Hom}$-finite Krull-Schmidt $K$-categories and $M$ is a $\mathcal U\otimes_K \mathcal T^{op}$-module that satisfies suitable conditions, is quasi-hereditary in the sense of \cite{LGOS1} and \cite{Martin}. Moreover, we solve the problem of finding quotients of path categories isomorphic to the lower triangular matrix category $\Lambda$, where $\mathcal T=K\mathcal{R/J}$ and $\mathcal U=K\mathcal{Q/I}$ are path categories of infinity quivers modulo admissible ideals. Finally, we study the case  where $\Lambda$ is a path category of a quiver $Q$ with relations and $\mathcal U$ is the full additive  subcategory of $\Lambda$ obtained by deleting a source vertex $*$ in $Q$ and $\mathcal T=\mathrm{add} \{*\}$. We then show that there exists an adjoint pair of functors $(\mathcal R, \mathcal E)$ between the functor categories $\mathrm{mod} \ \Lambda$ and $\mathrm{mod} \ \mathcal U$ that preserve orthogonality and exceptionality; see \cite{Assem1}. We then give some examples of how to extend classical tilting subcategories of $\mathcal U$-modules to classical tilting subcategories of $\Lambda$-modules.
\end{abstract}

\section*{Introduction}
\label{intro}

Rings of the form $\left [ \begin{smallmatrix} T&0 \\ M&U \end{smallmatrix}\right]$ where $T$ and $U$ are rings and $M$ is a $T-U$-bimodule have appeared often in the study of the representation theory of artin rings and algebras  \cite{Aus3, DR2, Gordon, Fossum}. Such rings are called \emph{triangular matrix rings} and appear  in the study of homomorphic images of hereditary artin algebras and of the decomposition of algebras and the direct sum of two rings. Triangular matrix rings and their homological properties have been widely studied. The so-called one-point extension is a special case of the triangular matrix algebra, and these types of algebras have been studied in several contexts; see for example \cite{Chase, Fields, Haghany}. Recently in \cite{Zhu}, B. Zhu studied the triangular matrix algebra $\Lambda$ where $T$ and $U$ are quasi-hereditary algebras, and he proved that under suitable conditions on $M$, $\Lambda$ is a quasi-hereditary algebra.

The notion of quasi-hereditary algebra and highest weight category were introduced and studied by  E. Cline, B. Parshall and L. Scott \cite{CPS1, CPS0, Scott}. Highest weight categories arise in the representation theory of Lie algebras and algebraic groups. For the setting of finite dimensional algebras, quasi-hereditary algebras were amply studied by V. Dlab and M. Ringel in \cite{Dlab0, DR2, DR1, Rin}. In addition, they introduced the set of standard modules ${}_\Lambda\Delta$ associated with an algebra $\Lambda.$ More recently, M. Ringel studied the homological properties of the category $\mathcal{F}({}_\Lambda\Delta)$ of ${}_\Lambda\Delta$-filtered $\Lambda$-modules and constructed the characteristic tilting module ${}_\Lambda T$ associated with $\mathcal{F}(_\Lambda\Delta).$    

On the other hand, the idea that additive categories are rings with several objects was developed by B. Mitchell, see [41], who showed that a substantial amount of non-commutative ring theory is still true in this generality and that familiar theorems for rings originate from the natural development of category theory. Following this approach, M. Auslander and I. Reiten worked exhaustively with the functor categories and found numerous applications to the development of the representation theory of artin algebras; see for instance \cite{Aus, Aus1,Aus2,Aus3,Aus4}, to name a few.

More recently, R. Mart\'{\i}nez-Villa and \O. Solberg, motivated by the work on functor categories by M. Auslander in \cite{Aus2, Aus}, studied the Auslander-Reiten components of finite dimensional algebras. They did so in order to establish when the category of graded functors is Noetherian \cite{MVS1, MVS2, MVS3}. Recently, R. Mart\'{\i}nez-Villa and M. Ort\'{\i}z studied tilting theory in arbitrary functor categories, in \cite{MVO1, MVO2}. They proved that most of the properties that are satisfied by a tilting module over an artin algebra also hold for functor categories.  It is worth noting that the study of Auslander-Reiten components of finite dimensional algebras is related to the study of some infinite quivers that play an important role in the representation theory of coalgebras; see \cite{Bautista}.

Following the line of the above mentioned works, M. Ort\'{\i}z introduced in \cite{Martin} the concept of quasi-hereditary category to study the Auslander-Reiten components of a finite dimensional algebra $\Lambda.$ Similarly, as the standard modules appear in the theory of quasi-hereditary algebras, the  concept of standard functors appears in this context, which resulted in a generalization of the notion of standard modules. As a consequence, a connection is obtained between highest weight categories and quasi-hereditary categories as stated by  H. Krause in \cite{Krause2}.

Finally, following Mitchell's philosophy, the concept of the triangular matrix category is introduced  in \cite{LGOS1,LGOS2}, as the analogue of the triangular matrix algebra to the context of rings with several objects, by V. Santiago, A. Leon and M. Ortiz, and they obtain some applications  to path categories given by infinite quivers, the construction of recollements and the study of functorially finite subcategories in functor categories. 

The aim of this paper is to show that infinite quiver path categories  and quasi-hereditary categories can be constructed as triangular matrix categories,  generalizing some of the results obtained by B. Zhu in \cite{Zhu}. Furthermore if, this type of matrices are of  the one-point extension type, some homological properties can be described. It is worth mentioning that recently in \cite{Marcos} similar results have been obtained in the context of standardly stratified lower triangular $\mathbb K$-algebras with enough idempotents.

We outline the content of the paper section-by-section as follows.

In section 1, we recall basic results about path categories, functor categories, quasi-hereditary categories and triangular matrix categories.

In section 2, following the same arguments given by Leszczy\'nski in  \cite[Lemma  1.3.]{Leszczynski}, we prove that the tensor product of two path categories is again a path category  as defined by Ringel \cite{Rin2}. Thus we will use this result to solve the problem of finding quotients of path categories isomorphic to the lower triangular matrix category $ \left( \begin{smallmatrix} K\mathcal R/\mathcal J&0 \\ M&K \mathcal Q/\mathcal I \end{smallmatrix}\right)$, where $K\mathcal R/\mathcal J$ and $K\mathcal Q/\mathcal I$ are path categories modulo admissible ideals and $M$ is a functor from $K\mathcal Q/\mathcal I\otimes( K\mathcal R/\mathcal J)^{op}$ to the category  $\mathrm{mod}\ K$; see \cite{Oystein}.

In section 3, we  generalize a result given in  \cite[Theorem 3.1]{Zhu}. Specifically, we prove that if $\mathcal U$ and $\mathcal T$ are $\mathrm{Hom}$-finite Krull-Schmidt quasi-hereditary categories with respect to filtrations $\{\mathcal U_j\}_{0\le j\le n}$ and $\{\mathcal T_j\}_{j\ge 0}$ of $\mathcal U$ and $\mathcal T$, respectively, consisting of additively  closed subcategories and $M_T\in\mathcal F(_{\mathcal U}\Delta)$ for all $T\in\mathcal T$, then $\Lambda= \left(\begin{smallmatrix} \mathcal T & 0 \\ M & \mathcal{U} \end{smallmatrix}\right) $ is quasi-hereditary with respect to a certain filtration  $\{\Lambda_j\}_{j\ge 0 }$. Moreover, we obtain a characterization of the category  of the $_\Lambda \Delta$-filtered $\Lambda$-modules.

In section 4, we consider a strongly locally finite quiver $Q=(Q_0,Q_1)$ with relations $I$. Then, given a source $*$ in $Q$ and by setting the full additive subcategories of $\mathcal C:= K\mathcal Q/\mathcal I$, $\mathcal U=\mathrm{add} \{Q_0-\{*\}\}$ and $\mathcal T:=\mathrm{add}\{*\}$, we see that $\mathcal C$ is isomorphic to a triangular matrix category $\Lambda$, and we  prove that there exist an adjoint pair of  additive functors $\mathcal R:\mathrm{mod}\ \Lambda\rightarrow \mathrm{mod}\ \mathcal U$ and $\mathcal E:\mathrm{mod}\ \mathcal U\rightarrow \mathrm{mod}\ \Lambda$  which preserve orthogonality and exceptionality, generalizing some results given in \cite{Assem1}. Further on we apply this result to extend classical tilting categories to functor categories.


\section{Preliminaries}
\bigskip
\subsection{$K$-categories, Path Categories,  Representations and Functor Categories} In this part, we recall some basic definitions to approach this work.  The reader can consult  \cite{Assem} and \cite{Barot} for more details. 
 
 {\sc $K$-Categories } 
 Let  $K$ be a field. A category $\mathcal C$ is a $K$-category if for each pair  of objects $X$ and $Y$ in $\mathcal C$, the set of morphisms $\mathcal{C}(X,Y)$ is equipped with a $K$-vector space structure such that the composition $\circ$ of morphisms in $\mathcal{C}$ is a
$K$-bilinear map.  A $K$-category $\mathcal C$ is called $\mathrm{Hom}$-\textbf{finite} if  $\mathrm{dim}_K\mathcal{C}(X,Y)<\infty$.

{\sc Krull-Schmidt categories.}  A Krull-Schmidt category  is an additive category such that each object decomposes into a finite direct sum of indecomposable objects with local endomorphism rings.

{\sc Ideals.} Let $\mathcal C$ be an additive $K$-category. A class $I$  of morphisms of $\mathcal C$ is a \textbf{two-sided ideal} in $\C$ if:
(a) the zero morphism $0_X\in \mathcal C(X,X)$ belongs to $I$;
(b) if $f,g:X\rightarrow Y$  are morphisms in $I$ and $\lambda$, $\mu\in K$, then $\lambda f+ \mu g\in I$;
(c) if $f \in I$  and $g$ is a morphism in $\mathcal C$  that is left-composable with $f$, then $g\circ f\in I$ and 
(d) if $f \in I$  and $h$ is a morphism in $\mathcal C$ that is right-composable with $f$, then $ f\circ h\in I$. 
Equivalently, a two-sided ideal $I$ of $\mathcal C$  can be considered as a subfunctor 
$I (-,-) \subseteq \mathcal C (-,- ) : \mathcal C^{op} \times \mathcal C\rightarrow \mathrm{Mod}\ K $, defined by assigning to each pair $(X,Y)$ of objects 
$X$,$Y$ of $\mathcal C$ a  $K$-subspace $I(X,Y )$ of $\mathcal C(X,Y ) $ such that:
(i) if $f \in I(X,Y )$ and $g\in\mathcal C(Y,Z)$, then $gf \in I(X,Z)$; and
(ii) if $f \in  I(X,Y )$ and $h\in\mathcal C(U,X)$,  then $fh \in I(U,Z)$.

Given a two-sided ideal $I$ in an additive $K$-category $\mathcal C$, the \textbf{quotient category} $\mathcal C/I$ that will be the category with objects that are objects are the objects of $\mathcal C$ and the space of morphisms from $X$ to $Y$ in $\mathcal C/I$ is the quotient space $(\mathcal {C}/I)(X,Y) = \mathcal C(X,Y)/I(X,Y)$ of $\mathcal C(X,Y)$. It is easy to see that the quotient category $ \mathcal C/I$ is an additive $K$-category, and the projection functor $ \pi : \mathcal C\rightarrow \mathcal C/I$ assigning to each $f : X \rightarrow Y$  in $C$ the coset $f + I \in \mathcal{C}/I (X, Y )$ is a $K$-linear functor. Moreover, $\pi $ is full and dense, and $\mathrm{Ker} (\pi) =I$.

 The \textbf{ (Jacobson) radical} of an additive $K$-category $\mathcal C$  is the two-sided ideal $\mathrm{rad}_{\mathcal C}$ in $\mathcal C$ defined by the formula $\mathrm{rad}_{\mathcal C}(X,Y)=\{h\in\mathcal C(X,Y):1_X-gh \text{ is invertible for any } g\in\mathcal C(Y,X)\}$ for all objects $X$ and $Y$ of $\mathcal C$.

\begin{rk}
Let $I$ be an ideal  in a $K$-category $\mathcal C$. We clearly see then that $f=(f_{ij}): \oplus_{i=1}^n X_i\rightarrow  \oplus_{j=1}^m Y_j$ lies in $ I(\oplus_{i=1}^nX_i,  \oplus_{j=1}^mY_j)$ if and only if $f_{ij}$ lies in $I(X_i,Y_j)$ for all $i$ and $j$, such that $1\le i\le n$ and $1\le j\le m$ .
\end{rk}

{\sc Tensor Product of $K$-categories.}   Let $\mathcal C$ and $\mathcal C'$ be $K$-categories. The tensor product   $\mathcal C\otimes_K \mathcal C'$ is the $K$-category whose class of objects is $\mathrm{Obj } \ \mathcal C\times \mathrm{Obj }\ \mathcal C'$, where the set of morphisms  from $(p_1,q_1)$ to $(p_2,q_2)$ is the ordinary tensor product  $\mathcal C(p_1,p_2)\otimes \mathcal C'(q_1,q_2)$. The composition $$\mathcal C(p_1,p_2)\otimes \mathcal C'(q_1,q_2)\times \mathcal C(p_2,p_3)\otimes \mathcal C'(q_2,q_3)\rightarrow \mathcal C(p_1,p_3)\otimes \mathcal C'(q_1,q_3)$$ is given by the rule 
$((f_1\otimes g_1),(f_2\otimes g_2))\mapsto (f_2f_1\otimes g_2 g_1)$.  This compositions is bilinear; see \cite{Mitchell}.

{\sc Quivers, path algebras and path categories.}  A \textbf{quiver} is an oriented graph, formally denoted by a quadruple $Q=(Q_0,Q_1,s,t)$, with a set of vertices $Q_0$ and a set of arrows $Q_1$, and two maps $s,t: Q_1\rightarrow Q_0$, called \emph{source} and \emph{target}, defined by $s(a\rightarrow b)=a$ and $t(a\rightarrow b)=b$, respectively, if $\alpha:a\rightarrow b$ is an arrow in $Q_1$.

A path of length $l\ge 1$  from $a$ to $b$ in a quiver $Q$ is of the form $(a|\alpha_1,\ldots,\alpha_l|b)$ with arrows $\alpha_i$ satisfying $t(\alpha_i)=s(\alpha_{i+1})$ for all  $1\le i\le l$ and $a=s(\alpha_1)$ as well as $b=t(\alpha_l)$. In addition, for any vertex $a$ in $Q_0$, a path of length 0 from $a$ to itself is denoted by $\epsilon_a$.

Given a quiver $Q$, it is \textbf{path category} $K\mathcal Q$ is an additive category, with objects being direct sums of indecomposable  objects. The indecomposable  objects  in the path category are given by the set  $Q_0$, and given $a,b\in Q_0$, the set of maps from $a$ to $b$ is given by the $K$-vector space with basis the set of all paths from $a$ to $b$. The composition of maps is induced from the usual composition of paths:
\begin{equation}\label{pathcomp}
 (a|\alpha_1,\ldots,\alpha_l|b)(b|\beta_1,\ldots,\beta_s|c)=(a|\alpha_1,\ldots,\alpha_l\beta_1,\ldots,\beta_s|c),
\end{equation}
where $(a|\alpha_1,\ldots,\alpha_l|b)$ is a path from $a$ to $b$ and $(b|\beta_1,\ldots,\beta_s|c)$ is a path from $b$ to $c$.
  
  Similarly, the \textbf{path algebra} of $Q$ denoted by $KQ$, is the $K$-vector space with basis the set of all paths in $Q$, and the product of two paths is defined by (\ref{pathcomp}) if they are composable,  and it is zero if they are non-composable. In $KQ$, any ideal $I$ is generated by a set of paths $\{\rho_i| i\}$, that is $I=\langle \rho_i| i\ \rangle $. Let  $I$ be an ideal in $KQ$, then given a pair of finite sets of vertices $\{X_i\}_{i=1}^n$, $\{Y_j\}_{j=1}^m$ we set $\mathcal I(\oplus_{i=1}^n {X_i},  \oplus_{j=1}^m{Y_j})=\{(f_{ij})\in K\mathcal Q(\oplus_{i=1}^n {X_i}, \oplus_{j=1}^m{Y_j})|f_{ij}\in I\}$. This allows us to define an ideal $\mathcal I$ in $K\mathcal Q$, and we refer to it as the ideal generated by $I$. If $I\subset KQ$ is generated by the set of paths $\{\rho_i| i\}$, we say that $\mathcal I$ is generated by  the set $\{\rho_i| i\}$.
  
The ideal generated by all arrows is denoted by $KQ^{+}$. Note that $(KQ^{+})^n$ is the ideal generated by all paths of length $\ge n$. Given vertices $a,b\in Q_0$, a finite linear combination $\sum_{w}c_w w$ with $c_w\in K$ where $w$ are paths of lengths $\ge 2$ from $a$ to $b$ is called a \textbf{relation} on 
 $Q$.  Any ideal $I\subseteq  (K Q^{+})^2$ can be generated, as an ideal, by relations. An ideal $I\subset KQ$ is called \textbf{admisible} if it is generated by a set of relations. We then say that an ideal $\mathcal I$ in $K\mathcal Q$ is admissible if it is generated by an admissible ideal in $KQ$.
 
{\sc Representations of Quivers.}
 A representation of a quiver $Q$ is a pair $V=\left ( (V_i)_{i\in Q_0}, (V_\alpha)_{\alpha\in Q_1}\right )$, where each element of the family $\{V_i\}_{i\in Q_0}$ is a vector space and $V_\alpha: V_{s(\alpha)}\rightarrow V_{t(\alpha)}$ is a $K$-linear map. Let $V$ and $ W$ be two representations of $ Q$. A morphism from $V$ to $W$ is a family of linear maps $f=(f_i:V_i\rightarrow W_i)_{i\in Q_0}$  such that for each arrow  $\alpha: i\rightarrow j$  we have $f_jV_\alpha=W_\alpha f_i$. We denote by $\mathrm{rep}\  Q$ the abelian category that  has as objects the representations of $Q$ and as morphisms just the morphisms of representations.  Let $\rho=(a|\alpha_1,\ldots,\alpha_l|b)$ a path in $Q$, we set $V_\rho=V_{\alpha_l}\circ \cdots \circ V_{\alpha_1}$. Let $I\subset KQ$ be an ideal, we then say the representation $V$ is \textbf{bounded} by $I$ if $V_\rho=0$ for all $\rho\in I$.  The full subcategory of $\mathrm{rep}\ Q$ consisting of representations bounded by $I$ is denoted by $\mathrm{rep} \ (Q,I)$.
  
{\sc Strongly locally finite quivers. }  Let $Q$ be a quiver. For $x\in Q_0$, we denote by $x^+$ and $x^{-}$  the set of arrows starting in $x$ and the set of arrows ending in $x$, respectively. Recall that x is a \textbf{sink vertex} or a \textbf{source vertex} if  $x^{+}=\emptyset $ or $x^-=\emptyset$.  One says that $Q$ is \textbf{locally finite} if $x^+$ and $x^{-}$ are finite sets and \textbf{interval finite} if the set of paths from $x$ to $y$ is finite for any $x, y \in Q_0$. For short, we say that Q is \textbf{strongly locally finite} if it is locally finite and interval finite. In particular, $Q$ contains no oriented cycle in case it is interval finite. Note that under these conditions, if $Q$ is a strongly locally finite quiver, the path category $K\mathcal Q$ is a $\mathrm{Hom}$-finite Krull-Schmidt $K$-category; see \cite{Bautista}.
 
{\sc Functor categories.} Recall that a category $\C$ is said to be \textbf{skeletally small} if it has a small dense subcategory $\C'$, see \cite{Aus}. Let $\C$ be a $\mathrm{Hom}$-finite Krull-Schmidt and skeletally small $K-$category. The abelian category $(\mathcal C, \mathbf{Ab})$ is the category of all additive covariant functors from $\mathcal C$ to the category of abelian groups, which we will call $\mathcal C$-modules. Given two $\mathcal C$-modules $F$ and $G$, the set of morphisms $\mathrm{Hom}_{(\mathcal C, \mathbf{Ab})}(F,G)$ is denoted simply by $\mathrm{Hom}_\mathcal C(F,G)$. Following \cite{Aus, Aus1}, $(\mathcal C, \mathbf{Ab})$ is denoted by $\mathrm{Mod}\ \mathcal C$. We recall that a $\mathcal C$-module $M$ is \textbf{finitely presented} if an exact sequence $P_1\rightarrow P_0\rightarrow M\rightarrow 0$ of $\mathcal C$-modules exist where $P_0$ and $P_1$ are projective finitely generated $\mathcal C$-modules. We denote by $\mathrm{mod}\ \mathcal C$ the full subcategory of $\mathrm{Mod}\ \mathcal C$ consisting of finitely presented $\C$-modules. Let $M$ be a $\mathcal C$-module, so  each $C$ in $\mathcal C$ the abelian group $M(C)$ has  a structure as a $\mathrm{End}_\mathcal C(C)$-module and hence as a $K$-module since $\mathrm{End}_\mathcal C(C)$ is a $K$-algebra. We denote by $(\mathcal C, \mathrm{mod} \ K)$ the full category of $\mathrm{Mod}\ \mathcal C$ of all $\mathcal C$-modules  such that $M(C)$ is a finitely generated $K$-module. The category $(\mathcal C, \mathrm{mod} \ K)$ is an abelian category with the property that the inclusion $(\mathcal C, \mathrm{mod} \ K)\rightarrow \mathrm{Mod}\ \C$ is exact and contains $\mathrm{mod}\ \C$ as a full subcategory.
 
 Let $Q$ be a quiver and $I$ be an ideal $I\subset KQ$. Set $\mathcal C=K\mathcal Q/\mathcal I$. Then each representation $V=\left ( (V_i)_{i\in Q_0}, (V_\alpha)_{\alpha\in Q_1}\right )$ in $\mathrm{rep} \ (Q,I)$ defines a $\mathcal C$-module  $\tilde V$ in $(\mathcal C, \mathrm{mod} \ K)$ by setting $\tilde V(i)=V_i$ and $\tilde V(\alpha)=V_\alpha$. 
 
 In general, the functor $D: (\mathcal C, \mathrm{mod} \ K)\rightarrow (\mathcal C^{op}, \mathrm{mod} \ K)$ given by $D(M)(X)$ $=\mathrm{Hom}_K (M(X),K)$ for all $X$ in $\mathcal C$ defines a duality between  $(\mathcal C, \mathrm{mod} \ K)$ and  $(\mathcal C^{op}, \mathrm{mod} \ K)$, and we refer to it as the \textbf{standard duality}. 
 
  


\subsection{Quasi-hereditary categories and triangular matrix categories}

Assume $\mathcal C$ is a $\mathrm{Hom}$-finite Krull-Schmidt $K$-category. In order to generalize the notion of quasi-hereditary category, the notion of  heredity ideal and heredity chain is introduced in \cite{Martin}.  

{\sc Heredity Ideals.} A two-sided ideal $I$ in $\mathcal C$ is called (left) \textbf{heredity} if the following conditions hold: (i) $I^2=I$, i.e., $I$  is an idempotent ideal; (ii) $I\mathrm{rad}\ \C(-,?)I=0$, and  (iii) $I(X,-)$ is a projective finitely generated  $\C-$module for all $X\in \C$. $\mathcal C$ is called quasi-hereditary if there exist a chain $\{I_j\}_{j\in J}$, where $J$ is at most countable of two-sided ideals $0=I_0\subset I_1\subset \cdots \subset \mathcal C(-,?)$, which is exhaustive (that is, $\cup_{j\in J}I_j=\mathcal C(-,?)$), and $I_{j}/I_{j-1}$ is heredity in the quotient category $\mathcal C/I_{j-1}$. Such a chain is called a \textbf{heredity chain}.

Let $B$ be a full additive subcategory of $\mathcal C$. Given $C,C'\in\mathcal C$, we denote by $I_\mathcal B(C,C')$ the subset of $\mathcal C(C,C')$ consisting of morphisms which factor through some object in $\mathcal B$. This allows us to define the two-sided ideal $I_{\mathcal B}(-,?)$ which is an   idempotent ideal in $\mathcal C$. Moreover, if we denote by $\mathrm{Tr}_{\{\mathcal C(E,-)\}_{E\in\mathcal B}} \mathcal{C}(X,-)$, $X\in\mathcal C$, the trace of $\{\mathcal C(E,-)\}_{E\in\mathcal B} $ in $\mathcal{C}(X,-)$, we have $\mathrm{Tr}_{\{\mathcal C(E,-)\}_{E\in\mathcal B}} \mathcal{C}(X,-)=I_{\mathcal B}(X,-)$. 

{\sc Quasi-hereditary categories.} Assume we have an exhaustive filtration $\{\B_j\}_{j\geq 0}$ of $\C$ into additive full subcategories (that is, $\cup_{j\ge 0} \mathcal B_j=\mathcal C$ ). We then have an exhaustive chain of two-sided idempotent ideals:
\[
\{0\}=I_{\B_0} \subset I_{\B_1} \subset \cdots I_{\B_{j-1}} \subset I_{\B_j} \subset \cdots \subset \C(-,?).
\]
Note that  $\frac{I_{\B_j}}{I_{\B_{j-1}}}$ is an idempotent ideal in the quotient category  $\frac{\C}{I_{\B_{j-1}}}$ since $I_{\B_{j}}$ and  $I_{\B_{j-1}}$ are idempotents in  $\C$ and 
\[
\left(\frac{I_{\B_j}}{I_{\B_{j-1}}}\right)^2= \frac{I_{\B_j}}{I_{\B_{j-1}}}\frac{I_{\B_j}}{I_{\B_{j-1}}}=\frac{I_{\B_j}^2+I_{\B_{j-1}}}{I_{\B_{j-1}}}=\frac{I_{\B_j}}{I_{\B_{j-1}}}.\\
\]

The above motivates us  to introduce the principal definition in this section.

\begin{defi}
	Let  $\C$  be a $\mathrm{Hom}$-finite Krull-Schmidt  $K-$category. Assume that  $\{\B_j\}_{j\geq 0}$  is an  exhaustive filtration of  $\C$  into full additive subcategories. We say that   $\C$ is quasi-hereditary with respect to $\{\B_j\}_{j\geq 0}$ if 
		\[
		\{0\}=I_{\B_0} \subset I_{\B_1} \subset \cdots I_{\B_{j-1}} \subset I_{\B_j} \subset \cdots \subset \C(-,?)
		\]
is a heredity chain.
\end{defi}

Recall that a full  additive subcategory $\mathcal B$ of $\mathcal C$ is called additively closed if it is closed under direct summands and isomorphisms. The following result  given  in \cite{Martin} will be useful in the remainder of this work.

\begin{teo}\label{Mar1}
	Let  $\{\mathcal{B}_j\}_{j\geq 0}$ be an  exhaustive filtration of  $\mathcal{C}$ into additively closed full subcategories . Then  $\mathcal{C}$ is quasi-hereditary with respect to  $\{\mathcal{B}_j\}_{j\geq 0}$  if and only if the following conditions  hold:
	\begin{enumerate}
		\item[(i)]  $\mathrm{rad}_{\mathcal{C}}(E,E')=I_{\mathcal{B}_{j-1}}(E,E')$, for all pairs of objects $E,E'\in\mathrm{ Ind} \ \mathcal{B}_{j}-\mathrm{ Ind} \ \mathcal{B}_{j-1}$ ;
		\item[(ii)]  and for all $X \in \mathcal{C}$ and  $j\geq 1$, there exists an exact sequence 
		\[
		\xymatrix{\mathcal{C}(E_{j-1},-) \ar[r] & \mathcal{C}(E_{j},-) \ar[r] & I_{\mathcal{B}_{j}}(X,-) \ar[r] \ar[r] & 0},
		\]
		with  $E_{j} \in \mathcal{B}_{j}$ and $E_{j-1} \in \mathcal{B}_{j-1}$.
	\end{enumerate}
\end{teo}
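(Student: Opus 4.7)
The plan is to verify, at each level $j\geq 1$, that the ideal $\bar I_j := I_{\B_j}/I_{\B_{j-1}}$ is a heredity ideal in the quotient $\bar{\C}:=\C/I_{\B_{j-1}}$. Idempotency of $\bar I_j$ is already recorded in the preamble, so the only remaining heredity axioms to check are (a) finite-generation and projectivity of $\bar I_j(X,-)$ as a $\bar{\C}$-module, and (b) the radical-squeeze identity $\bar I_j\cdot \rad_{\bar{\C}}\cdot \bar I_j=0$. I expect (a) to correspond exactly to condition (ii) of the theorem and (b) to correspond to condition (i), and I would prove each equivalence separately.

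For (a), I would start from the sequence in (ii) and apply the quotient functor. By Yoneda, any natural transformation $\C(E_{j-1},-)\rightarrow\C(E_j,-)$ is precomposition with some $\alpha:E_j\rightarrow E_{j-1}$; since $E_{j-1}\in\B_{j-1}$, this $\alpha$ automatically factors through an object of $\B_{j-1}$ and so lies in $I_{\B_{j-1}}(E_j,E_{j-1})$. Thus the induced map becomes zero in $\bar{\C}$, and the quotient of the exact sequence collapses to an isomorphism
\[
\bar{\C}(E_j,-) \xrightarrow{\sim} \bar I_j(X,-),
\]
identifying $\bar I_j(X,-)$ with a representable in $\modu\ \bar{\C}$, hence finitely generated projective. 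Conversely, given projectivity of $\bar I_j(X,-)$, one uses the trace description $I_{\B_j}(X,-)=\Tr_{\{\C(E,-)\}_{E\in\B_j}}\C(X,-)$ from the preamble to produce an epimorphism from a representable at an object $E_j\in\B_j$; controlling the kernel via the Krull–Schmidt structure and lifting through the quotient functor yields the required presentation by a representable at some $E_{j-1}\in\B_{j-1}$.

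For (b), I would use that $\B_j$ is additively closed and that $\C$ is Krull–Schmidt to reduce the radical-squeeze to a statement about indecomposables $E,E'\in \Ind\ \B_j\setminus \Ind\ \B_{j-1}$. For such $E,E'$ the identities factor through the objects themselves, so $\bar I_j(E,E')=\bar{\C}(E,E')$, and the Jacobson radical satisfies $\rad_{\bar{\C}}(E,E')=\rad_{\C}(E,E')/I_{\B_{j-1}}(E,E')$. The squeeze condition then reads $\rad_{\C}(E,E')\subseteq I_{\B_{j-1}}(E,E')$, which together with the automatic reverse inclusion (any morphism between indecomposables outside $\Ind\ \B_{j-1}$ that factors through $\B_{j-1}$ cannot be invertible) is exactly condition (i).

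The main obstacle I anticipate is the reverse direction in (a): from projectivity of $\bar I_j(X,-)$ one must reconstruct a presentation by the specific representables $\C(E_j,-)$ and $\C(E_{j-1},-)$ with $E_j\in\B_j$ and $E_{j-1}\in\B_{j-1}$. This requires lifting a projective cover through the quotient functor and crucially using the additive closure of $\B_{j-1}$ to identify the kernel with a representable on an object of $\B_{j-1}$. Matching $\rad_{\bar{\C}}$ correctly with the image of $\rad_{\C}$ modulo $I_{\B_{j-1}}$ — where the Krull–Schmidt and additive-closure hypotheses are genuinely exploited — is the other delicate point.
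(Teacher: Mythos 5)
The paper does not include a proof of this theorem; it is quoted from \cite{Martin}, so there is nothing in the present source to compare against directly. Assessed on its own terms, the overall structure of your plan---verify the heredity conditions for each $\bar I_j := I_{\B_j}/I_{\B_{j-1}}$ in $\bar{\C}:=\C/I_{\B_{j-1}}$, with idempotency already given in the preamble, matching condition (ii) with projectivity and condition (i) with the radical squeeze---is the natural one and is sound. Your argument from (ii) to projectivity is correct: the induced map $\bar{\C}(E_{j-1},-)\to\bar{\C}(E_j,-)$ vanishes because any $\alpha\in\C(E_j,E_{j-1})$ automatically factors through $E_{j-1}\in\B_{j-1}$, and right-exactness of the reduction $F\mapsto F/I_{\B_{j-1}}F$ then yields $\bar{\C}(E_j,-)\cong\bar I_j(X,-)$. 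Your treatment of the equivalence between (i) and the radical squeeze is also right, and you correctly identify where additive closure and Krull--Schmidt enter: the inclusion $I_{\B_{j-1}}(E,E')\subseteq\rad_\C(E,E')$ for $E,E'\in\Ind\B_j\setminus\Ind\B_{j-1}$ needs $\B_{j-1}$ closed under summands, while identifying $\rad_{\bar{\C}}(E,E')$ with $\rad_\C(E,E')/I_{\B_{j-1}}(E,E')$ needs that $\End_{\bar{\C}}(E)$ stays local, which again uses $1_E\notin I_{\B_{j-1}}(E,E)$ by additive closure; and the passage from the ideal equation $\bar I_j\,\rad_{\bar{\C}}\,\bar I_j=0$ to the pointwise statement on $\Ind\B_j\setminus\Ind\B_{j-1}$ uses $1_E\in\bar I_j$ in one direction and the Krull--Schmidt decomposition of factorizations through $\B_j$ in the other.

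The genuine gap is exactly where you flag it: the implication from projectivity to (ii). Knowing $\bar I_j(X,-)$ is finitely generated projective over $\bar{\C}$ gives a $\bar{\C}$-isomorphism $\bar I_j(X,-)\cong\bar{\C}(Z,-)$ for some $Z$, but you must still (a) deduce finite generation of $I_{\B_j}(X,-)$ itself over $\C$ and choose a generating object $E_j$ lying in $\B_j$, and (b) show the kernel of the resulting epimorphism $\C(E_j,-)\to I_{\B_j}(X,-)$ is covered by a representable at an object of $\B_{j-1}$. Saying one ``uses the trace description to produce an epimorphism from a representable at $E_j\in\B_j$'' is too quick: the trace only supplies an a priori infinite family of maps from representables at objects of $\B_j$, and selecting a single cover already presupposes the finite generation you are trying to prove. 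Step (a) is most naturally handled by induction on $j$ via the extension $0\to I_{\B_{j-1}}(X,-)\to I_{\B_j}(X,-)\to\bar I_j(X,-)\to 0$; step (b) requires combining the splitting of $\bar{\C}(E_j,-)\twoheadrightarrow\bar I_j(X,-)$ with the Krull--Schmidt structure to pin down the complementary summand and lift the relations. Neither step is actually carried out in your proposal, so as written it does not yet constitute a proof of that direction.
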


{\sc Standard $\mathcal C$-modules.}  Let $\mathcal C$ be a quasi-hereditary category with respect to a family  of additively closed subcategories $\{\mathcal B_j\}$. Each module $_\mathcal C\Delta_E(j):= \mathcal C(E,-)/I_{\mathcal B_{j-1}}(E, -)$, $E\in\mathrm {Ind}  \mathcal B_j-\mathrm{Ind}  \mathcal B_{j-1}$ is called \textbf{standard}, and $_\mathcal C\Delta(j)$ denotes the category consisting of the standard $\C$- modules $_\mathcal C\Delta_E(j)$. In addition, $_\mathcal C\Delta$ denotes the full subcategory   consisting of the standard $\C$- modules.
 
{\sc Filtered  $\mathcal C$-modules.}
Let $\A$ be an abelian category, and $\ltt{X}\subseteq \A.$ We denote by $\ltt{X}^{\amalg}$ the class of objects of $\A,$which are a finite direct sum of objects in $\ltt{X}.$  We say that $M\in \A$ is $\ltt{X}$-$\textbf{filtered}$ if there exists a chain $\{M_{j}\}_{j\ge 0}$ of subobjects of $M$ such that $M_{j+1}/M_{j}\in \ltt{X}^{\amalg}$ for $j\ge 0.$ In case $M=M_n$ for some $n\in\mathbb N$, we say that 
 $M$ has a finite $\ltt{X}$-filtration of length $n$. We denote by $\ltt{F}(\ltt{X})$ the class of
objects that are $\ltt{X}$-filtered and by $\ltt{F}_f(\ltt{X})$ the class of objects  that have a finite filtration. For $M \in \ltt{F}_f(\ltt{X})$, the $\ltt{X}$-length of $M$ can be defined as follows
$l_{\ltt X}(M):=\mathrm{min}\{ n\in \mathbb{N} : M \text{ has an $\ltt{X}$-filtration of length $n$}\}$.

By using the notion of $\ltt{X}$-length and induction, the following useful remark can be proven.
\begin{rk}\label{remark1}
Let $\ltt{X}$  be a class of objects in an abelian category $A$. Then, the
class $\ltt{F}_f (\ltt{X})$ is closed under extensions.
\end{rk}

 
 Given a  $\C-$module $F$, its trace filtration with respect to $\{\B_j\}_{j\geq 0}$ is given by
\[
\{0\}=F^{[0]} \subset F^{[1]} \subset F^{[2]} \subset \cdots \subset F^{[j-1]} \subset F^{[j]} \subset \cdots,
\]
where  $F^{[j]}:=Tr_{\{\C(E,-)\}_{E\in \B_j}}F$ and $F= \bigcup\limits_{j\geq 0}F^{[j]}$.

It is of interest to study the $ \C$-modules $F$ that possess a trace  filtration that satisfies $ \frac {F ^ {[j]}} {F ^ {[j-1]}} \in  {}_{\mathcal {C}}\Delta (j) ^{\amalg }$ for all $ j \geq 1 $. It then follows that these $ \C$-modules  are $ \Delta$-filtered. We denote the full subcategory of the $ \Delta$-  filtered modules  by $ \mathcal {F} (_\mathcal C\Delta) $.

The following result will be very useful in what follows.

\begin{lem}\label{lem5}
	Let $F \in \mathcal{F}(\Delta)$.
	\begin{enumerate}
		\item[(a)] For all  $j\geq 0$, $F^{[j]}$ has a presentation
		\begin{equation}\label{eq6}
		\C(E_{j-1},-) \rightarrow \C(E_{j},-) \rightarrow  F^{[j]} \rightarrow  0,\ E_{j-1} \in \B_{j-1}, E_{j}\in \B_{j}.
		\end{equation} 
		\item[(b)]  $F^{[j]}\cong \C \otimes_{\B_j}(F|_{\B_{j}})$; see \cite[Proposition A.3.2]{Dlab0}.
	\end{enumerate}
\end{lem}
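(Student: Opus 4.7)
The plan is to prove part $(a)$ by induction on $j$ and then deduce part $(b)$ from $(a)$ by restricting to $\B_j$ and applying the left adjoint $\C\otimes_{\B_j}-$. The main ingredients are Theorem \ref{Mar1}$(ii)$ (which provides presentations of the ideals $I_{\B_{j-1}}(E,-)$), projectivity of representables, the horseshoe lemma, and the additive closure of the filtration $\{\B_j\}_{j\ge 0}$.

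For $(a)$, the case $j=0$ is trivial since $F^{[0]}=0$, and the case $j=1$ is immediate because $F^{[1]}\in {_{\C}\Delta(1)}^{\amalg}$ and each ${_{\C}\Delta_E(1)}=\C(E,-)/I_{\B_0}(E,-)=\C(E,-)$ with $E\in\B_1$, so $F^{[1]}\cong\C(E_1,-)$ for some $E_1\in\B_1$. For the inductive step, assume the presentation $\C(E_{j-2},-)\to\C(E_{j-1}',-)\to F^{[j-1]}\to 0$ with $E_{j-2}\in\B_{j-2}$ and $E_{j-1}'\in\B_{j-1}$. Since $F^{[j]}/F^{[j-1]}\in {_{\C}\Delta(j)}^{\amalg}$ is a direct sum of modules of the form ${_{\C}\Delta_E(j)}=\C(E,-)/I_{\B_{j-1}}(E,-)$ with $E\in\B_j$, and since Theorem \ref{Mar1}$(ii)$ yields a presentation $\C(\widetilde E_{j-2},-)\to\C(\widetilde E_{j-1},-)\to I_{\B_{j-1}}(E,-)\to 0$ with $\widetilde E_{j-1}\in\B_{j-1}$, one splices these to produce a presentation
\[
\C(E'',-)\;\longrightarrow\;\C(\widehat E,-)\;\longrightarrow\;F^{[j]}/F^{[j-1]}\;\longrightarrow\;0,\qquad E''\in\B_{j-1},\;\widehat E\in\B_j.
\]
Applying the horseshoe lemma to $0\to F^{[j-1]}\to F^{[j]}\to F^{[j]}/F^{[j-1]}\to 0$, using the projectivity of $\C(\widehat E,-)$ to lift the right-hand surjection, gives $\C(E_{j-2}\oplus E'',-)\to\C(E_{j-1}'\oplus\widehat E,-)\to F^{[j]}\to 0$. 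Using $\B_{j-2}\subseteq\B_{j-1}\subseteq\B_j$ together with additive closure, we set $E_{j-1}:=E_{j-2}\oplus E''\in\B_{j-1}$ and $E_j:=E_{j-1}'\oplus\widehat E\in\B_j$ to obtain $(\ref{eq6})$.

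For $(b)$, one first checks that $F^{[j]}|_{\B_j}=F|_{\B_j}$: given $X\in\B_j$ and $x\in F(X)$, Yoneda identifies $x$ with a map $\C(X,-)\to F$, whose image lies in the trace $F^{[j]}$, so already $x\in F^{[j]}(X)$. Applying the exact restriction $(-)|_{\B_j}$ to the presentation from $(a)$, then the right-exact left adjoint $\C\otimes_{\B_j}-$, and using the natural isomorphism $\C\otimes_{\B_j}\B_j(E,-)\cong\C(E,-)$ for $E\in\B_j$, we obtain a presentation of $\C\otimes_{\B_j}(F|_{\B_j})$ by the same arrows as in $(a)$; comparison of cokernels yields $F^{[j]}\cong\C\otimes_{\B_j}(F|_{\B_j})$. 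This is exactly the content of \cite[Proposition A.3.2]{Dlab0} referenced in the statement.

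The delicate step is the inductive construction in $(a)$: the splicing of the two presentations (of $F^{[j-1]}$ and of $F^{[j]}/F^{[j-1]}$) must be arranged so that all resulting generators and relations land in the correct filtration layers. Additive closure of each $\B_k$ together with the nesting $\B_{j-2}\subseteq\B_{j-1}\subseteq\B_j$ is precisely what makes this bookkeeping go through, and the finite presentability of $F$ keeps all direct sums finite so that each $E_k$ is a bona fide object of $\B_k$.
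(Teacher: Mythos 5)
The paper itself does not prove Lemma~\ref{lem5}: it is stated without proof, citing \cite{Martin} for the surrounding framework and \cite[Proposition A.3.2]{Dlab0} for part (b), so there is no in-paper argument to compare against. Your proof is nonetheless a correct independent derivation and takes the natural route: for (a), induct on $j$, using the short exact sequence $0 \to I_{\B_{j-1}}(E,-) \to \C(E,-) \to {}_\C\Delta_E(j) \to 0$ together with Theorem~\ref{Mar1}(ii) to present each $\Delta_E(j)$, then splice with the inductive presentation of $F^{[j-1]}$ via the horseshoe lemma; for (b), verify $F^{[j]}|_{\B_j}=F|_{\B_j}$ by Yoneda and push the presentation from (a) through the exact restriction and the right-exact left adjoint $\C\otimes_{\B_j}-$, using $\C\otimes_{\B_j}\B_j(E,-)\cong\C(E,-)$. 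All the ingredients you invoke are available in this setting (quasi-heredity of $\C$ for Theorem~\ref{Mar1}(ii), additive closure of each $\B_k$, nesting of the filtration), and the map comparison needed at the end of (b) is guaranteed by the naturality (in $E$) of the isomorphism $\C\otimes_{\B_j}\B_j(E,-)\cong\C(E,-)$, which you can also see directly via Yoneda since $E_{j-1},E_j\in\B_j$ and $\B_j$ is full.

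One small correction to your closing remark: the finiteness of the relevant direct sums does not come from finite presentability of $F$, which is what part (a) is proving rather than assuming. It comes from the definition of ${}_\C\Delta(j)^\amalg$ as \emph{finite} direct sums, which is built into the definition of $\mathcal F({}_\C\Delta)$ via the condition $F^{[j]}/F^{[j-1]}\in{}_\C\Delta(j)^\amalg$. This is a cosmetic issue and does not affect the argument, since the finiteness you need is present at every step.
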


{\sc Triangular Matrix Categories.}
Following Mitchell's philosophy \cite{Mitchell}, in \cite{LGOS1, LGOS2} the notion of triangular matrix categories is introduced in order to define the analogous of the triangular matrix algebras to the context of rings with several objects.
 \begin{defi} Given  $\mathcal T$ and $\mathcal U$ additive $K$-categories and a functor 
$M:\mathcal U\otimes_K \mathcal T^{op}\rightarrow \mathrm{Mod}\ K$, the triangular matrix category  $\Lambda=
\left (\begin{smallmatrix}
	\mathcal T& 0 \\ 
	M & \mathcal U
	\end{smallmatrix}\right)$
	is the additive $K$-category whose collection of objects are  the \emph{matrices}
	$\left(\begin{smallmatrix}
	 T& 0 \\ 
	M &  U
	\end{smallmatrix}\right )$, where $U$ and $T$ are objects in $\mathcal U$ and $\mathcal T$, respectively.

Let $X=\left(\begin{smallmatrix}
	 T& 0 \\ 
	M &  U
	\end{smallmatrix}\right )$, $X'=\left(\begin{smallmatrix}
	 T'& 0 \\ 
	M &  U'
	\end{smallmatrix}\right )$ and $X''=\left(\begin{smallmatrix}
	 T''& 0 \\ 
	M &  U''
	\end{smallmatrix}\right )$ be objects  in $\Lambda$. The set of morphisms from $X$ to $X'$ is
	$\Lambda(X,X')=\left ( \begin{smallmatrix}
	\mathcal T(T,T')& 0 \\ 
	M ((U',T))& \mathcal U(U,U')
	\end{smallmatrix}\right )$, where $\left ( \begin{smallmatrix}
	\mathcal T(T,T')& 0 \\ 
	M ((U',T))& \mathcal U(U,U')
	\end{smallmatrix}\right ):=\{\left ( \begin{smallmatrix}
	 f& 0 \\ 
	m &  g\end{smallmatrix}\right )$:$ f\in T(T,T'), g\in\mathcal U(U,U'), m\in M(T,U')\}$, and the composition 
	$\Lambda(X',X'')\times \Lambda (X,X')\rightarrow \Lambda(X,X'')$ is defined by
	\[
	\hspace{1.2cm}\left(\left(\begin{smallmatrix} f_2 & 0 \\ m_2 & g_2 \end{smallmatrix}\right), \left(\begin{smallmatrix} f_1 & 0 \\ m_1 & g_1
	\end{smallmatrix}\right)\right) \mapsto \left(\begin{smallmatrix}
	f_2 \circ f_1 & 0 \\ m_2 \bullet f_1+g_2 \bullet m_1 & g_2 \circ g_1
	\end{smallmatrix}\right)
	\]
	with $m_2 \bullet f_1:=M(1_{U''}\otimes f_1^{op})(m_2)$ and $g_2 \bullet m_1:=M(g_2\otimes1_{T})(m_1)$, where $M(1_{U''}\otimes f_1^{op}):M(U'',T')\rightarrow M(U'',T)$ and
	$M(g_2\otimes 1_T):M(U'\otimes T)\rightarrow M(U''\otimes T)$ are  morphism
in $\mathrm{Mod}\ K$.
\end{defi}

\section{Tensor product of path categories and triangular categories over path categories}

\bigskip 
In \cite[Lemma 1.3]{Leszczynski},  Z. Leszczy\'nski  proves that the tensor product of two path algebras is again a path algebra. Following the same arguments given by Leszczy\'nski, we see that the result can be extended to the context of path categories as defined by Ringel \cite{Rin2}. We give the proof of this result for the benefit of the reader. We will then use these results  in the  rest of this section to solve the problem of finding quotients of path categories isomorphic to the lower triangular matrix category 
$ \Lambda = \left(
\begin{smallmatrix}
K\mathcal R/\mathcal J&0\\
M&K \mathcal Q/\mathcal I
\end{smallmatrix}
 \right)$,
where $K\mathcal R/\mathcal J$ and $K\mathcal Q/\mathcal I$ are path categories modulo admissible ideals and $M$ is a functor from $K\mathcal Q/\mathcal I\otimes( K\mathcal R/\mathcal J)^{op}$ to the category  $\mathrm{mod}\ K$; see \cite{Oystein}.

\bigskip We start with the definition of a product of two quivers.

\begin{rk}
So all the  path categories mentioned in this work   satisfy the conditions of being $\mathrm{Hom}$-finite Krull-Schmidt $K$-categories, all the quivers to which we refer will be considered 
 strongly locally finite quivers.
\end{rk}

\begin{defi}
Given two quivers $Q=(Q_0,Q_1)$ and  $Q'=(Q_0',Q_1')$,   the  product  quiver  $(Q\times Q', (Q\times Q')_0, (Q\times Q')_1)$  is defined by
$$(Q\times Q')_0=Q_0\times Q_0'\text{ and }  (Q\times Q')_1=(Q_0\times Q_1')\cup (Q_1\times Q_0').$$
\end{defi}

Let $Q$ and $Q'$ be quivers, and consider their path algebras $KQ$ and $KQ'$. Assume that $I$  and $I'$ are admissible ideals in $KQ$ and  $KQ'$, respectively.   Let  $I\square I'$ be the ideal  in $K(Q\times Q')$ generated by $(Q_0\times I')\cup (I\times Q_0')$   and the set of  relations
 \begin{equation}\label{cuadro}
(p',\beta)(\alpha,q)-(\alpha,q')(p,\beta), \ \ \ \alpha\in Q_1, \beta\in  Q_1'
\end{equation}
\[
\begin{tikzcd}[cramped, row sep=2.6em, column sep=2.8em]
	(p,q) \arrow[r,"\tiny{(\alpha,q)}"] \arrow[d,"\tiny{(p,\beta)}"] & (p',q) \arrow[d,"\tiny{(p',\beta)}"] \\
	(p,q') \arrow[r,"\tiny{(\alpha,q')}"] & (p',q').
\end{tikzcd}
\]
Similarly,   denote by $K(\mathcal Q\times\mathcal Q')$ the path category of  $Q\times Q'$  and by  $\mathcal I\square \mathcal I'$ is the ideal in  $K(\mathcal Q\times\mathcal Q')$ generated by the ideal 
$I\square I'$. 
\begin{pro}
There exists an isomorphism of $K$-categories 
\[
\frac{K(\mathcal Q\times \mathcal Q')}{\mathcal I\ \square \  \mathcal  I'}\cong \frac{K\mathcal Q}{\mathcal I}\otimes_K \frac{K\mathcal Q'}{\mathcal I'}
\]
\end{pro}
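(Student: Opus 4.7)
The plan is to construct inverse $K$-linear additive functors between the two categories. First I would define a functor $\Phi:K(\mathcal Q\times \mathcal Q')\to (K\mathcal Q/\mathcal I)\otimes_K (K\mathcal Q'/\mathcal I')$ to be the identity on vertices $(p,q)$ and to send the two types of generating arrows by $\Phi(\alpha,q):=\bar\alpha\otimes \bar\epsilon_q$ and $\Phi(p,\beta):=\bar\epsilon_p\otimes \bar\beta$. Since $K(\mathcal Q\times \mathcal Q')$ is freely generated as an additive $K$-category by its arrows, this data extends uniquely to an additive $K$-functor.

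Next I would verify that $\Phi$ kills each generator of the ideal $\mathcal I\square\mathcal I'$. For the commuting-square generator \eqref{cuadro}, both $(p',\beta)(\alpha,q)$ and $(\alpha,q')(p,\beta)$ map under $\Phi$ to $\bar\alpha\otimes \bar\beta$ in the tensor product, so their difference vanishes. For a generator of the form $(\rho,q)\in I\times Q_0'$ with $\rho$ a relation in $I$, one has $\Phi(\rho,q)=\bar\rho\otimes \bar\epsilon_q=0$ since $\rho\in \mathcal I$; symmetrically for generators from $Q_0\times I'$. Consequently $\Phi$ factors through the quotient to give a $K$-functor $\bar\Phi$.

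For the inverse I would build $\bar\Psi$ using the universal property of the tensor product applied one hom-space at a time. For fixed objects $(p,q)$ and $(p',q')$, I would construct a $K$-bilinear pairing
\[
(K\mathcal Q/\mathcal I)(p,p')\times (K\mathcal Q'/\mathcal I')(q,q')\longrightarrow \bigl(K(\mathcal Q\times \mathcal Q')/\mathcal I\square \mathcal I'\bigr)\bigl((p,q),(p',q')\bigr)
\]
sending the classes of paths $\rho=\alpha_1\cdots\alpha_l$ and $\rho'=\beta_1\cdots\beta_m$ to the class of the concatenated lift built by first running $\rho$ along row $q$ via the arrows $(\alpha_i,q)$ and then running $\rho'$ along column $p'$ via the arrows $(p',\beta_j)$. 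Well-definedness in each slot follows because if $\rho\in \mathcal I$ then its row lift lies in the subideal generated by $I\times Q_0'$ and hence in $\mathcal I\square\mathcal I'$; bilinearity is immediate from the bilinearity of composition in the path category.

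The main obstacle is showing that the collection of hom-level maps $\bar\Psi_{(p,q),(p',q')}$ respects composition and thus assembles into a genuine $K$-functor $\bar\Psi$. This amounts to proving that when one composes two lifts produced by $\bar\Psi$, the forced interleaving of row and column arrows can be brought into the normal form (all row-arrows first, then all column-arrows) prescribed by $\bar\Psi$ itself, and this is achieved by repeated application of the commuting-square relation \eqref{cuadro} to slide every column-arrow past every row-arrow. Once $\bar\Psi$ is confirmed to be a functor, the identities $\bar\Phi\circ \bar\Psi=\mathrm{id}$ and $\bar\Psi\circ\bar\Phi=\mathrm{id}$ are verified directly on the spanning sets of basic tensors $\bar\rho\otimes\bar\rho'$ and on normal-form classes of paths in $\mathcal Q\times\mathcal Q'$, yielding the desired isomorphism.
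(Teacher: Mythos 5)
Your proposal is correct and follows the same underlying strategy as the paper: construct a functor in each direction, kill the generators of $\mathcal I\square\mathcal I'$ to descend one way, and use the universal property of the tensor product plus the commuting-square relations (\ref{cuadro}) to build the inverse. The paper organizes the argument in two stages — first proving $K(\mathcal Q\times\mathcal Q')/\mathcal I_C\cong K\mathcal Q\otimes_K K\mathcal Q'$ via a pair of quasi-inverse functors $F$ and $H$, and only afterwards passing to the quotients by $\mathcal I$, $\mathcal I'$ via a commutative square of projections — whereas you go directly to the quotient categories in one pass; and the paper invokes the universal property of $\otimes_K$ at the level of bifunctors on $\mathcal C\times\mathcal C'$ rather than hom-space by hom-space as you do, which packages the functoriality check slightly differently. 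Both routes rest on the same key observation, namely that the commuting-square relations are precisely what is needed to make the inverse assignment well-defined and compatible with composition (your ``normal form'' argument is the paper's claim that $\underline{H}$ preserves composition), so the two proofs are essentially equivalent.
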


\begin{proof}
First we construct the isomorphism in the case $I=0$, $I'=0$. Let $\mathcal I_C$ be the  ideal in $K(\mathcal Q\times \mathcal Q')$ generated  by the elements (\ref{cuadro}).  Set $\mathcal B=K(\mathcal Q\times\mathcal Q')$ and $\mathcal C= K\mathcal Q$, $\mathcal C'= K\mathcal Q'$. Let $(p,r),(q,s)\in (Q\times Q')_0$  consider the map
\begin{eqnarray*}
\underline{F'}& :& \mathcal B((p,r),(q,s))\rightarrow \mathcal C(p,q)\otimes_K \mathcal C'(r,s),\\
                  &&((p,r)|\gamma_1,\ldots, \gamma_l|(q,s))\mapsto \underline F'(\gamma_l)\cdots\ \underline F'(\gamma_1),
\end{eqnarray*}
by setting   $\underline F'(\alpha,q)=\alpha\otimes\epsilon_q$ if
$\gamma=(\alpha, q)\in Q_1\times Q_0'$; 
$\underline F'(p,\beta)=\epsilon_p \otimes\beta$, if
$\gamma=(p,\beta)\in Q_0\times Q_1'$ and 
$\underline{F}'((p,q))=\epsilon_p\otimes\epsilon_q$ if $\gamma=(p,q)  \in Q_0\times Q_0'$. Extending 
by linearity the map $\underline F'$ we get a full and dense functor
$\underline F: \mathcal B\rightarrow \mathcal C\otimes_K\mathcal C'$. Since $\mathcal I_C((p,r),(q,s))\subset \mathrm{Ker} (\underline F)$, there exists an epimorphism $F: \frac{\mathcal{ B}}{\mathcal I_C}((p,r),(q,s))\rightarrow C(p,q)\otimes_K \mathcal C(r,s)$ given by 
$ F(w+\mathcal I_C)=\underline {F}(w)$ for all path  $w$ from $(p,r)$ to $(q,s)$. The above allows us to obtain a full and dense functor $F:\frac{ \mathcal B}{\mathcal I _C}\rightarrow  C\otimes_K\mathcal C'$.

\bigskip

Consider the canonical functor 
 $\mathcal C \times C'\rightarrow \mathcal C \otimes_K C'$. 
 We define a functor $\underline{H}:\mathcal C\times\mathcal  C'\rightarrow\frac{ \mathcal B}{\mathcal I _C}$ by $\underline{H}\left ((p,r)\right )=(p,r)$
 and 
 \begin{eqnarray}\label{Hundeline}
 \underline{H}&:&\mathcal C(p,q)\times\mathcal C'(r,s)\rightarrow\frac{ \mathcal B}{\mathcal I _C}((p,r),(q,s));\\
                     &&(f,g)\mapsto((q,g) \circ (f,r))+\mathcal I_C((p,r),(q,s)).\notag
 \end{eqnarray}
   It follows from the  relations   (\ref{cuadro})  that  $\underline H$ preserves compositions of morphisms; moreover, $\underline H$ is full.  Thus,  by the universal property of tensor product, $\underline{H}$ induces a full functor
  $H: \mathcal C\otimes_K\mathcal C'\rightarrow  \frac{ \mathcal B}{\mathcal I _C}$  defined by 
  $H((p,r))=(p,r)$ and $H(f\otimes g)=\underline H(f,g)$. Observe that $H$  is quasi-inverse to $F$.
  
\bigskip

Let $(f,g)\in \mathcal C(p,q)\otimes\mathcal C'(r,s)$.  First, assume that $(f,g)\in \mathcal I(p,q)\otimes\mathcal C'(r,s)$. Then, by (\ref{Hundeline}), $H(f\otimes g)\in (\frac{\mathcal I\square \mathcal I'}{\mathcal I_C})((p,r),(q,s))$ since  $(f,r)$ is in the ideal of $\mathcal B$ generated by  $I\times Q_0'$, which is contained in $\mathcal I\square \mathcal I'$.  The same occurs if  $(f,g)\in \mathcal C(p,q)\otimes\mathcal I'(r,s)$.   It follows that $$H \left( (  \mathcal I\otimes\mathcal  C'+\mathcal C\otimes\mathcal I' )\  ((p,q),(r,s))\right )\subseteq \left(\frac{\mathcal I\square \mathcal I'}{\mathcal I_C}\right)((p,r),(q,s)).$$

It is clear that $F((\mathcal I\square \mathcal I'/\mathcal I_C)((p,r),(q,s)))\subset (  \mathcal I\otimes\mathcal  C'+\mathcal C\otimes\mathcal I')\ ((p,q),(r,s))$ since $\underline{F}((\mathcal I\square \mathcal I')((p,r),(q,s)))\subset (\mathcal I\otimes\mathcal  C'+\mathcal C\otimes\mathcal I') \ ((p,q),(r,s))$.
\bigskip

On the other hand, the quotient functors $\mathcal C\rightarrow \mathcal C/\mathcal I$,  $\mathcal C'\rightarrow \mathcal C'/\mathcal I'$ induce a full functor $\pi: \mathcal C\otimes_K \mathcal C'\rightarrow \mathcal C/\mathcal I\otimes_K\mathcal C'/\mathcal I'$, with $\mathrm{Ker}(\pi)=\mathcal I\otimes\mathcal  C'+\mathcal C\otimes\mathcal I'$. Moreover, for each pair $(p,r), (q,s)\in Q_0\times  Q_0'$ the quotient functor $\pi_{\mathcal B}: \mathcal B/\mathcal I_C\rightarrow\frac{  \mathcal B/\mathcal I_C}{\mathcal I\square \mathcal I'/\mathcal I_C}\cong \frac{  \mathcal B}{\mathcal I\square \mathcal I'}$ induces an isomorphism $\tilde{F} :  \mathcal C/\mathcal I  ((p,q))\otimes_K\mathcal C'/\mathcal I'  ((r,s))\rightarrow  \frac{ \mathcal B}{\mathcal I \square \mathcal I}((p,r),(q,s))$ for which the following diagram commutes:  
\[
\begin{tikzcd}[cramped, row sep=2.6em, column sep=2.8em]
	\mathcal{C}(p,q)\otimes_K \mathcal{C}'(r,s) \arrow[r,"\pi"] \arrow[d, shift left=.6ex, "H"] & \frac{\mathcal{C}}{\mathcal I}(p,q)\otimes_K \frac{\mathcal{C}'}{\mathcal I'}(r,s) \arrow[d,"\widetilde{F}"] \\
	\frac{\mathcal{B}}{\mathcal I_{C}}((p,r),(q,s)) \arrow[u, shift left=.8ex, "F"] \arrow[r,"\pi_{\mathcal{B}}"] & \frac{\mathcal{B}}{\mathcal I\square\mathcal  I'}((p,r),(q,s)).
\end{tikzcd}
\]

Thus, we have a faithful functor $\tilde F:  K\mathcal Q/\mathcal I \otimes_K K\mathcal Q'/\mathcal I'  \rightarrow \frac{  K(\mathcal Q\times\mathcal  Q')}{\mathcal I \square \mathcal I}$, which induces an isomorphism of categories.
\end{proof}

\begin{defi}
	 Let $Q$, $R$ be disjoint quivers. Consider their path  $K$-categories   ${K\mathcal Q}$ and
	 ${K\mathcal R}$ with two-sided ideals $\mathcal I \subset  K\mathcal Q$ and $\mathcal J \subset K\mathcal R$ generated by the sets of relations $\{\rho_u\}_{u\in \mathcal{U}}$ and $\{\sigma_v\}_{v\in \mathcal{V}}$.  Let
	 $M:{K\mathcal Q}/{\mathcal I}\otimes_K ({K\mathcal R}/{\mathcal J})^{op}\rightarrow \mathrm{mod} \ K$ be a functor as described below.
	\begin{itemize}
		\item[(1)] Define $\eta(-,-):Q_0\times R_0 \longrightarrow \mathbb{N}\cup \{0\}$ by   $\eta(i,j):=\mathrm{dim}_KM(i,j)$.
		\item[(2)]  For each pair $(i,j)\in Q_0\times R_0$, let $B(i,j)=\{ b_t^{(i,j)}\}_{0\le t\le \eta(i,j)}$ be a $K$-basis  for $M(i,j)$.  
		\item[(3)]  For each pair $(i,j)\in Q_0\times R_0$, let 
		 $\overrightarrow{B}(j,i)=\{\overrightarrow{b}_t^{(j,i)}\}_{0\le t \le \eta(i,j)}$ be a set  of arrows $\overrightarrow{b}_t^{(j,i)}:j \rightarrow i$.
		\item[(4)] Define \[\mathcal{B}:=\underset{(i,j)\in Q_0 \times R_0}{\cup}B(i,j) \  \  \text{ and   } \ \ \overrightarrow{\mathcal{B}}:=\underset{(j,i)\in R_0 \times Q_0}{\cup}\overrightarrow{B}(j,i).\] 
		 \item[(5)] The augmented quiver of $R$ and $Q$ by $\mathcal{B}$, which is denoted by $(R,\mathcal{B},Q)$, is the quiver with the following set of vertices and arrows :
	\begin{eqnarray*}
	(R,\mathcal{B},Q)_0 &:=& R_0 \cup Q_0,\\
(R,\mathcal{B},Q)_1 &:=& R_1 \cup Q_1 \cup \overrightarrow{\mathcal{B}}. 
	\end{eqnarray*}
	\end{itemize}	
		\end{defi}
		
\begin{rk}	\label{remarkquiver1}

\begin{itemize}
\item[(i)] Since $Q$  is a full subquiver of $(R, \mathcal{B}, Q)$, any relation on $Q$  can be seen as a relation on $(R, \mathcal{B}, Q)$, and  $K\mathcal Q$  is a full  subcategory of $K(\mathcal R, \mathcal{B}, \mathcal Q)$. The same for the subquiver  $R\subset (R, \mathcal{B}, Q).$
\item[(ii)]  We denote by $\mathcal I\cup \mathcal J $  the ideal of $K(\mathcal R, \mathcal{B}, \mathcal Q)$ generated by  the set of relations   $\{\rho_u\}_{u\in \mathcal{U}}\cup\{\sigma_v\}_{v\in \mathcal{V}}$ seen as relations on  $(R, \mathcal{B}, Q)$. 
 \item[(iii)] Since  $Q$ and $R$ are disjoint quivers, we naturally have additive fully faithful functors $G: \frac{K\mathcal Q}{\mathcal I}\rightarrow \frac{K(\mathcal R,\mathcal B, \mathcal Q)}{\mathcal I\cup \mathcal J}$  and $H: \frac{K\mathcal R}{\mathcal J}\rightarrow \frac{K(\mathcal R,\mathcal B, \mathcal Q)}{\mathcal I\cup \mathcal J}$. The functor $G$ is defined in objects by the inclusion map, $G(i)=i$, for all $i\in  Q_0$ and in morphisms by  $ G: \mathrm{Hom}_{\frac{K\mathcal Q}{\mathcal I}}(i,i') \longrightarrow \mathrm{Hom}_{\frac{K(\mathcal R,\mathcal{B},\mathcal Q)}{\mathcal I\cup \mathcal J}}(i,i')$ by  $[f]\mapsto [[f]]$, for all pair $i, i'\in Q_0$.  
 
  If there is no risk of confusion, we simply write $[f]$ instead $[[f]]$ to refer to the class of $f$ 
in  $\mathrm{Hom}_{\frac{K(\mathcal R,\mathcal{B},\mathcal Q)}{\mathcal I\cup \mathcal J}}(i,i')$. Analogously, the functor $H$ is defined.
 \end{itemize}
 \end{rk}	
\bigskip

In the rest of this section, we use the notation given in Remark \ref {remarkquiver1}.  Let $(i,j)\in Q_0\times R_0$. Define  a $K$-linear map, which is an isomorphism:
\[
\overrightarrow{(-)}:M(i,j) \xrightarrow{\cong} \mathrm{Hom}_{\frac{K(\mathcal R,\mathcal{B},\mathcal Q)}{\mathcal I\cup \mathcal J}}(j,i)
\]
induced by the map  $B(i,j) \rightarrow \mathrm{Hom}_{\frac{K(\mathcal R,\mathcal{B},\mathcal Q)}{\mathcal I\cup \mathcal J}}(j,i), \ b_t^{(i,j)} \hspace{.08cm} \mapsto \hspace{.2cm} \overrightarrow{b}_t^{(j,i)},  \ \  0\le t\le \eta(i,j).$

\begin{lem}\label{onlyAbelian}
Let $X_0=\left (\begin{smallmatrix}
j_0&0\\
M& i_0
\end{smallmatrix}\right )$, $X_1= \left(\begin{smallmatrix}
j_1&0\\
M& i_1
\end{smallmatrix}\right )\in\Lambda$, with  $(i_0,j_0), (i_1,j_1)\in Q_0\times R_0$. The map
$\mathrm{Hom}_\Lambda(X_0,X_1)\rightarrow  \mathrm{Hom}_{\frac{K(\mathcal R,\mathcal{B},\mathcal Q)}{\mathcal I\cup \mathcal J}}(j_0\oplus i_0,j_1\oplus i_1)$ given by
\begin{eqnarray*}
\begin{pmatrix}
Hom_{\frac{K\mathcal R}{\mathcal I}}(j_0,j_1)	&0\\
M(i_1,j_0)& Hom_{\frac{K\mathcal Q}{\mathcal J}}(i_0,i_1)
\end{pmatrix}\ni	  \begin{pmatrix}
[r]&0\\
m& [q]
\end{pmatrix}
\mapsto \begin{pmatrix}
H([r])&0\\
\overrightarrow{m}&G([q]) 
\end{pmatrix}
\end{eqnarray*}
is a morphism of abelian groups. 
\end{lem}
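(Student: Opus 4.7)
\medskip

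\noindent\textbf{Proof plan.} The plan is to verify that the assignment preserves the additive structure, which, once the matrix representation on both sides is made explicit, reduces to the $K$-linearity of the three constituent maps $G$, $H$, and $\overrightarrow{(-)}$. First, I would recall that in any additive $K$-category, a morphism between direct sums $A_1\oplus A_2\to B_1\oplus B_2$ is determined uniquely by its $2\times 2$ matrix of components $f_{ji}\in\Hom(A_i,B_j)$, and that the addition of such morphisms is performed componentwise. In the left-hand side the componentwise addition
\[
\begin{pmatrix} [r_1] & 0 \\ m_1 & [q_1] \end{pmatrix}+\begin{pmatrix} [r_2] & 0 \\ m_2 & [q_2] \end{pmatrix}=\begin{pmatrix} [r_1]+[r_2] & 0 \\ m_1+m_2 & [q_1]+[q_2] \end{pmatrix}
\]
is part of the very definition of the triangular matrix category.

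Next, I would justify the shape of the image matrix on the right, that is, the appearance of a zero in the upper-right corner. The augmented quiver $(R,\mathcal{B},Q)$ has arrow set $R_1\cup Q_1\cup\overrightarrow{\mathcal{B}}$, with all arrows in $\overrightarrow{\mathcal{B}}$ going from $R_0$ to $Q_0$ and none going from $Q_0$ to $R_0$; since $Q$ and $R$ are disjoint subquivers, there is no path from $i_0\in Q_0$ to $j_1\in R_0$ in $(R,\mathcal{B},Q)$. Hence $\Hom_{\frac{K(\mathcal R,\mathcal B,\mathcal Q)}{\mathcal I\cup \mathcal J}}(i_0,j_1)=0$, so every morphism $j_0\oplus i_0\to j_1\oplus i_1$ in the quotient path category is represented by a matrix
\[
\begin{pmatrix} \alpha & 0 \\ \beta & \gamma \end{pmatrix},\qquad \alpha\in\Hom(j_0,j_1),\ \beta\in\Hom(j_0,i_1),\ \gamma\in\Hom(i_0,i_1).
\]
By Remark~\ref{remarkquiver1}(iii), $H$ and $G$ land in $\Hom(j_0,j_1)$ and $\Hom(i_0,i_1)$ respectively, and $\overrightarrow{(-)}$ sends $M(i_1,j_0)$ into $\Hom(j_0,i_1)$, so the assignment is well defined.

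Finally, I would verify additivity by direct comparison: the image of the sum is
\[
\begin{pmatrix} H([r_1]+[r_2]) & 0 \\ \overrightarrow{m_1+m_2} & G([q_1]+[q_2]) \end{pmatrix},
\]
while the sum of the images is obtained by componentwise addition in the matrix representation of the additive category $\frac{K(\mathcal R,\mathcal B,\mathcal Q)}{\mathcal I\cup\mathcal J}$. Because $G$ and $H$ are $K$-linear functors (their action on morphisms is induced by the canonical passage to the quotient) and $\overrightarrow{(-)}$ is a $K$-linear isomorphism, one has $H([r_1]+[r_2])=H([r_1])+H([r_2])$, $G([q_1]+[q_2])=G([q_1])+G([q_2])$, and $\overrightarrow{m_1+m_2}=\overrightarrow{m_1}+\overrightarrow{m_2}$, so the two matrices coincide entry by entry, completing the proof. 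The only potential subtlety here is the vanishing of the upper-right Hom, which is the main obstacle and is handled by the quiver-combinatorial argument above; everything else is a routine unwinding of definitions.
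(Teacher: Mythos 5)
Your proof is correct and supplies exactly the details the paper leaves implicit behind its terse ``Straightforward.'' The three observations you isolate — componentwise addition of matrix morphisms on both sides, the vanishing of the upper-right Hom because every arrow in $\overrightarrow{\mathcal B}$ points from $R_0$ to $Q_0$ so no path in $(R,\mathcal B,Q)$ goes from $Q_0$ back to $R_0$, and $K$-linearity of $G$, $H$, and $\overrightarrow{(-)}$ — are precisely what the argument needs, so this is the same approach the authors have in mind.
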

\begin{proof}
Straightforward.
\end{proof}

Consider the the following set of relations in  $\frac{K(\mathcal R,\mathcal{B},\mathcal Q)}{\mathcal I\cup \mathcal J}$:
\begin{eqnarray*}
\mu(R,\mathcal{B},Q)&:=&\{ [q]\overrightarrow{b}-\overrightarrow{q\bullet b} \hspace{.1cm} | \hspace{.1cm} b \in B(i,j), q\in 
Q_1,   i=s(q),  \}_{ (i,j)\in Q_0\times R_0}\\
&& \cup \{ \overrightarrow{b}[r]-\overrightarrow{b\bullet r} \hspace{.1cm} | \hspace{.1cm} b \in B(i,j), r\in R_1,  j=t(r)  \}_{ (i,j)\in Q_0\times R_0},
\end{eqnarray*}
where $t(r)$ and $s(q)$ denote the target of $r$ and the source of $q$, respectively.


\begin{rk}
\begin{itemize}
\item[(i)]  Note that  $[q]\overrightarrow{b}$ and $\overrightarrow{b}[r]$ are compositions of paths in  $(R,\mathcal{B},Q)$. On the other hand, $\overrightarrow{q\bullet b}$  and  $\overrightarrow{b\bullet r}$ are linear combinations of arrows taken from $\overrightarrow{B}(j,i')$ and $\overrightarrow{B}(j',i)$, respectively. For example, if $ b \in B(i,j), [r]\in \frac{K\mathcal R}{\mathcal J}(j',j)$,  $\overrightarrow{b\bullet r}$ is obtained through the following composition:
$$ M(i,j) \xrightarrow{M(1_i\otimes r^{op})} M(i,j') \xrightarrow{\overrightarrow{(-)}} \mathrm{Hom}_{\frac{K(\mathcal R,\mathcal{B},\mathcal Q)}{\mathcal I\cup \mathcal J}}(j',i),$$
that is $ \overrightarrow{M(1_i\otimes r^{op})(b)}=\overrightarrow{b\bullet r} $.


\item[(ii)] Set $\mu=\langle\mu(R,\mathcal{B},Q)\rangle$. We have a projection
\[
\pi: \frac{K(\mathcal R,\mathcal{B},\mathcal Q)}{\mathcal I\cup \mathcal J} \longrightarrow \frac{K(\mathcal R,\mathcal{B},\mathcal Q)}{\mathcal I\cup \mathcal J \cup \mu} .
\]
Moreover, $\pi \circ G$ and  $\pi \circ H$  are fully faithful functors.
	\end{itemize}
\end{rk}

\begin{teo}
With the above notation, there exists an isomorphism of categories
\[
F: \Lambda = \left(\begin{matrix} \frac{K\mathcal R}{\mathcal J} & 0 \\ M & \frac{K\mathcal Q}{\mathcal I}
\end{matrix}\right) \longrightarrow \frac{K(\mathcal R,\mathcal{B},\mathcal Q)}{\mathcal I\cup \mathcal J \cup \mu}.
\]
\end{teo}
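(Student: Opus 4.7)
The plan is to build $F$ explicitly out of the map from Lemma~\ref{onlyAbelian} post-composed with the projection $\pi$. On objects, declare $F\left(\begin{smallmatrix} T & 0\\ M & U\end{smallmatrix}\right) := \pi H(T)\oplus \pi G(U)$, viewed as an object of $\frac{K(\mathcal R,\mathcal{B},\mathcal Q)}{\mathcal I\cup \mathcal J \cup \mu}$; since any object of the target category is a direct sum of vertices in $R_0\cup Q_0$ and these split canonically into the $R_0$-part and $Q_0$-part, this assignment is bijective on objects. On morphisms, reduce to the case of indecomposable source and target by additivity, and for indecomposable objects define $F$ by the formula of Lemma~\ref{onlyAbelian} composed with $\pi$.

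The first substantive step is verifying that $F$ is a functor, i.e.\ that it preserves the composition rule
$\left(\begin{smallmatrix} f_2 & 0\\ m_2 & g_2\end{smallmatrix}\right)\circ\left(\begin{smallmatrix} f_1 & 0\\ m_1 & g_1\end{smallmatrix}\right)=\left(\begin{smallmatrix} f_2f_1 & 0\\ m_2\bullet f_1+g_2\bullet m_1 & g_2g_1\end{smallmatrix}\right)$. The diagonal entries are handled by the fact that $\pi\circ G$ and $\pi\circ H$ are functors. The off-diagonal term requires showing that $\pi(\overrightarrow{m_2})\cdot \pi H([f_1])=\pi(\overrightarrow{m_2\bullet f_1})$ and $\pi G([g_2])\cdot \pi(\overrightarrow{m_1})=\pi(\overrightarrow{g_2\bullet m_1})$ in the quotient. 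For generators, these are precisely the defining relations of $\mu(R,\mathcal B,Q)$; the extension to arbitrary $[f_1],[g_2]$ proceeds by induction on the length of representing paths (using bilinearity of the bullet actions and the functoriality of $M$), so everything is built directly from the relations imposed.

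To finish, one shows $F$ is full and faithful. The key structural observation about the augmented quiver is that arrows in $\overrightarrow{\mathcal B}$ go only from $R_0$ to $Q_0$, so any path in $(R,\mathcal B,Q)$ either lies entirely in $R$, lies entirely in $Q$, or has the shape ($R$-path)$\cdot\,\overrightarrow{b}\cdot$($Q$-path) with exactly one crossing arrow. Modulo $\mu$, every such crossing path $G([q])\cdot\overrightarrow{b}\cdot H([r])$ can be rewritten as $\overrightarrow{q\bullet b\bullet r}\in\overrightarrow{M(t(q),s(r))}$, so every $\mathrm{Hom}(j,i)$ in the quotient is spanned by arrows of the form $\overrightarrow{m}$, which matches $M(i,j)$ by the isomorphism $\overrightarrow{(-)}$; fullness is then immediate, and the $R$-to-$R$ and $Q$-to-$Q$ hom-spaces are handled by Remark~\ref{remarkquiver1}(iii). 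The principal obstacle is faithfulness of $F$ on the $M$-component, i.e.\ proving that $\overrightarrow{(-)}$ remains injective after passing from $\mathcal I\cup\mathcal J$ to $\mathcal I\cup\mathcal J\cup\mu$. Here one argues by a normal-form analysis: the $\mu$-generators only permit rewriting crossing paths within the $\overrightarrow{M}$-layer (never producing new identifications inside $R$ or $Q$, since the generators preserve the crossing-arrow count), so an element of $M(i,j)$ whose image is zero in the quotient must already be zero by the $\mathcal I\cup\mathcal J$-level isomorphism. Combining the three verifications, $F$ is an isomorphism of $K$-categories.
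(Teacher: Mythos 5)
Your proof follows essentially the same route as the paper's: $F$ is built on morphisms by post-composing the map of Lemma~\ref{onlyAbelian} with the projection $\pi$, functoriality is reduced to the defining relations of $\mu$, fullness is established via the observation that every crossing path in $(R,\mathcal{B},Q)$ passes through exactly one arrow of $\overrightarrow{\mathcal{B}}$ and can be rewritten modulo $\mu$ to a single such arrow, and faithfulness amounts to showing no nonzero arrow $\overrightarrow{m}$ lies in $\mathcal{I}\cup\mathcal{J}\cup\mu$. The only place the paper is more explicit is the faithfulness step: it expands a putative $\overrightarrow{m}\in\mu$ as a $K$-linear combination of the $\mu$-generators and separates the length-one arrow terms from the length-$\geq 2$ path terms, using linear independence in $\frac{K(\mathcal{R},\mathcal{B},\mathcal{Q})}{\mathcal{I}\cup\mathcal{J}}$ to force all coefficients to vanish, whereas your appeal to a normal-form/rewriting heuristic leaves that linear-independence check implicit.
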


\begin{proof}
Let $X_0=\left ( \begin{smallmatrix}
j_0&0\\
M& i_0
\end{smallmatrix}\right )$, $X_1= \left (\begin{smallmatrix}
j_1&0\\
M& i_1
\end{smallmatrix}\right )$, $X_2= \left (\begin{smallmatrix}
j_2&0\\
M& i_2
\end{smallmatrix}\right )\in\Lambda$ with  $(i_0,j_0)$, $(i_1,j_1)$, $(i_2,j_2)\in Q_0\times R_0$. The map
$F: \mathrm{Hom}_\Lambda(X_0,X_1)\rightarrow  \mathrm{Hom}_{\frac{K(\mathcal R,\mathcal{B},\mathcal Q)}{\mathcal I\cup \mathcal J}}(j_0\oplus i_0,j_1\oplus i_1)$ is defined by composition of the following morphisms of abelian groups:
\[
\begin{diagram}
\node{ \mathrm{Hom}_\Lambda(X_0,X_1)}\arrow{e,t}{} \arrow{se,t}{F}
 \node{\mathrm{Hom}_{\frac{K(\mathcal R,\mathcal{B},\mathcal Q)}{\mathcal I\cup \mathcal J}}(j_0\oplus i_0,j_1\oplus i_1)}\arrow{s,r}{\pi}\\
\node{} \node{\mathrm{Hom}_{\frac{K(\mathcal R,\mathcal{B},\mathcal Q)}{\mathcal I\cup \mathcal J\cup\theta}}(j_0\oplus i_0,j_1\oplus i_1),}
\end{diagram}
\]
where the horizontal morphism is given in  Lemma \ref{onlyAbelian}.  

Let $ \left[\begin{smallmatrix} [r] & 0 \\ m & [q] \end{smallmatrix}\right]\in$ $\mathrm{Hom}_\Lambda(X_0,X_1)$, then $F\left (     \left[\begin{smallmatrix} [r] & 0 \\ m & [q] \end{smallmatrix}\right] \right ) =\left[\begin{smallmatrix}\pi([r]) & 0 \\ \pi(\overrightarrow{m}) &\pi ([q]) \end{smallmatrix}\right] $.  It is clear that $F$ is additive because it is a composition of additive morphisms. 

\bigskip Now we see that $F$  preserves composition. Indeed,  we have that
\begin{eqnarray*}
\mathrm{Hom}_\Lambda (X_0,X_1)&=& \begin{pmatrix} Hom_{\frac{K\mathcal R}{\mathcal J}}(j_0,j_1) & 0 \\ M(i_1,j_0) & Hom_{\frac{K\mathcal Q}{\mathcal I}}(i_0,i_1)
		\end{pmatrix},\\
\mathrm{Hom}_\Lambda (X_1,X_2)&=& \begin{pmatrix} Hom_{\frac{K\mathcal R}{\mathcal J}}(j_1,j_2) & 0 \\ M(i_2,j_1) & Hom_{\frac{K\mathcal Q}{\mathcal I}}(i_1,i_2)
		\end{pmatrix}.		
\end{eqnarray*}

Let  $\left[\begin{smallmatrix} [r_1] & 0 \\ m_1 & [q_1] \end{smallmatrix}\right]  \in  \mathrm{Hom}_\Lambda (X_0,X_1)$ and  $\left[\begin{smallmatrix} [r_2] & 0 \\ m_2 & [q_2] \end{smallmatrix}\right] \in  \mathrm{Hom}_\Lambda (X_1,X_2)$. Then
\begin{eqnarray*}
F\left(\left[\begin{matrix} [r_2] & 0 \\ m_2 & [q_2] \end{matrix}\right] \circ \left[\begin{matrix} [r_1] & 0 \\ m_1 & [q_1]
\end{matrix}\right]\right) &=&  \left[\begin{matrix}	\pi\circ H ([r_2 r_1]) & 0 \\ \pi(\overrightarrow{m_2 \bullet r_1+q_2 \bullet m_1}) & \pi\circ G([q_2 q_1]) \end{matrix}\right] \\
		&=& \left[\begin{matrix}	\pi\circ H([r_2 r_1]) & 0 \\ \pi(\overrightarrow{m_2 \bullet r_1}+\overrightarrow{q_2 \bullet m_1}) & \pi\circ G([q_2 q_1]) \end{matrix}\right].
\end{eqnarray*}
On the other hand, we have 
\begin{eqnarray*}
\small F\left(\left[\begin{matrix} [r_2] & 0 \\ m_2 & [q_2] \end{matrix}\right]\right) \circ F\left(\left[\begin{matrix} [r_1] & 0 \\ m_1 & [q_1]	\end{matrix}\right]\right) 
		&=& \left[\begin{matrix}	\pi\circ H([r_2 r_1]) & 0 \\ \pi(\overrightarrow{m_2} [r_1]+ [q_2]\overrightarrow{m_1}) & \pi\circ G([q_2 q_1]) \end{matrix}\right] .
\end{eqnarray*}
After writing $m_1$ and $m_2$  in terms of the elements  of $B(i_1,j_0)$ and $B(i_2,j_1)$, respectively, we see that $\overrightarrow{m_2} [r_1]-\overrightarrow{m_2 \bullet r_1}$ and $[q_2]\overrightarrow{m_1} -\overrightarrow{q_2\bullet m_1}$ lie in
$\mu$; therefore,  $\pi(\overrightarrow{m_2} [r_1])=\pi(\overrightarrow{m_2 \bullet r_1})$ and $\pi([q_2]\overrightarrow{m_1} )=\pi(\overrightarrow{q_2\bullet m_1})$.
 
 \bigskip
 
The functor $F$ is fully  faithful.  Let $\left[\begin{smallmatrix} [r] & 0 \\ m & [q] \end{smallmatrix}\right] \in  \mathrm{Hom}_\Lambda (X_0,X_1)$ such that $F\left(\left[\begin{smallmatrix} [r] & 0 \\ m & [q]	\end{smallmatrix}\right] \right) = \left[\begin{smallmatrix} \pi([r]) & 0 \\ \pi(\overrightarrow{m}) & \pi([q]) \end{smallmatrix}\right] = 0$. It follows that $\pi\circ H([r])=0$ and   $\pi \circ G([q])=0$ imply $[r]=0$ and $[q]=0$ since  $\pi\circ H$ and  $\pi \circ G$ are fully faithful. After writing  $m$  in terms of the basis vectors in $B(i_1,j_0)$, we have $m=\sum_{t=1}^{\eta(i_1,j_0)} \lambda_t b_t^{(i_1,j_0)}$, and $\overrightarrow{m}=\sum_{t=1}^{\eta(i_1,j_0)} \lambda_t \overrightarrow{b}_t^{(j_0,i_1)}$. Thus $\pi(\overrightarrow{m})=0$ implies that  $\overrightarrow{m}\in \mu$, and it can be written as 
 \[
		\overrightarrow{m}=\sum_{u} \alpha_u\left([q_u]\overrightarrow{b}_u - \overrightarrow{q_u \bullet b_u}\right) + \sum_{v} \beta_v\left(\overrightarrow{b}_v[r_v] - \overrightarrow{b_v \bullet r_v}\right),
		\]
		with $\alpha_u, \beta_v\in K$.  Since the left side of the above equation is a linear combination of arrows and $r:=\sum_{u} \alpha_u\left([q_u]\overrightarrow{b}_u \right )+ \sum_{v} \beta_v\left(\overrightarrow{b}_v[r_v] \right)$  is a linear combination of path of length at least two, which are linearly independent in   $\mathrm{Hom}_{\frac{K(\mathcal R,\mathcal{B},\mathcal Q)}{\mathcal I\cup \mathcal J}}(j_0\oplus i_0,j_1\oplus i_1)$    because they lie not in $I\cup J$, we must have  $r=0$ and  $\alpha_u=\beta_v=0$, which implies $\overrightarrow {m}=0$, and finally $\lambda_t=0$, for $ 1\le t \le \eta(i_1,j_0)$, since $\overrightarrow{(-) }:  M(i_1,j_0)\rightarrow   \mathrm{Hom}_{\frac{K(\mathcal R,\mathcal{B},\mathcal Q)}{\mathcal I\cup \mathcal J}}(j_0, i_1)$ is an isomorphism.
		\bigskip
			
On the other hand, since $\pi\circ F$ and $\pi\circ G$ are full functors, it is sufficient to show that for every path $\gamma:j_0 \longrightarrow i_1$ in  $\mathrm{Hom}_{\frac{K(\mathcal R,\mathcal{B},\mathcal Q)}{\mathcal I\cup \mathcal J \cup \mu}}(j_0,i_1)$ there exists an element
$m\in M(i_1,j_0)$ such that  $F\left ((\begin{smallmatrix} 0 & 0 \\ m & 0
		\end{smallmatrix}\right)) =\gamma$. But $\gamma$ can be written as 
		\[
		  \gamma=\overrightarrow{b}^{(j_1,i_1)}[r] \hspace{1cm} \text{or} \hspace{1cm} \gamma=[q]\overrightarrow{b}^{(j_0,i_0)},
		  \]
		  as is shown in the picture
		\[
		\xymatrix{i_1 \hspace{.1cm} & \hspace{.1cm} i_0 \hspace{.1cm} \ar[l]_{[q]} &  & \\	& & \hspace{.1cm} j_1 \hspace{.1cm} \ar[llu]^{\overrightarrow{b}^{(j_1,i_1)}} & \hspace{.1cm} j_0 \hspace{.1cm} .\ar[l]^{[r]} \ar[llu]_{\overrightarrow{b}^{(j_0,i_0)}} \ar[lllu]|{\gamma_t}} 
		\]
If we set $m=b^{(i_1,j_1)}\bullet r$ \hspace{.1cm} or  \hspace{.1cm} $m=q \bullet b^{(i_0,j_0)}$, we get what we desire. 		
\bigskip

		
		
		 	
		 The last assertion follows from the fact that $F$ is an isomorphism on  objects. 
\end{proof}
	
\begin{ex}\label{ex1}
Consider the quivers   
	
$$
Q: \ \begin{tikzcd}[cramped, sep=normal]
1'  & 2' \arrow[r,"\tiny{\gamma_2}"] \arrow[l,"\gamma_1"']& 3' & 4' \arrow[r,"\gamma_4"] \arrow[l,"\gamma_3"'] & 5' \arrow[r,phantom,"\cdots"] &  \hspace{.01cm}   
\end{tikzcd}
$$
and  
 \[
R:\ 
\begin{tikzcd}[cramped, sep=normal]
	1  \arrow[r, bend left, "\alpha_1", pos=0.46] & 2 \arrow[r, bend left, "\alpha_2"] \arrow[l, bend left, "\beta_1"] & 3 \arrow[r, bend left, pos=0.44, "\alpha_3"] \arrow[l, bend left, "\beta_2"] & \arrow[l, bend left, pos=0.56, "\beta_3"] & \arrow[l,phantom,"\cdots"]    
\end{tikzcd}
\]
with the set of relations 
$$J=\{\beta_1\alpha_1 \ \text{and} \ \alpha_{t+1}\alpha_t, \beta_{t}\beta_{t+1}, \alpha_t\beta_t-\beta_{t+1}\alpha_{t+1}, t\ge 1\}.$$
 Let $K\mathcal Q$ and  $K\mathcal R/\mathcal J$ be their respective path categories. Thus, by any functor $M: K\mathcal Q\otimes K\mathcal R^{op}\rightarrow \mathrm{mod}\ K$ can be identified with a functor  $M:K(\mathcal Q\times \mathcal R^{op})/0\square \mathcal J\rightarrow \mathrm{mod}\ K$, where $0\square\mathcal J$ is generated by the sets of  relations $Q_0\times \mathcal J$ and 
$\{(\gamma,t(\alpha))(s(\gamma),\alpha)-(t(\gamma),\alpha)(\gamma,s(\alpha)), (\gamma,t(\beta))(s(\gamma),\beta)-(t(\gamma),\beta)(\gamma,s(\beta)): \gamma\in Q_1, \alpha, \beta\in R_1\}$. In this example, we consider a representation on the right below  that can be seen as a functor $M: K\mathcal Q\otimes K\mathcal R^{op}\rightarrow \mathrm{mod} \ K$. \vspace{-.6cm}
\[
\small\begin{tikzcd}[cramped, row sep=3em, column sep=4.5em]
	\hspace{.01cm} & \hspace{.01cm} &  \\
	(1',3) \arrow[d, bend left=-22, "1\otimes \alpha_2^{op}"'] \arrow[u,phantom,"\vdots"] & (2',3) \arrow[d, bend left=-22, "1\otimes \alpha_2^{op}"'] \arrow[u,phantom,"\vdots"] \arrow[l,"\gamma_1\otimes1"'] \arrow[r,"\gamma_2\otimes1"] & \cdots\\
	(1',2)  \arrow[d, bend left=-22, "1\otimes \alpha_1^{op}"'] \arrow[u, bend left=-22, "1\otimes \beta_2^{op}"'] & (2',2) \arrow[d, bend left=-22, "1\otimes \alpha_1^{op}"'] \arrow[u, bend left=-22, "1\otimes \beta_2^{op}"'] \arrow[l,"\gamma_1\otimes1"'] \arrow[r,"\gamma_2\otimes1"] & \cdots\\
	(1',1) \arrow[u, bend left=-22, "1\otimes \beta_1^{op}"'] & (2',1) \arrow[u, bend left=-22, "1\otimes \beta_1^{op}"'] \arrow[l,"\gamma_1\otimes1"'] \arrow[r,"\gamma_2\otimes1"] & \cdots\\
\end{tikzcd} \hspace{.5cm}
\begin{tikzcd}[cramped, row sep=3em, column sep=3.5em]
	\hspace{.01cm} & \hspace{.01cm} & \hspace{.01cm} \\
	0 \arrow[d, bend left=-22, ""'] \arrow[u,phantom,"\vdots"] & 0 \arrow[d, bend left=-22, ""'] \arrow[u,phantom,"\vdots"] \arrow[l] \arrow[r] & 0 \arrow[u,phantom,shift left=1.4ex, "\vdots"] \arrow[d, shift right=1.4ex, bend left=-22, ""'] \cdots\\
	K \arrow[d, bend left=-22, "\tiny{\begin{pmatrix} 0 \\ 1 \end{pmatrix}}"'] \arrow[u, bend left=-22, ""'] & 0 \arrow[d, bend left=-22, ""']  \arrow[u, bend left=-22, ""'] \arrow[l] \arrow[r] & 0 \arrow[d, shift right=1.4ex, bend left=-22, ""']  \arrow[u, shift left=1.4ex, bend left=-22, ""']  \cdots\\
	K^2 \arrow[u, bend left=-22, "\tiny{\begin{pmatrix} 1 & 0 \end{pmatrix}}"'] & 0 \arrow[u, bend left=-22, ""'] \arrow[l] \arrow[r] & 0  \arrow[u, shift left=1.4ex, bend left=-22, ""']  \cdots\\
\end{tikzcd}\vspace{-.6cm}
\]
We see that $M(1',1)=K^2=\langle \varphi, \psi\rangle$, $M(1',2)=K=\langle \theta \rangle$, where
$\varphi =(1\ \ 0)$, $\psi=(0\ \ 1)$ and $\theta=1$. Thus, 
$\overrightarrow{\mathcal B}=\{\overrightarrow{\varphi}, \overrightarrow{\psi}:1\rightarrow 1', \overrightarrow{\theta}: 2\rightarrow 1'\}$ and 
\[
(R,\mathcal{B},Q):
\begin{tikzcd}[cramped, sep=normal]
	   1'  & 2' \arrow[r,"\tiny{\gamma_2}"] \arrow[l,"\gamma_1"']& 3' & 4' \arrow[r,"\gamma_4"] \arrow[l,"\gamma_3"'] & 5' \arrow[r,phantom,"\cdots"] &  \hspace{.01cm}\\
	  1 \arrow[u, shift left=.7ex ,"\varphi"] \arrow[u, shift left=-.6ex ,"\psi"'] \arrow[r, bend left, "\alpha_1", pos=0.46] & 2 \arrow[ul,"\theta"'] \arrow[r, bend left, "\alpha_2"] \arrow[l, bend left, "\beta_1"] & 3 \arrow[r, bend left, "\alpha_3"] \arrow[l, bend left, "\beta_2"] & 4 \arrow[r, bend left, "\alpha_4"] \arrow[l, bend left, "\beta_3"] & 5  \arrow[l, bend left, "\beta_4"] \arrow[r,phantom,"\cdots"] &  \hspace{.01cm} 
\end{tikzcd}
\]
On the other hand, $M(1\otimes\alpha_1^{op})=\begin{pmatrix}
0\\
1
\end{pmatrix}$, $M(1\otimes\beta_1^{op})=\begin{pmatrix}
1&0
\end{pmatrix}$ and $M(1\otimes\alpha_1^{op})(\theta)=\psi$, $M(1\otimes\beta_1^{op})(\psi)=0$, 
$M(1\otimes\beta_1^{op})(\varphi)=\theta$. Thus, 
$\mu=\{\overrightarrow{\psi}\overrightarrow{\beta}, \overrightarrow{\varphi}\overrightarrow{\beta}-\overrightarrow{\theta}, \overrightarrow{\theta}\overrightarrow{\alpha}-\overrightarrow{\psi}\}$. In this way
\[
\Lambda =  \left( \begin{smallmatrix}
 K\mathcal Q&0\\
 M&\frac{K\mathcal R}{\mathcal J}
\end{smallmatrix}\right )\cong \frac{K(\mathcal R,\mathcal{B},\mathcal Q)}{\mathcal J\cup \mathbf{\mu}}.\
\]

Although  $\mathcal J\cup \mathbf{\mu}$ is not an admissible ideal in $K(\mathcal R,\mathcal{B},\mathcal Q)$ , we can eliminate some arrows in $(R,\mathcal{B},Q)$  and obtain a quiver $Q'$  so that $\Lambda$  is isomorphic to a  path category  $K\mathcal Q'$ modulo an admissible ideal; see \cite{Oystein}.

Consider the quiver
\[
\small \begin{tikzcd}[cramped, sep=normal]
   Q': \ \ \ \arrow[r,phantom,"\cdots"] &   \arrow[r,"\gamma_3"'] & 3' & 2' \arrow[l,"\tiny{\gamma_2}"] \arrow[r,"\gamma_1"']  & 1' &
	1 \arrow[l,"\varphi"]\arrow[r, bend left, "\alpha_1", pos=0.46] & 2 \arrow[ll, bend right=50,"\theta"'] \arrow[r, bend left, "\alpha_2"] \arrow[l, bend left, "\beta_1"] & 3 \arrow[r, bend left, "\alpha_3"] \arrow[l, bend left, "\beta_2"] & \arrow[l, bend left, "\beta_3"] & \arrow[l,phantom,"\cdots"],   
\end{tikzcd}
\]
and let $\mu'=\{{\theta} {\alpha_1}{\beta_1}, {\theta}-{\varphi}{\beta_1}\}$ be a set of paths in $Q'$.

Therefore, the correspondence $[\psi]=[\theta\alpha_1]\leftrightarrow[\theta\alpha_1], [\beta]\leftrightarrow [\beta], [\alpha]\leftrightarrow [\alpha]$, $ [\gamma]\leftrightarrow [\gamma]$  induces an equivalence
$\frac{K(\mathcal R,\mathcal{B},\mathcal Q)}{\mathcal J\cup \mathbf{\mu}}\cong \frac{K\mathcal Q'}{\mu'\cup \mathcal J}$. Again if we consider the quiver,
\[
\begin{tikzcd}[cramped, sep=normal]
  Q'': \ \ \   \arrow[r,phantom,"\cdots"] &   \arrow[r,"\gamma_3"'] & 3' & 2' \arrow[l,"\tiny{\gamma_2}"] \arrow[r,"\gamma_1"']  & 1' &
	1 \arrow[l,"\varphi"]\arrow[r, bend left, "\alpha_1", pos=0.4] &  \arrow[r, bend left, "\alpha_2"] \arrow[l, bend left, "\beta_1"] & 3 \arrow[r, bend left, "\alpha_3"] \arrow[l, bend left, "\beta_2", pos=0.45] & \arrow[l, bend left, "\beta_3"] & \arrow[l,phantom,"\cdots"]   
\end{tikzcd}
\]
then the correspondence $[\theta]=[\varphi\beta_1]\leftrightarrow[\varphi\beta_1], [\beta]\leftrightarrow [\beta], [\alpha]\leftrightarrow [\alpha]$, $ [\gamma]\leftrightarrow [\gamma]$  induces an equivalence
$K\mathcal Q'/\mu'\cup \mathcal J\cong K\mathcal Q''/\mathcal J$.  In this way $\Lambda\cong K\mathcal Q''/\mathcal J$.
\end{ex}

\section{Triangular Matrix Categories over quasi-hereditary categories}

Let  $\mathcal{U}$ and $\mathcal{T}$ be $\mathrm{Hom}$-finite Krull-Schmidt quasi-hereditary $K$-categories with respect to filtrations $\{\mathcal U_j\}_{0\le j\le n}$ and  $\{\mathcal T_j\}_{j\ge 0}$, respectively, consisting of full additively closed  subcategories. Assume that  the $\mathcal U$-module $M_T=M(-,T)$ lies in $\mathcal  F(_\mathcal U\Delta)$ for all $T\in\mathcal T$; therefore $M_T$ is finitely presented  since the filtration of $\mathcal U$ is finite; see \cite[Lemma 3.18]{Martin}. Thus,  $\Lambda = \left(\begin{smallmatrix} \mathcal{T} & 0 \\ M & \mathcal{U} \end{smallmatrix}\right)$  is  a  $\mathrm{Hom}$-finite Krull-Schmidt $K$-category; see \cite[Proposition 6.9]{LGOS1}.   
\bigskip 

Consider the filtration of $\Lambda $ into  subcategories $\{\Lambda_j\}_{j\ge 0}$ given by 
\begin{eqnarray}\label{Filt1}
\Lambda_0&=&\left(\begin{smallmatrix} 0 & 0 \\ M & 0 \end{smallmatrix}\right);\notag\\
\Lambda_j&=&\left(\begin{smallmatrix} 0 & 0 \\ M & \mathcal{U}_j \end{smallmatrix}\right):=\left\{\left( \begin{smallmatrix} 0 & 0 \\ M & U \end{smallmatrix} \right): U \in \mathcal{U}_j \right\}, \text{ if $1\le j\le n$};\\
\Lambda_{n+j}&=&\left(\begin{smallmatrix} \mathcal T_{j} & 0 \\ M & \mathcal{U} \end{smallmatrix}\right)=\left\{ \left(\begin{smallmatrix} T & 0 \\ M & U \end{smallmatrix}\right) :T\in\mathcal T_j,  U \in \mathcal{U} \right\}, \text{ if $j\ge 1$}\notag.
\end{eqnarray} 

It is clear that $\Lambda_j\subseteq\Lambda $ is an additive full subcategory for all $j\ge 0$.  Moreover, if we define 
\begin{eqnarray*}
\left(\begin{smallmatrix} 0 & 0 \\ M &\mathrm{Ind}\   \mathcal{U}_j \end{smallmatrix}\right)&:=&\left\{ \left(\begin{smallmatrix} 0 & 0 \\ M & U \end{smallmatrix}\right) : U \in \mathrm{Ind} \ \mathcal{U}_j \right\}, \text{ if $1\le j\le n$, and},\\
 \left(\begin{smallmatrix} \mathrm{Ind} \ \mathcal T_{j} & 0 \\ M & 0 \end{smallmatrix}\right)&:=&\left\{ \left(\begin{smallmatrix} T & 0 \\ M & 0 \end{smallmatrix}\right) :T\in\mathcal T_j \right\}, \text{if $j\ge 1$ }.
\end{eqnarray*} 
It follows that 
\begin{eqnarray*}
\mathrm{Ind}\ \Lambda_j&=& \left(\begin{smallmatrix} 0 & 0 \\ M &\mathrm{Ind}\   \mathcal{U}_j \end{smallmatrix}\right), \text{ if $1\le j\le n$, and,}\\ \mathrm{Ind} \ \Lambda_{n+j}&=& \left(\begin{smallmatrix} 0 & 0 \\ M & \mathrm{Ind} \ \mathcal{U}=\mathrm{Ind}\ \mathcal{U}_n \end{smallmatrix} \right) \cup \left(\begin{smallmatrix} \mathrm{Ind} \ \mathcal T_{j} & 0 \\ M & 0 \end{smallmatrix}\right), \text{ if $j\ge 1$}.
\end{eqnarray*}

In this way, we have that $\Lambda_j$  , for $j\ge 0$, is additively closed. Moreover, 
\begin{eqnarray*}
\mathrm{Ind}\  \Lambda_{j } - \mathrm{Ind} \ \Lambda_{j-1}=
\begin{cases}
\left\{ \left(\begin{smallmatrix} 0 & 0 \\ M &U\end{smallmatrix}\right) : U\in\mathrm{Ind} \ \mathcal U_j-
\mathrm{Ind}\  \mathcal U_{j-1}\right \}, \text { if $1\le j\le n$}, \\
{}\\
\left \{ \left(\begin{smallmatrix} T & 0 \\ M &0\end{smallmatrix}\right) : T\in\mathrm{Ind} \ \mathcal T_{j-n}-
\mathrm{Ind}\  \mathcal T_{j-n-1}\right \}, \text { if $j>n$}.
\end{cases}
\end{eqnarray*}
 

One of the main results of this section is the following; see \cite[Theorem 3.1]{Zhu}.

\begin{teo}\label{quasiprinc}
Let  $\mathcal U$ and $\mathcal T$ be $\mathrm{Hom}$-finite Krull-Schmidt quasi-hereditary categories with respect  to filtrations $\{\mathcal U_j\}_{0\le j\le n}$,  $\{\mathcal T_i\}_{j\ge 0}$ of $\mathcal U$ and $\mathcal T$, respectively,  consisting of additively  closed subcategories.  Assume  that $M_T\in\mathcal F(_{\mathcal U}\Delta)$ for all $T\in\mathcal T$. Then $\Lambda= \begin{pmatrix} \mathcal T & 0 \\ M & \mathcal{U} \end{pmatrix}$ is quasi-hereditary with respect to the filtration  $\{\Lambda_j\}_{j\ge 0 }$ given in (\ref{Filt1}).
\end{teo}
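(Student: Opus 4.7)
The plan is to apply Theorem \ref{Mar1} to the filtration $\{\Lambda_j\}_{j\ge 0}$ described in (\ref{Filt1}): it is exhaustive and additively closed by construction, and the newly added indecomposables at step $j$ are $\left(\begin{smallmatrix}0 & 0\\M & U\end{smallmatrix}\right)$ with $U\in\mathrm{Ind}\,\mathcal{U}_j-\mathrm{Ind}\,\mathcal{U}_{j-1}$ when $1\le j\le n$, and $\left(\begin{smallmatrix}T & 0\\M & 0\end{smallmatrix}\right)$ with $T\in\mathrm{Ind}\,\mathcal{T}_{j-n}-\mathrm{Ind}\,\mathcal{T}_{j-n-1}$ when $j>n$. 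Condition (i) of Theorem \ref{Mar1} reduces cleanly to the corresponding condition for $\mathcal{U}$ or $\mathcal{T}$: between two indecomposables of the same zone the $\Lambda$-Hom space collapses to the $\mathcal{U}$- or $\mathcal{T}$-Hom space, so the radical in $\Lambda$ agrees with the radical in $\mathcal{U}$ or $\mathcal{T}$ and, by hypothesis, with $I_{\mathcal{U}_{j-1}}$ or $I_{\mathcal{T}_{j-n-1}}$; a short factorization inspection identifies these with $I_{\Lambda_{j-1}}(E,E')$, since factorizations through objects of the opposite zone contribute zero on such Hom spaces.

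For condition (ii), fix $X=\left(\begin{smallmatrix}T & 0\\M & U\end{smallmatrix}\right)$ and split on the range of $j$. When $1\le j\le n$, a direct use of the composition rule identifies $I_{\Lambda_j}(X,-)$ at $Y=\left(\begin{smallmatrix}T' & 0\\M & U'\end{smallmatrix}\right)$ with the matrices $\left(\begin{smallmatrix}0 & 0\\m & g\end{smallmatrix}\right)$ such that $g\in I_{\mathcal{U}_j}(U,U')$ and $m\in M_T^{[j]}(U')$. I take $E_j=\left(\begin{smallmatrix}0 & 0\\M & U_j^M\oplus U_j\end{smallmatrix}\right)\in\Lambda_j$ and $E_{j-1}=\left(\begin{smallmatrix}0 & 0\\M & U_{j-1}^M\oplus U_{j-1}\end{smallmatrix}\right)\in\Lambda_{j-1}$, where $\mathcal{U}(U_{j-1}^M,-)\to\mathcal{U}(U_j^M,-)\to M_T^{[j]}\to 0$ comes from Lemma \ref{lem5}(a) and $\mathcal{U}(U_{j-1},-)\to\mathcal{U}(U_j,-)\to I_{\mathcal{U}_j}(U,-)\to 0$ from the quasi-heredity of $\mathcal{U}$. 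Defining $X\to E_j$ with the $M_T^{[j]}$-generator on the $U_j^M$-summand and the $I_{\mathcal{U}_j}(U,-)$-generator on the $U_j$-summand decouples the $M$- and $\mathcal{U}$-parts, and the desired exact sequence $\Lambda(E_{j-1},-)\to\Lambda(E_j,-)\to I_{\Lambda_j}(X,-)\to 0$ falls out.

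For $j=n+k$ with $k\ge 1$, I combine the presentation $\mathcal{T}(T_{k-1},-)\to\mathcal{T}(T_k,-)\to I_{\mathcal{T}_k}(T,-)\to 0$ coming from the quasi-heredity of $\mathcal{T}$ with a finite projective cover $\mathcal{U}(U_0,-)\twoheadrightarrow M_T$ in $\mathrm{mod}\,\mathcal{U}$, which exists because $M_T\in\mathcal{F}({}_{\mathcal{U}}\Delta)$ is finitely presented by the finiteness of the filtration on $\mathcal{U}$ \cite[Lemma 3.18]{Martin}. I take $E_{n+k}=\left(\begin{smallmatrix}T_k & 0\\M & U\oplus U_0\end{smallmatrix}\right)\in\Lambda_{n+k}$ and define $X\to E_{n+k}$ by placing $t_k\colon T\to T_k$ in the $\mathcal{T}$-slot, $1_U$ and $0$ on the $\mathcal{U}$-summands, and $0$ together with the generator $m_T\in M(U_0,T)$ on the $M$-summands; the image of the induced map $\Lambda(E_{n+k},-)\to\Lambda(X,-)$ then hits exactly $I_{\Lambda_{n+k}}(X,-)$. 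For $E_{n+k-1}\in\Lambda_{n+k-1}$, I will take $\left(\begin{smallmatrix}T_{k-1} & 0\\M & U_K\end{smallmatrix}\right)$, where $U_K$ is chosen to cover simultaneously the $\mathcal{T}$-kernel of $t_k^{\ast}$ and the $\mathcal{U}$-module kernel of $M_{T_k}\oplus\mathcal{U}(U_0,-)\to M_T$, $(m,g)\mapsto m\bullet t_k+g\bullet m_T$. The main obstacle is precisely this coupling: the relation $m\bullet t_k+g\bullet m_T=0$ entangles the $M(-,T_k)$- and $\mathcal{U}(U_0,-)$-components, and the morphism $E_{n+k}\to E_{n+k-1}$ has to be engineered so that its induced map on representables reaches exactly this coupled kernel alongside the $\mathcal{T}$-relation; finite presentation of both $M_T$ and $M_{T_k}$ (both in $\mathcal{F}({}_{\mathcal{U}}\Delta)$) is what ensures this kernel is finitely generated, so that a suitable $U_K\in\mathcal{U}$ exists and makes the construction go through.
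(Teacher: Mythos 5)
Your treatment of condition (i), and of condition (ii) in the range $1\le j\le n$, is essentially the paper's (Propositions \ref{onequasi} and the first half of \ref{twoquasi}): in that range $I_{\Lambda_j}^{(1)}=0$ so the two pieces decouple and a direct sum of presentations suffices, exactly as you say.

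The genuine gap is in the case $j>n$, and you have correctly identified where it lives: the kernel of
\[
\Lambda\left(\left(\begin{smallmatrix}T_{j-n}&0\\M&U_0\oplus U\end{smallmatrix}\right),-\right)\longrightarrow I_{\Lambda_j}(X,-)
\]
couples the $M(-,T_{j-n})$-component to the $\mathcal{U}(U_0,-)$-component, and your closing sentences (``finite presentation...is what ensures this kernel is finitely generated, so that a suitable $U_K$ exists and makes the construction go through'') only assert, without argument, that this coupled kernel is still covered by a \emph{representable} $\Lambda$-module lying in $\Lambda_{j-1}$. That is precisely what needs proof, and it is not a consequence of finite presentation alone. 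Concretely, you must produce $E_{j-1}\in\Lambda_{j-1}$, i.e.\ with upper-left entry in $\mathcal{T}_{j-n-1}$ and lower-right entry any object of $\mathcal{U}$, and a map $E_{j-1}\to E_j$ whose image under $\Lambda(-,?)$ surjects onto that mixed kernel; your sketch stops before constructing this map. The paper avoids the difficulty entirely by invoking the comma-category description of $\mathrm{Mod}\ \Lambda$ recorded in Remark \ref{equivalencecomma} (from \cite[Theorem~3.14, Proposition~6.3]{LGOS1}): under the equivalence $\mathfrak{f}\colon(\mathrm{Mod}\ \mathcal{T},\mathbb{G}(\mathrm{Mod}\ \mathcal{U}))\to\mathrm{Mod}\ \Lambda$, \emph{separate} representable presentations of the components $I_{\Lambda_j}^{(1)}(X,-)$ and $I_{\Lambda_j}^{(2)}(X,-)$ automatically assemble into a representable presentation of $I_{\Lambda_j}(X,-)$ by $\Lambda$-projectives built entrywise, so one never has to analyse the coupled kernel by hand. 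Since by Lemma \ref{ILambda1} one has $I_{\Lambda_j}^{(1)}(X,-)\cong I_{\mathcal{T}_{j-n}}(T,-)$ (presented by $(T_{j-n-1},-)\to(T_{j-n},-)$ from quasi-heredity of $\mathcal{T}$) and $I_{\Lambda_j}^{(2)}(X,-)\cong M_T\amalg\mathcal{U}(U,-)$ (finitely presented over $\mathcal{U}$ because $M_T\in\mathcal{F}({}_{\mathcal{U}}\Delta)$), the desired sequence falls out immediately. Without that structural lemma, or a substitute argument for why the coupled kernel admits the required cover, your proof of condition (ii) for $j>n$ is incomplete.
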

\bigskip

The proof of  Theorem \ref{quasiprinc} will be a consequence of a series of results that are presented below.

\begin{lem}\label{lem}
	Let $\mathcal U$ be  a quasi-hereditary category with respect to a filtration $\{\mathcal U_j\}_{j\ge 0}$. Let $M$ be a $\mathcal U$-module, and set $M^{[j]}:= \mathrm{Tr}_{\{\mathcal U(U,-)\}_{U\in\mathcal U_j } }M$. In addition, assume that $M\in\mathcal{F}(_\mathcal U\Delta)$. Then for all $U'\in \mathcal{U}$, $M^{[j]}(U')$ $=\{ m: m=M(s)(m')  \text{ for some }  s\in\mathcal U (U'', U'),  \text {with } U'' \in \mathcal{U}_j, \text{ and } m'\in M(U'') \}.$
\end{lem}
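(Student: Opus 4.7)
\emph{Proof plan.} The approach is to unwind the definition of the trace via the Yoneda lemma and then exploit additive closedness of $\mathcal U_j$ to absorb finite $K$-linear combinations into single expressions. First, I would note that by definition $M^{[j]}(U')$ is the $K$-linear span of the union of the images $\phi_{U'}(\mathcal U(U'',U'))$, taken over all $U''\in \mathcal U_j$ and all natural transformations $\phi:\mathcal U(U'',-)\to M$. By Yoneda, such $\phi$ correspond bijectively to elements $m':=\phi_{U''}(1_{U''})\in M(U'')$, and under this bijection $\phi_{U'}(s)=M(s)(m')$ for every $s\in \mathcal U(U'',U')$. Thus $M^{[j]}(U')$ is the $K$-span of the set
\[ S(U') := \{\, M(s)(m') \mid U''\in \mathcal U_j,\ s\in \mathcal U(U'',U'),\ m'\in M(U'')\,\}, \]
and the inclusion $S(U')\subseteq M^{[j]}(U')$ is immediate.

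The content of the lemma is the converse inclusion, i.e.\ that $S(U')$ is already a $K$-submodule of $M(U')$ and therefore coincides with its own $K$-linear span. Closure under scalars is clear since $\lambda\, M(s)(m')=M(\lambda s)(m')$. For closure under addition, given $M(s_1)(m_1')$ and $M(s_2)(m_2')$ with $s_i:U_i''\to U'$ and $U_i''\in \mathcal U_j$, I would exploit additive closedness of $\mathcal U_j$ to form $U'':=U_1''\oplus U_2''\in \mathcal U_j$. Since $M$ is an additive functor, the canonical map $M(U_1'')\oplus M(U_2'')\to M(U'')$ is an isomorphism, so one can take $s:=(s_1,s_2):U''\to U'$ and $m':=(m_1',m_2')\in M(U'')$, and then $M(s)(m')=M(s_1)(m_1')+M(s_2)(m_2')$ lies in $S(U')$.

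The only non-trivial step is precisely this aggregation: additive closedness of $\mathcal U_j$ is exactly what allows a finite $K$-linear combination of generators of the trace to be rewritten as a single generator. I do not expect the hypothesis $M\in \mathcal F(_{\mathcal U}\Delta)$ to enter the argument itself; it merely fixes the ambient setting (in particular, finite presentation and the existence of a $\Delta$-filtration) in which the lemma will be applied in the sequel.
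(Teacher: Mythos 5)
Your proof is correct, and it takes a genuinely different route from the paper's. The paper proves the inclusion $M^{[j]}(U')\subseteq \{M(s)(m')\}$ by invoking its Lemma~\ref{lem5}: under the hypothesis $M\in\mathcal F({}_\mathcal U\Delta)$, the subfunctor $M^{[j]}$ is the image of a \emph{single} representable $\mathcal U(E_j,-)$ with $E_j\in\mathcal U_j$, so one simply lifts the Yoneda element $\eta^m:(U',-)\to M^{[j]}$ corresponding to $m$ through that epimorphism to produce $s$ and $m'$. You instead work pointwise: you observe that $M^{[j]}(U')$ is the $K$-span of the set $S(U')$ of single-factorization elements, and then show $S(U')$ is already a $K$-subspace by aggregating finitely many witnesses via a finite direct sum $U''=U_1''\oplus U_2''$, which remains in $\mathcal U_j$ because $\mathcal U_j$ is a full additive (in fact additively closed) subcategory. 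The computation $M(s)(m')=M(s_1)(m_1')+M(s_2)(m_2')$ for $s=(s_1,s_2)$ and $m'=(m_1',m_2')$ is exactly right once one uses additivity of $M$ on the biproduct, and scalar closure follows from $\lambda M(s)(m')=M((\lambda 1_{U'})\circ s)(m')=M(\lambda s)(m')$. What each approach buys: the paper's argument is shorter once Lemma~\ref{lem5} is available, but it genuinely consumes the hypothesis $M\in\mathcal F({}_\mathcal U\Delta)$ (needed for $M^{[j]}$ to be cyclic over $\mathcal U_j$); your argument is entirely elementary and, as you correctly point out, does not use that hypothesis at all --- the conclusion holds for any $\mathcal U$-module $M$ so long as $\mathcal U_j$ is closed under finite direct sums. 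Your version is thus slightly more general, and it also makes transparent why the set on the right-hand side, which a priori is only a generating set for the trace, is already a submodule.
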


\begin{proof}
	 ($\subseteq$). By Yoneda's isomorphism  $Y^{U'}:\mathrm {Nat}\left((U',-), M^{[j]}\right)\cong  M^{[j]}(U')$, 
	 $\eta\mapsto \eta_{U'}(1_{U'})$.   Let  $m\in M^{[j]}(U')$,  then there  exists  $\eta^m:(U',-)\rightarrow M^{[j]}$ such that  $\eta^m_{U'}(1_{U'})=m$. On the other hand, by Lemma \ref{lem5},  there exists  $p: (U'',-) \rightarrow M^{[j]} \rightarrow 0$, with $U''\in \mathcal{U}_j$ and therefore there exists a morphism  $s:U''\rightarrow U'$ for which the following diagram is commutative:
	 
	 \[
	\xymatrix{  & (U',-) \ar@{-->}[ld]_{\exists\hspace{.08cm} (s,-)} \ar[d]^{\eta^m}&  \\ (U'',-) \ar[r]^{p} & M^{[j]}  \ar[r] & 0\ .}
	\]
		Again by Yoneda's lemma we have the following commutative diagram:

	 \begin{equation}
	 \begin{diagram}\label{LemmaQH1}
	 \node{((U'',-),M^{[j]}) }\arrow{e,t}{(s,-)^{*}}\arrow{s,l}{Y^{U''}}   \node{((U',-),M^{[j]})} 	 \arrow{s,l}{Y^{U'}} \\ 
           \node{M^{[j]}(U'')} \arrow{e,t}{M^{[j]}(s)}\node{M^{[j]}(U') \ .}
           \end{diagram}
	 \end{equation}
	 
	Let  $m':=Y^{U''}(p)$. Since $M^{[j]}$ is a subfunctor of $M_T$, we have $m=M^{[j]}(s)(m')=M(s)(m')$,  and we get what we desire. 
		
	( $\supseteq$). Let $m\in  M(U') $  and assume  there exists $s:U''\rightarrow U' $, with $U''\in \mathcal U_j$,  such that  $ m=M(s)(m')$ for some $ m'\in M(U'')$.
	 There then exist  morphisms $\eta^m:(U',-)\rightarrow M$ and  $p^{m'}:(U'',-) \rightarrow M$ such that $\eta^m_{U'}(1_{U'})=m$ and  $p^{m'}_{U''}(1_{U''})=m'$. Thus,  by using the diagram (\ref{LemmaQH1}),  we have  $p^{m'}\circ (g,-)=\eta^m$.  Note that $\mathrm{Im} \hspace{.08cm} p^{m'}$ is a subfunctor of  $M$ and is  generated by  $(U'',-)$. Since  $U''\in \mathcal{U}_j$,  $\mathrm{Im} \hspace{.08cm} p^{m'}$  is contained in the  largest submodule of  $M$  generated by $\{(U,):U\in \mathcal{U}_j\}$, namely $M^{[j]}$:  thus  $\mathrm{Im} \hspace{.08cm} p^{m'} \subset M^{[j]}$.  It follows that 
	  \[
 m=\eta^m_{U'}(1_{U'})=p^{m'}_{U'}\circ (s,-)_{U'}(1_{u'})=p^{m'}_{U'}(s) \in \mathrm{Im} \hspace{.08cm} p^{m'}(U') \subset M^{[j]}(U'):
 \]
that is,   $m \in M^{[j]}(U')$.
\end{proof}


In the remainder of this section, we will assume that the categories $\mathcal U$ and $\mathcal T$ are $\mathrm{Hom}$-finite Krull-Schmidt quasi-hereditary categories with respect to filtrations of additively closed subcategories $\{\mathcal U_j\}_{0\le j\le n}$ and $\{\mathcal T_j\}_{j\ge 0}$ of $\mathcal U$ and $\mathcal T$, respectively, and $M_T\in\mathcal F(_{\mathcal U}\Delta)$  for all $T\in\mathcal T$.

\begin{pro}\label{pro}
	Let  $E=\left(\begin{smallmatrix} T & 0 \\ M & U \end{smallmatrix}\right)$ and $E'=\left(\begin{smallmatrix} T' & 0 \\ M & U' \end{smallmatrix}\right)$ in $\Lambda$. Then, 
	\begin{eqnarray*} I_{\Lambda_j}(E,E')&=&\begin{pmatrix} 0 & 0 \\ M^{[j]}_T(U') & I_{\mathcal{U}_j}(U,U') \end{pmatrix}, \text{if } 0\leq j\leq n, \text{and }\\
	I_{\Lambda_j}(E,E')&=& \begin{pmatrix} I_{\mathcal{T}_{j-n}}(T,T') & 0 \\ M_T(U') & \mathcal{U}(U,U') \end{pmatrix}, \text{ if } j> n.
	\end{eqnarray*}
	\end{pro}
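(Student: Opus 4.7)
The plan is to prove each case separately and, in each case, establish both inclusions by analyzing the composition formula in $\Lambda$. Fix $E=\bigl(\begin{smallmatrix} T & 0 \\ M & U \end{smallmatrix}\bigr)$, $E'=\bigl(\begin{smallmatrix} T' & 0 \\ M & U' \end{smallmatrix}\bigr)$ and a generic morphism $\varphi=\bigl(\begin{smallmatrix} f & 0 \\ m & g \end{smallmatrix}\bigr)\in\Lambda(E,E')$. By definition, $\varphi\in I_{\Lambda_j}(E,E')$ if and only if $\varphi=\varphi_2\circ\varphi_1$ with $\varphi_1,\varphi_2$ going through some object $X\in\Lambda_j$. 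Substituting into the composition rule
\[
\varphi_2\circ\varphi_1=\begin{pmatrix} f_2 f_1 & 0 \\ m_2\bullet f_1+g_2\bullet m_1 & g_2 g_1 \end{pmatrix},
\]
both cases of the proposition reduce to reading off the constraints forced by the shape of the intermediate $X$.

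For $0\leq j\leq n$, the intermediate $X=\bigl(\begin{smallmatrix} 0 & 0 \\ M & V \end{smallmatrix}\bigr)$ has $V\in\mathcal{U}_j$ and zero $\mathcal{T}$-coordinate, which forces $f_1\in\mathcal{T}(T,0)=0$ and $f_2\in\mathcal{T}(0,T')=0$, hence $f=0$ and $m=g_2\bullet m_1=M(g_2\otimes 1_T)(m_1)$ with $g_2:V\to U'$, $V\in\mathcal{U}_j$, $m_1\in M(V,T)=M_T(V)$. The $(2,2)$-entry $g=g_2g_1$ then factors through $V\in\mathcal{U}_j$, so $g\in I_{\mathcal{U}_j}(U,U')$; and Lemma~\ref{lem} applied to the $\mathcal{U}$-module $M_T$ recognizes the $(2,1)$-entry precisely as an element of $M^{[j]}_T(U')$, giving the forward inclusion. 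For the reverse inclusion, given $g=g_2g_1$ through $V_1\in\mathcal{U}_j$ and $m=M_T(s)(m')$ with $s:V_2\to U'$, $V_2\in\mathcal{U}_j$, $m'\in M_T(V_2)$, the additive closure of $\mathcal{U}_j$ lets us take the intermediate $\bigl(\begin{smallmatrix} 0 & 0 \\ M & V_1\oplus V_2 \end{smallmatrix}\bigr)\in\Lambda_j$ and factor $\varphi$ through it using the maps $\binom{g_1}{0}:U\to V_1\oplus V_2$ and $(g_2,\,s):V_1\oplus V_2\to U'$ together with middle entries $m_1'=(0,m')\in M(V_1\oplus V_2,T)$ and $m_2'=0$; a quick check using the composition rule recovers $\varphi$.

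For $j>n$, the intermediate $X=\bigl(\begin{smallmatrix} T'' & 0 \\ M & U'' \end{smallmatrix}\bigr)$ has $T''\in\mathcal{T}_{j-n}$ while $U''\in\mathcal{U}$ is unrestricted, so the forward direction is immediate: $f=f_2 f_1$ factors through $T''\in\mathcal{T}_{j-n}$, giving $f\in I_{\mathcal{T}_{j-n}}(T,T')$, while $g\in\mathcal{U}(U,U')$ and $m\in M_T(U')$ are automatically admissible. For the converse, given any $f=f_2 f_1$ through $T''\in\mathcal{T}_{j-n}$, any $g\in\mathcal{U}(U,U')$ and any $m\in M_T(U')$, we build the factorization through $\bigl(\begin{smallmatrix} T'' & 0 \\ M & U\oplus U' \end{smallmatrix}\bigr)\in\Lambda_j$ by taking $g_1=\iota_U:U\to U\oplus U'$ and $g_2=(g,\,1_{U'}):U\oplus U'\to U'$ so that $g_2 g_1=g$, and setting $m_1=(0,m)\in M(U,T)\oplus M(U',T)=M(U\oplus U',T)$, $m_2=0$, yielding $g_2\bullet m_1=m$ and $m_2\bullet f_1=0$.

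The main subtle points will be, in the first case, to invoke Lemma~\ref{lem} in the correct direction so that ``lies in $M^{[j]}_T(U')$'' coincides exactly with ``is realized as $g_2\bullet m_1$ through an object of $\mathcal{U}_j$'', and, in the second case, to notice the enlargement trick $U\rightsquigarrow U\oplus U'$ needed to produce an arbitrary $m\in M_T(U')$ as a $g_2\bullet m_1$ contribution through a $\mathcal{T}_{j-n}$-intermediate (a direct choice with $U''=U$ is too rigid because the $g_2\bullet m_1$ term is then constrained by $g$). Everything else is direct manipulation of the composition law together with additive closure of the filtering subcategories.
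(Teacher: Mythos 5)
Your proof is correct and follows the same route as the paper's: both directions reduce to unwinding factorizations through an object of $\Lambda_j$ via the composition formula, with Lemma~\ref{lem} identifying the $(2,1)$-entry as an element of $M^{[j]}_T(U')$. In fact you are more careful than the paper on the reverse inclusions: the paper asserts an ``if and only if'' but only exhibits the forward factorization, and the $V_1\oplus V_2$ trick you use for $0\le j\le n$ is genuinely needed there to produce a single intermediate object realizing both $g\in I_{\mathcal U_j}(U,U')$ and $m\in M^{[j]}_T(U')$ simultaneously. One small remark: in the case $j>n$ the enlargement to $U\oplus U'$ is unnecessary. Since the $\mathcal U$-coordinate of an object of $\Lambda_j$ is unconstrained, the obvious intermediate is $\bigl(\begin{smallmatrix} T'' & 0 \\ M & U' \end{smallmatrix}\bigr)$: factor as $\bigl(\begin{smallmatrix} f_2 & 0 \\ 0 & 1_{U'} \end{smallmatrix}\bigr)\circ\bigl(\begin{smallmatrix} f_1 & 0 \\ m & g \end{smallmatrix}\bigr)$, which yields $1_{U'}\bullet m=m$ directly. (Your observation that $U''=U$ is too rigid is correct, but the natural alternative is $U''=U'$, not $U\oplus U'$.) This is a cosmetic simplification; your argument is valid as written.
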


\begin{proof}
	Let $\left(\begin{smallmatrix}f & 0 \\ m & h	\end{smallmatrix}\right)\in  \mathrm{Hom}_{\Lambda}(E,E')$. Therefore,  $f\in \mathrm{ Hom}_{\mathcal{T}}(T,T')$, $m\in M(T,U')$ and $h\in \mathrm{Hom}_{\mathcal{U}}(U,U')$.
	
	 ($0\leq j\leq n$).  The morphism $\left(\begin{smallmatrix}f & 0 \\ m & h \end{smallmatrix}\right)$ lies in $I_{\Lambda_j}(E,E')$ if and only of there is a  commutative diagram 
	\[
    \begin{tikzcd}[ampersand replacement=\&]
   \begin{pmatrix} T & 0 \\ M & U \end{pmatrix} \ar[r, "{\left(\begin{smallmatrix}f & 0 \\ m & h \end{smallmatrix}\right)}"] \ar[dr, "{\left(\begin{smallmatrix}0 & 0 \\ m' & r \end{smallmatrix}\right)}"']\& \begin{pmatrix} T' & 0 \\ M & U' \end{pmatrix}\\ \& \begin{pmatrix} 0 & 0 \\ M & U'' \end{pmatrix} \ar[u, "{\left(\begin{smallmatrix}0 & 0 \\ 0 & s \end{smallmatrix}\right)}"']
    \end{tikzcd}
	\] 
 with  $\left(\begin{smallmatrix} 0 & 0 \\ M & U'' \end{smallmatrix}\right) \in \Lambda_j$, $1\leq j \leq n$. Thus,  $U'' \in \mathcal{U}_j$ and   $\left(\begin{smallmatrix}0 & 0 \\ 0 & s \end{smallmatrix}\right)\left(\begin{smallmatrix}0 & 0 \\ m' & r \end{smallmatrix}\right)$=$\left(\begin{smallmatrix}f & 0 \\ m & h \end{smallmatrix}\right)$;  therefore,  $f=0$, $m=sm'$ and $h=sr$.  It is clear that 
 $h \in I_{\mathcal{U}_j}(U,U')$ because $U'' \in \mathcal{U}_j$. In this way we conclude  that  $\left(\begin{smallmatrix}f & 0 \\ m & h \end{smallmatrix}\right) \in I_{\Lambda_j}(E,E')$  if and only if  $h\in I_{\mathcal{U}_j}(U,U')$ and  $m=sm'=M_T(s)(m')$ where $h: U\xrightarrow{r}U''\xrightarrow {s} U'$ and  $U'' \in \mathcal{U}_j$, in other words, $h\in I_{\mathcal{U}_j}(U,U')$ and  $m\in M_T^{[j]}(U')$, by Lemma \ref{lem}. Thus, $I_{\Lambda_j}(E,E')=\left(\begin{smallmatrix} 0 & 0 \\ M^{[j]}_T(U') & I_{\mathcal{U}_j}(U,U') \end{smallmatrix}\right)$.
 
 ($j>n$) .  $\left(\begin{smallmatrix}f & 0 \\ m & h \end{smallmatrix}\right)\in I_{\Lambda_j}(E,E')$  if and only if there is a commutative diagram
 \[
 \begin{tikzcd}[ampersand replacement=\&]
 \begin{pmatrix} T & 0 \\ M & U \end{pmatrix} \ar[r, "{\left(\begin{smallmatrix}f & 0 \\ m & h \end{smallmatrix}\right)}"] \ar[dr, "{\left(\begin{smallmatrix}r & 0 \\ m_1 & h_1 \end{smallmatrix}\right)}"']
 \& \begin{pmatrix} T' & 0 \\ M & U' \end{pmatrix} \\ \& \begin{pmatrix} T'' & 0 \\ M & U'' \end{pmatrix} \ar[u, "{\left(\begin{smallmatrix} s & 0 \\ m_2 & h_2 \end{smallmatrix}\right)}"']
 \end{tikzcd}
 \]
 with  $\left(\begin{smallmatrix} T'' & 0 \\ M & U'' \end{smallmatrix}\right) \in \Lambda_j$, for $j=n+(j-n)>n$, that is,   $T''\in\mathcal T_{j-n}$ and $U'' \in \mathcal{U}$.
 
 Since  $\left(\begin{smallmatrix}s & 0 \\ m_2 & h_2 \end{smallmatrix}\right)\left(\begin{smallmatrix}r & 0 \\ m_1 & h_1 \end{smallmatrix}\right)=\left(\begin{smallmatrix}f & 0 \\ m & h \end{smallmatrix}\right)$,  we get that $f=sr$, $m=m_2r+h_2m_1$ and $h=h_2h_1$; moreover,  $m\in M(U',T)$, $h\in\mathcal U(U,U')$,   and  $f\in I_{\mathcal{T}_{j-n}}(T,T')$ since  $r\in \mathcal T(T ,  T'')$ and $T''\in\mathcal T_{j-n} $.  Therefore ,  $\left(\begin{smallmatrix}f & 0 \\ m & h \end{smallmatrix}\right)\in I_{\Lambda_j}(E,E')$ if and only if  $m\in M(U',T)$, $h\in\mathcal U(U,U')$,   and  $f\in I_{\mathcal{T}_{j-n}}(T,T')$. 
 
\end{proof}

\begin{pro}\label{onequasi}
	For each pair  $E,E'\in \Ind \hspace{.1cm} \Lambda_j- \Ind \hspace{.1cm} \Lambda_{j-1}$, we have
	\[
	\mathrm{rad}_{\Lambda}(E,E')= I_{ \Lambda_{j-1}}(E,E').
	\]
\end{pro}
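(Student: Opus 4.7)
The plan is to split the proof according to the definition of the filtration (\ref{Filt1}) into two symmetric cases. By the description of $\mathrm{Ind}\,\Lambda_j - \mathrm{Ind}\,\Lambda_{j-1}$ given just before the theorem, when $1 \le j \le n$ both $E$ and $E'$ have zero $\mathcal{T}$-component, so $E = \begin{pmatrix} 0 & 0 \\ M & U \end{pmatrix}$ and $E' = \begin{pmatrix} 0 & 0 \\ M & U' \end{pmatrix}$ with $U, U' \in \mathrm{Ind}\,\mathcal{U}_j - \mathrm{Ind}\,\mathcal{U}_{j-1}$. When $j > n$, both objects have zero $\mathcal{U}$-component, so $E = \begin{pmatrix} T & 0 \\ M & 0 \end{pmatrix}$, $E' = \begin{pmatrix} T' & 0 \\ M & 0 \end{pmatrix}$ with $T, T' \in \mathrm{Ind}\,\mathcal{T}_{j-n} - \mathrm{Ind}\,\mathcal{T}_{j-n-1}$.

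I would carry out the first case in detail. The key observation is that, by $K$-bilinearity of $M$ and by $\mathcal{T}(0,0) = 0$, one has $M(U',0) = 0$, so projection on the $(2,2)$-entry gives a ring isomorphism $\mathrm{End}_\Lambda(E) \cong \mathrm{End}_\mathcal{U}(U)$ and a compatible bimodule isomorphism $\mathrm{Hom}_\Lambda(E,E') \cong \mathcal{U}(U,U')$. Since $1_E$ corresponds to $1_U$, the defining condition ``$1_E - \psi\phi$ invertible for all $\psi \in \mathrm{Hom}_\Lambda(E',E)$'' for the Jacobson radical translates directly to the corresponding condition in $\mathcal{U}$, giving $\mathrm{rad}_\Lambda(E,E') \cong \mathrm{rad}_\mathcal{U}(U,U')$. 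Quasi-heredity of $\mathcal{U}$ via Theorem \ref{Mar1} then yields $\mathrm{rad}_\mathcal{U}(U,U') = I_{\mathcal{U}_{j-1}}(U,U')$, while Proposition \ref{pro} specialized to $T = T' = 0$ (so that $M_T^{[j-1]}(U') = 0$) gives $I_{\Lambda_{j-1}}(E,E') = \begin{pmatrix} 0 & 0 \\ 0 & I_{\mathcal{U}_{j-1}}(U,U') \end{pmatrix}$, and the two sides agree under the isomorphism above.

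The case $j > n$ is entirely symmetric, with $\mathcal{T}$ replacing $\mathcal{U}$. Now $\mathcal{U}(0,0) = 0$ and $M(0,T) = 0$, so projection onto the $(1,1)$-entry collapses $\mathrm{Hom}_\Lambda(E,E')$ onto $\mathcal{T}(T,T')$ compatibly with composition and identities; hence $\mathrm{rad}_\Lambda(E,E') \cong \mathrm{rad}_\mathcal{T}(T,T') = I_{\mathcal{T}_{j-n-1}}(T,T')$ by Theorem \ref{Mar1}. Applying Proposition \ref{pro} (the subcases $j - 1 = n$ and $j - 1 > n$ reduce the same way, since $U = U' = 0$ forces the $(2,1)$- and $(2,2)$-entries to vanish regardless of which branch of the proposition applies) identifies $I_{\Lambda_{j-1}}(E,E')$ with $\begin{pmatrix} I_{\mathcal{T}_{j-n-1}}(T,T') & 0 \\ 0 & 0 \end{pmatrix}$, matching as required.

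The only point requiring a little care is the reduction of the Jacobson radical from $\Lambda$ to the relevant diagonal subcategory, but this is immediate from the block-diagonal structure of $\mathrm{End}_\Lambda(E)$ (and of $\mathrm{Hom}_\Lambda(E,E')$) in each case. Thus the argument is essentially a combination of this block-structure observation with the already-established Proposition \ref{pro} and Theorem \ref{Mar1}, so no genuine obstacle arises.
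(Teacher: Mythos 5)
Your proof is correct and follows essentially the same route as the paper's: split according to whether $1 \le j \le n$ or $j > n$, reduce the computation of $\mathrm{rad}_\Lambda(E,E')$ to the corresponding diagonal category, invoke Theorem \ref{Mar1} for that category's quasi-heredity, and match the result against Proposition \ref{pro}. The only difference is cosmetic: the paper cites the general formula $\mathrm{rad}_\Lambda\left(\left(\begin{smallmatrix} T & 0 \\ M & U \end{smallmatrix}\right),\left(\begin{smallmatrix} T' & 0 \\ M & U' \end{smallmatrix}\right)\right) = \left(\begin{smallmatrix} \mathrm{rad}_\mathcal{T}(T,T') & 0 \\ M(U',T) & \mathrm{rad}_\mathcal{U}(U,U') \end{smallmatrix}\right)$ from \cite[Proposition 3.7]{LGOS1}, whereas you derive the same conclusion directly in these degenerate cases (one diagonal component zero) from the collapse of the block structure of $\mathrm{Hom}_\Lambda(E,E')$ and $\mathrm{End}_\Lambda(E)$ onto a single entry; this is a valid, slightly more self-contained substitute for that citation, and your explicit remark on the $j = n+1$ boundary is a detail the paper's proof leaves implicit.
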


\begin{proof}  

The proof is divided in two cases. 


	($1\leq j \leq n$). Let $E=\left (\begin{smallmatrix} 0 & 0 \\ M & U \end{smallmatrix}\right)$ and $E'=\left(\begin{smallmatrix} 0 & 0 \\ M & U' \end{smallmatrix}\right)$ for which  $U,U'\in \mathrm{Ind} \hspace{.1cm} \mathcal{U}_j- \mathrm{Ind} \hspace{.1cm} \mathcal{U}_{j-1}$. Therefore, by \cite[Proposition 3.7]{LGOS1}
	\[
	\footnotesize \mathrm{rad}_{\Lambda}\left(\begin{pmatrix} 0 & 0 \\ M & U \end{pmatrix},\begin{pmatrix} 0 & 0 \\ M & U' \end{pmatrix}\right)=\begin{pmatrix} \mathrm{rad}_{\mathcal{T}}(0,0) & 0 \\ M(U',0) & \mathrm{rad}_{\mathcal{U}}(U,U') \end{pmatrix}=\begin{pmatrix} 0 & 0 \\ 0 & \mathrm{rad}_{\mathcal{U}}(U,U') \end{pmatrix}.
	\]
	On the other hand,  by Theorem \ref{Mar1} we have $\mathrm{rad}_{\mathcal{U}}(U,U')=I_{\mathcal{U}_{j-1}}(U,U')$. Therefore, by  Proposition \ref{pro} we conclude that
	\[
	\footnotesize \mathrm{rad}_{\Lambda}\left(\begin{pmatrix} 0 & 0 \\ M & U \end{pmatrix},\begin{pmatrix} 0 & 0 \\ M & U' \end{pmatrix}\right)=I_{\Lambda_{j-1}}\left(\begin{pmatrix} 0 & 0 \\ M & U \end{pmatrix},\begin{pmatrix} 0 & 0 \\ M & U' \end{pmatrix}\right).
	\]
	
	($j>n$). Let  $E=\left(\begin{smallmatrix} T & 0 \\ M & 0 \end{smallmatrix}\right)$ and $E'=\left(\begin{smallmatrix} T' & 0 \\ M & 0 \end{smallmatrix}\right)$  such that $T,T'\in \mathrm{Ind} \hspace{.1cm} \mathcal{U}_{j-n}- \mathrm{Ind} \hspace{.1cm} \mathcal{U}_{j-n-1}$.  By \cite[Proposition 3.7]{LGOS1} we have
	\[
	\footnotesize \mathrm{rad}_{\Lambda}\left(\begin{pmatrix} T & 0 \\ M & 0 \end{pmatrix},\begin{pmatrix} T' & 0 \\ M & 0 \end{pmatrix}\right)=\begin{pmatrix} \mathrm{rad}_{\mathcal{T}}(T,T') & 0 \\ M(0,T) & \mathrm{rad}_{\mathcal{U}}(0,0) \end{pmatrix}=\begin{pmatrix} \mathrm{rad}_{\mathcal{T}}(T,T') & 0 \\ 0 & 0 \end{pmatrix}.
	\]
	 Again by Theorem \ref{Mar1} we know that  $\mathrm{rad}_{\mathcal{T}}(T,T')=I_{\mathcal{T}_{j-n-1}}(T,T')$.  It follows from Proposition \ref{pro} that
	\[
	\footnotesize \mathrm{rad}_{\Lambda}\left(\begin{pmatrix} T & 0 \\ M & 0 \end{pmatrix},\begin{pmatrix} T' & 0 \\ M & 0 \end{pmatrix}\right)=I_{\Lambda_{j-1}}\left(\begin{pmatrix} T & 0 \\ M & 0 \end{pmatrix},\begin{pmatrix} T' & 0 \\ M & 0 \end{pmatrix}\right).
	\]
\end{proof}

\begin{rk}\label{equivalencecomma}
 Recall, \cite[Theorem 3.14]{LGOS1}, that there exists an equivalence of categories
  $\mathfrak{f}: \left(\mathrm{Mod}\ \mathcal{T},\mathbb{G}(\mathrm{Mod}\ \mathcal{U})\right) \rightarrow  \mathrm{Mod}\ \Lambda$, and, given a $\Lambda$-module $C$, there exists a pair of functors $C_1: \mathcal{T} \rightarrow \mathbf{Ab}$, $C_2: \mathcal{U} \rightarrow \mathbf{Ab}$ and $\xymatrix{C_1 \ar[r]^f & \mathbb{G}(C_2)}$ such that  $\mathfrak{f}((C_1,f,C_2))\cong C$, where $C_1(T'):=C\left(( \begin{smallmatrix} T' & 0 \\ M & 0 \end{smallmatrix} )\right)$ and $C_2(U'):=C\left( (\begin{smallmatrix} 0 & 0 \\ M & U' \end{smallmatrix}) \right)$. Moreover, if there exists exact sequences 
	\[
	\xymatrix{(T_1,-)\ar[r] & (T_0,-)\ar[r] & C_1 \ar[r] & 0}, \hspace{.2cm} T_0,T_1 \in \mathcal{T} \hspace{.3cm} \text{and }
	\]
	\[
    \xymatrix{(U_1,-)\ar[r] & (U_0,-)\ar[r] & C_2 \ar[r] & 0}, \hspace{.2cm} U_0,U_1 \in \mathcal{U}, \hspace{.33cm} 
    \]
then there exists an exact sequence
	\[
\begin{tikzcd}
\left( \left(\begin{smallmatrix} T_1 & 0 \\ M & U_1 \end{smallmatrix}\right) , - \right) \ar[r]
& \left( \left(\begin{smallmatrix} T_0 & 0 \\ M & U_0 \end{smallmatrix}\right) , - \right) \ar[r]  & \mathfrak{f}\left(\left(I_{\Lambda_j}^{(1)},f,I_{\Lambda_j}^{(2)}\right)\right) \ar[r] & 0, \end{tikzcd}
    \]   
   where $P_j:=\left(\left( \begin{smallmatrix} T_j & 0 \\ M & U_j \end{smallmatrix}\right) , - \right)$ is  a projective $\Lambda-$module  for  $j=0,1;$ see proof of   \cite[Proposition 6.3]{LGOS1}.
 \end{rk}

As  a direct result of  the above proposition, we have:

\begin{lem}\label{ILambda1}
Let $X= \left(\begin{smallmatrix} T & 0 \\ M & U \end{smallmatrix}\right)\in\Lambda$. Let us  identify  $I_{\Lambda_j}(X,-)$ with

$\left (I_{\Lambda_j}^{(1)}(X,-), f, I_{\Lambda_j}^{(2)}(X,-)\right )$ in $(\mathrm{Mod}\ \mathcal T, \mathbb{G}(\mathrm{Mod}\ \mathcal U))$, then  

\begin{itemize}
\item [(i)] $ I_{\Lambda_j}^{(1)}(X,-)=0$ and $I_{\Lambda_j}^{(2)}(X,-)\cong M_T^{[j]} \coprod I_{\mathcal{U}_j}(U,-)$, \ if \ \ $0\le j\le n$;
\item [(ii)] $ I_{\Lambda_j}^{(1)}(X,-)\cong I_{\mathcal T_{j-n}}(T,-)$ and $I_{\Lambda_j}^{(2)}(X,-)\cong M_T \coprod \mathcal U(U,-)$, \ if \ \ $j>n$.
\end{itemize}
\end{lem}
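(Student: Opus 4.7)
The plan is to read off both components $I_{\Lambda_j}^{(1)}(X,-)$ and $I_{\Lambda_j}^{(2)}(X,-)$ from the formulas already established in Proposition \ref{pro} by specialising the second argument to the two types of ``pure'' objects indicated in Remark \ref{equivalencecomma}. Recall that under the equivalence $\mathfrak{f}$, the components of a $\Lambda$-module $C$ are recovered by
\[
C_1(T') = C\!\left(\begin{smallmatrix} T' & 0 \\ M & 0 \end{smallmatrix}\right), \qquad C_2(U') = C\!\left(\begin{smallmatrix} 0 & 0 \\ M & U' \end{smallmatrix}\right).
\]
So I would simply take $C = I_{\Lambda_j}(X,-)$ and evaluate.

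First, the case $0\le j \le n$. Applying Proposition \ref{pro} to the objects $E' = \left(\begin{smallmatrix} T' & 0 \\ M & 0 \end{smallmatrix}\right)$ and $E' = \left(\begin{smallmatrix} 0 & 0 \\ M & U' \end{smallmatrix}\right)$, the formula
$I_{\Lambda_j}(X,E') = \left(\begin{smallmatrix} 0 & 0 \\ M_T^{[j]}(U') & I_{\mathcal U_j}(U,U') \end{smallmatrix}\right)$ yields, in the first case, $I_{\Lambda_j}^{(1)}(X,T') = 0$ since both $M_T^{[j]}(0)$ and $I_{\mathcal U_j}(U,0)$ vanish, and in the second case,
\[
I_{\Lambda_j}^{(2)}(X,U') \;=\; \left(\begin{smallmatrix} 0 & 0 \\ M_T^{[j]}(U') & I_{\mathcal U_j}(U,U') \end{smallmatrix}\right) \;\cong\; M_T^{[j]}(U') \;\oplus\; I_{\mathcal U_j}(U,U')
\]
as abelian groups, the isomorphism being natural in $U'$ because $\mathcal U$ acts componentwise on each slot of the matrix. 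This gives (i).

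Next, the case $j>n$. Proposition \ref{pro} gives $I_{\Lambda_j}(X,E') = \left(\begin{smallmatrix} I_{\mathcal T_{j-n}}(T,T') & 0 \\ M_T(U') & \mathcal U(U,U') \end{smallmatrix}\right)$. Specialising $E' = \left(\begin{smallmatrix} T' & 0 \\ M & 0 \end{smallmatrix}\right)$ gives $I_{\Lambda_j}^{(1)}(X,T') \cong I_{\mathcal T_{j-n}}(T,T')$ since the bottom row collapses, and specialising $E' = \left(\begin{smallmatrix} 0 & 0 \\ M & U' \end{smallmatrix}\right)$ gives $I_{\Lambda_j}^{(2)}(X,U') \cong M_T(U') \oplus \mathcal U(U,U')$, which is $M_T \oplus \mathcal U(U,-)$ evaluated at $U'$, and naturality in $T'$ and $U'$ is again immediate. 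This yields (ii).

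There is no serious obstacle here: both statements are direct instantiations of Proposition \ref{pro} followed by the definition of the components $C_1, C_2$ from Remark \ref{equivalencecomma}. The only minor point worth mentioning is that one should check that the isomorphisms $I_{\Lambda_j}^{(2)}(X,U') \cong M_T^{[j]}(U') \oplus I_{\mathcal U_j}(U,U')$ (and analogously in case (ii)) respect the $\mathcal U$-action, but this is automatic because the action of a morphism $h\in\mathcal U(U',U'')$ on $I_{\Lambda_j}(X,-)$ is given by postcomposition with $\left(\begin{smallmatrix} 0 & 0 \\ 0 & h \end{smallmatrix}\right)$, which acts diagonally on the two direct summands.
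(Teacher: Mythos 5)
Your proof is correct and follows essentially the same route as the paper's: in both cases one specializes the second argument of $I_{\Lambda_j}(X,-)$ to the two pure objects $\left(\begin{smallmatrix} T' & 0 \\ M & 0 \end{smallmatrix}\right)$ and $\left(\begin{smallmatrix} 0 & 0 \\ M & U' \end{smallmatrix}\right)$, applies the formulas from Proposition \ref{pro}, and reads off the two components of the triple under the identification of Remark \ref{equivalencecomma}. The additional remark on naturality of the splitting is a harmless elaboration the paper leaves implicit.
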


\begin{proof}
 Let $T'\in \mathcal{T}$ and $U'\in \mathcal{U}$. The proof is divided into two cases. 

($1\leq j \leq n$). By Proposition  \ref{pro}, we have 
	\[
	I_{\Lambda_j}^{(1)}(T')=I_{\Lambda_j}\left( X, \left( \begin{smallmatrix} T' & 0 \\ M & 0 \end{smallmatrix}\right) \right)=\left(\begin{smallmatrix} 0 & 0 \\ 0 & 0 \end{smallmatrix}\right) \hspace{.1cm} 
	\]
	and
	 \begin{eqnarray*} 
	 I_{\Lambda_j}^{(2)}(U')&=&I_{\Lambda_j}\left( X, \left(\begin{smallmatrix} 0 & 0 \\ M & U' \end{smallmatrix}\right) \right)\\
	                                    &=&\left(\begin{smallmatrix} 0 & 0 \\ M^{[j]}(U') & I_{\mathcal{U}_j}(U,U') \end{smallmatrix}\right) \cong M^{[j]}(U') \coprod I_{\mathcal{U}_j}(U,U') .
	                                    	\end{eqnarray*}

($j>n$).  By  Proposition \ref{pro}, we have
   \[
  I_{\Lambda_j}^{(1)}(T')=I_{\Lambda_j}\left( X, \left(\begin{smallmatrix} T' & 0 \\ M & 0 \end{smallmatrix}\right) \right) = \left(\begin{smallmatrix} I_{\mathcal{T}_{j-n}}(T,T') & 0 \\ M^{[j]}_T(0) & 0 \end{smallmatrix}\right) \cong I_{\mathcal{T}_{j-n}}(T,T') \hspace{.2cm} \small \text{and}
   \]
   \[
  I_{\Lambda_j}^{(2)}(U')=I_{\Lambda_j}\left( X, \left(\begin{smallmatrix} 0 & 0 \\ M & U' \end{smallmatrix}\right) \right)= \left(\begin{smallmatrix} I_{\mathcal{T}_{j-n}}(T,0) & 0 \\ M_T(U') & \mathcal U(U,U') \end{smallmatrix}\right) \cong M_T(U') \coprod \mathcal U(U,U').
   \]	                                    
\end{proof}

\begin{pro}\label{twoquasi}
	Let $X= \left(\begin{smallmatrix} T & 0 \\ M & U \end{smallmatrix}\right)\in \Lambda$, and assume $M_T\in \mathcal{F}({}_{\mathcal{U}}\Delta)$ for all $T\in \mathcal{T}$. Then, for all $j\geq 1$ the following exact sequence exists:
	\[
	\xymatrix{(E_{j-1},-) \ar[r] & (E_{j},-) \ar[r] & I_{\Lambda_j}(X,-)\ar[r]&0},
	\]
	with $E_j\in \Lambda_j$ and $E_{j-1}\in \Lambda_{j-1}$.
\end{pro}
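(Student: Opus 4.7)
The strategy is to translate the problem into the equivalent category $(\mathrm{Mod}\ \mathcal{T},\mathbb{G}(\mathrm{Mod}\ \mathcal{U}))$ via Remark~\ref{equivalencecomma} and build a projective presentation of $I_{\Lambda_j}(X,-)$ out of compatible projective presentations of its two components, which are computed in Lemma~\ref{ILambda1}. The argument splits into two cases according to whether $1\le j\le n$ or $j>n$.

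\smallskip
\noindent\textbf{Case $1\le j\le n$.} By Lemma~\ref{ILambda1} we have $I_{\Lambda_j}^{(1)}(X,-)=0$ and $I_{\Lambda_j}^{(2)}(X,-)\cong M_T^{[j]}\amalg I_{\mathcal U_j}(U,-)$. I would take the trivial presentation $0\to 0\to I_{\Lambda_j}^{(1)}(X,-)\to 0$ on the $\mathcal T$-side, so $T_0=T_1=0$. On the $\mathcal U$-side, since $M_T\in \mathcal F({}_{\mathcal U}\Delta)$, Lemma~\ref{lem5}(a) provides a presentation
\[
\mathcal U(V_{j-1},-)\to \mathcal U(V_j,-)\to M_T^{[j]}\to 0
\]
with $V_{j-1}\in\mathcal U_{j-1}$, $V_j\in\mathcal U_j$, while Theorem~\ref{Mar1}(ii) applied to the quasi-hereditary category $\mathcal U$ yields
\[
\mathcal U(W_{j-1},-)\to \mathcal U(W_j,-)\to I_{\mathcal U_j}(U,-)\to 0
\]
with $W_{j-1}\in\mathcal U_{j-1}$, $W_j\in\mathcal U_j$. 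Taking the direct sum gives a presentation of $I_{\Lambda_j}^{(2)}(X,-)$ by $\mathcal U(V_{j-1}\oplus W_{j-1},-)\to\mathcal U(V_j\oplus W_j,-)$. Now invoke the second part of Remark~\ref{equivalencecomma}: the resulting pair of presentations (automatically compatible with the structure map since $I_{\Lambda_j}^{(1)}(X,-)=0$) yields an exact sequence
\[
\Lambda\bigl(E_{j-1},-\bigr)\to \Lambda\bigl(E_j,-\bigr)\to I_{\Lambda_j}(X,-)\to 0,
\]
where $E_{j-1}=\left(\begin{smallmatrix} 0 & 0\\ M & V_{j-1}\oplus W_{j-1}\end{smallmatrix}\right)\in\Lambda_{j-1}$ and $E_j=\left(\begin{smallmatrix} 0 & 0\\ M & V_j\oplus W_j\end{smallmatrix}\right)\in\Lambda_j$.

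\smallskip
\noindent\textbf{Case $j>n$.} Set $k=j-n\ge 1$. Lemma~\ref{ILambda1} gives $I_{\Lambda_j}^{(1)}(X,-)\cong I_{\mathcal T_k}(T,-)$ and $I_{\Lambda_j}^{(2)}(X,-)\cong M_T\amalg \mathcal U(U,-)$. On the $\mathcal T$-side, Theorem~\ref{Mar1}(ii) applied to $\mathcal T$ produces
\[
\mathcal T(T_{k-1},-)\to \mathcal T(T_k,-)\to I_{\mathcal T_k}(T,-)\to 0
\]
with $T_{k-1}\in\mathcal T_{k-1}$, $T_k\in\mathcal T_k$. On the $\mathcal U$-side, since $M_T$ is finitely presented and $\mathcal U(U,-)$ is itself projective, there is a presentation $\mathcal U(Y_1,-)\to \mathcal U(Y_0\oplus U,-)\to M_T\amalg \mathcal U(U,-)\to 0$ with $Y_0,Y_1\in\mathcal U$. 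Again by Remark~\ref{equivalencecomma} (and the standard lifting argument using projectivity of the representables to make the presentations compatible with $f$), I obtain
\[
\Lambda\bigl(E_{j-1},-\bigr)\to \Lambda\bigl(E_j,-\bigr)\to I_{\Lambda_j}(X,-)\to 0
\]
with $E_{j-1}=\left(\begin{smallmatrix} T_{k-1} & 0\\ M & Y_1\end{smallmatrix}\right)$ and $E_j=\left(\begin{smallmatrix} T_k & 0\\ M & Y_0\oplus U\end{smallmatrix}\right)$. Because $T_{k-1}\in\mathcal T_{k-1}$ we have $E_{j-1}\in\Lambda_{n+(k-1)}=\Lambda_{j-1}$, and similarly $E_j\in\Lambda_{n+k}=\Lambda_j$.

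\smallskip
The only real subtlety is ensuring that the two chosen presentations are compatible with the connecting natural transformation $f\colon I_{\Lambda_j}^{(1)}(X,-)\to \mathbb G\bigl(I_{\Lambda_j}^{(2)}(X,-)\bigr)$ so that Remark~\ref{equivalencecomma} applies. In the first case this is automatic since $I_{\Lambda_j}^{(1)}(X,-)=0$; in the second case one lifts $f$ along the projective cover of $I_{\mathcal T_k}(T,-)$ using projectivity of $\mathcal U(Y_0\oplus U,-)$, then extends this lift to the syzygy $\mathcal T(T_{k-1},-)$, which is the routine part of the argument.
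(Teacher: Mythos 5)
Your proof is correct and follows essentially the same route as the paper's: split on $1\le j\le n$ versus $j>n$, use Proposition~\ref{pro}/Lemma~\ref{ILambda1} to identify the two components of $I_{\Lambda_j}(X,-)$, obtain presentations of those components from Lemma~\ref{lem5}(a) (for $M_T^{[j]}$), Theorem~\ref{Mar1}(ii) (for $I_{\mathcal U_j}(U,-)$ and $I_{\mathcal T_{j-n}}(T,-)$), and finite presentation of $M_T$ in the $j>n$ case, then assemble a presentation of $I_{\Lambda_j}(X,-)$ via Remark~\ref{equivalencecomma}. The only divergence is your closing paragraph about compatibility of the chosen presentations with the structure map $f$, which the paper does not discuss because it treats Remark~\ref{equivalencecomma} as a black box; your remark correctly identifies where the routine lifting argument lives, but it is not an extra step the paper's proof is missing.
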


\begin{proof}
	The proof is divided into two cases. 
	
	($1\leq j \leq n$). By Lemma  \ref{lem5}  and Theorem  \ref{Mar1}, there exist  exact sequences 
	\begin{eqnarray*}
		(U'_{j-1},-)\rightarrow  (U'_{j},-)\rightarrow  I_{\mathcal{U}_j}(U,-) \rightarrow 0,\\
		(U''_{j-1},-)\rightarrow (U''_{j},-)\rightarrow   M_T^{[j]}(U,-) \rightarrow   0,
         \end{eqnarray*}
         with  $U'_{j}, U''_{j}\in \mathcal{U}_j$, $U'_{j-1}, U''_{j-1}\in \mathcal{U}_{j-1}$.  Thus, we have an exact sequence  
	$$(U_{j-1},-)\rightarrow (U_{j},-)\rightarrow M_T^{[j]} \coprod I_{\mathcal{U}_j}(U,-)\cong I_{\Lambda_j}^{(2)}(X,-)\rightarrow 0,$$ with $U_{j-1}=U'_{j-1}\coprod U''_{j-1}\in\mathcal U_{j-1}$ and 
	$U_{j}=U'_{j}\coprod U''_{j}\in\mathcal U_{j}$.  It follows that there exists a exact sequence of $\Lambda$-modules
   \[
\left( \left(\begin{smallmatrix} 0 & 0 \\ M & U_{j-1} \end{smallmatrix}\right) , - \right) \rightarrow  \left( \left(\begin{smallmatrix} 0 & 0 \\ M & U_{j} \end{smallmatrix}\right), - \right) \rightarrow  \mathfrak{f}\left((I_{\Lambda_j}^{(1)},f,I_{\Lambda_j}^{(2)})\right) \rightarrow  0,
  \]
  with  $\mathfrak{f}((I_{\Lambda_j}^{(1)},f,I_{\Lambda_j}^{(2)})) \cong I_{\Lambda_j}(X,-)$, $\left(\begin{smallmatrix} 0 & 0 \\ M & U_{j-1} \end{smallmatrix}\right) \in \Lambda_{j-1}$ and  $\left(\begin{smallmatrix} 0 & 0 \\ M & U_{j} \end{smallmatrix}\right) \in \Lambda_j$. 

  \bigskip 

   ($j>n$).  Since $M_T\in\mathcal{F}(\Delta)$, $M_T$ is a finitely presented $\mathcal U$-module, then $M_T\coprod \mathcal U(U,-)\cong  I_{\Lambda_j}^{(2)}(X,-)$ is  finitely presented: an  exact sequence of $\mathcal U$-modules $(U'',-) \rightarrow  (U',-) \rightarrow  I_{\Lambda_j}^{(2)}(X,-)\rightarrow 0 $ exists. On the other hand, an exact sequence of $\mathcal T$-modules exists:   \[
  (T_{j-n-1},-)\rightarrow  (T_{j-n},-)\rightarrow I_{\mathcal{T}_{j-n}}(T,-) \rightarrow 0, 
   \]
   with $T_{j-n}\in \mathcal{T}_{j-n}$ and $T_{j-n-1}\in \mathcal{T}_{j-n-1}$. Thus, we get an exact sequence
   \[
   \begin{tikzcd}
   \left( \left(\begin{smallmatrix} T_{j-n-1} & 0 \\ M & U'' \end{smallmatrix}\right) , - \right) \rightarrow  \left( \left(\begin{smallmatrix} T_{j-n} & 0 \\ M & U' \end{smallmatrix}\right), - \right) \rightarrow  \mathfrak{f}\left((I_{\Lambda_j}^{(1)},f,I_{\Lambda_j}^{(2)})\right) \rightarrow  0, 
   \end{tikzcd}
   \]
   $\mathfrak{f}((I_{\Lambda_j}^{(1)},f,I_{\Lambda_j}^{(2)}))\cong I_{\Lambda_j}(X,-)$, $\left(\begin{smallmatrix} T_{j-n-1} & 0 \\ M & U'' \end{smallmatrix}\right) \in \Lambda_{j-1}$ and  $\left(\begin{smallmatrix} T_{j-n} & 0 \\ M & U' \end{smallmatrix}\right) \in \Lambda_j$.
\end{proof}

\bigskip

\begin{proof}[Proof of of Theorem \ref{quasiprinc} ]
It follows from Propositions \ref{onequasi} and  \ref{twoquasi}.
\end{proof}

\subsection{ The standard modules in $\mathrm{Mod}\ \Lambda$ and $\mathcal{F}(_\Lambda \Delta)$.}

Assume that $\mathcal{T}$ and $\mathcal{U}$ are $\mathrm{Hom}$-finite Krull-Schmidt and
quasi-hereditary $K-$categories with respect to filtrations $\{\mathcal U_j\}_{0\le j\le n}$ and  $\{\mathcal T_j\}_{j\ge 0}$, respectively. If in addition $M: \mathcal U\otimes_K\mathcal T^{op}\rightarrow \mathrm{mod}\ K$ is a functor such that $M_T=M(-,T):\mathrm{Mod}\ \mathcal U\rightarrow \mathrm{mod}\ K$ is finitely presented $\mathcal U$-module, then by Theorem \ref{quasiprinc}, the triangular matrix category $\Lambda=(\begin{smallmatrix} \mathcal T&0\\ M&\mathcal U\end{smallmatrix})$ is a quasi-hereditary $K$-category with respect to some filtration $\{\Lambda_j\}_{j\ge 0}$. In this part, we study the relation between the full standard subcategories $_\mathcal U\Delta$, $_\mathcal T\Delta$, and $_\Lambda\Delta$ of $\mathrm{Mod}\ \mathcal U$,   $\mathrm{Mod}\ \mathcal T$ and  $\mathrm{Mod}\ \Lambda$, respectively. More concretely, we will show in Theorem \ref{filt1}, by using the notation of Remark \ref{equivalencecomma},  that 
   $$\mathcal{ F}_f(_{\Lambda}\Delta)=\{(F^{(1)}, f, F^{(2)}): F^{(1)}\in\mathcal F_f(_{\mathcal T}\Delta) \text { and } F^{(2)}\in\mathcal F_f(_{\mathcal U}\Delta)\}; $$
see \cite[Theorem 3.1]{Zhu}.

\bigskip

Regardless, we need the following result.
 
 \begin{pro}\label{standard1}
	The functor $\mathfrak{f}:(\mathrm{Mod} \ \mathcal{T}, \mathbb{G}(\mathrm{Mod}\ \mathcal{U})) \rightarrow \mathrm{Mod}\ \Lambda$ induce equivalences of full subcategories:
	\[
	(0,0,_{\mathcal{U}}\Delta(j)) \longleftrightarrow _{\Lambda}\Delta(j), \hspace{.3cm} \text{if} \hspace{.2cm} 1\leq j \leq n, \hspace{.3cm} \text{and},
	\]
	\[
	(_{\mathcal{T}}\Delta(j-n),0,0) \longleftrightarrow _{\Lambda}\Delta(j), \hspace{.3cm} \text{if} \hspace{.2cm} j>n. \hspace{1.1cm}
	\]
\end{pro}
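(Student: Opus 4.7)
The plan is to unwind the definition of the standard subcategory, namely ${}_{\Lambda}\Delta_{E}(j)=\Lambda(E,-)/I_{\Lambda_{j-1}}(E,-)$ for $E\in\mathrm{Ind}\,\Lambda_{j}-\mathrm{Ind}\,\Lambda_{j-1}$, and compute both the numerator and the denominator under the equivalence $\mathfrak{f}$ of Remark \ref{equivalencecomma}. Throughout I shall identify each $\Lambda$-module $C$ with its image $(C_{1},f,C_{2})$ in the comma category $(\mathrm{Mod}\,\mathcal{T},\mathbb{G}(\mathrm{Mod}\,\mathcal{U}))$.

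Case $1\le j\le n$. Here an indecomposable of $\Lambda_{j}-\Lambda_{j-1}$ has the form $E=\left(\begin{smallmatrix}0&0\\ M&U\end{smallmatrix}\right)$ with $U\in\mathrm{Ind}\,\mathcal{U}_{j}-\mathrm{Ind}\,\mathcal{U}_{j-1}$. A direct evaluation of $\Lambda(E,-)$ on the generating objects $\left(\begin{smallmatrix}T'&0\\ M&0\end{smallmatrix}\right)$ and $\left(\begin{smallmatrix}0&0\\ M&U'\end{smallmatrix}\right)$ shows that $\Lambda(E,-)$ corresponds to the triple $(0,0,\mathcal{U}(U,-))$. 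Applying Lemma \ref{ILambda1}(i) with $T=0$ yields $I_{\Lambda_{j-1}}^{(1)}(E,-)=0$ and $I_{\Lambda_{j-1}}^{(2)}(E,-)\cong M_{0}^{[j-1]}\coprod I_{\mathcal{U}_{j-1}}(U,-)=I_{\mathcal{U}_{j-1}}(U,-)$ because $M_{0}=0$. Since $\mathfrak{f}$ is an equivalence and is exact (being part of an equivalence between abelian categories), taking quotients in the comma category entry-wise gives
\[
{}_{\Lambda}\Delta_{E}(j)\cong \mathfrak{f}\bigl(0,\,0,\,\mathcal{U}(U,-)/I_{\mathcal{U}_{j-1}}(U,-)\bigr)=\mathfrak{f}\bigl(0,0,{}_{\mathcal{U}}\Delta_{U}(j)\bigr).
\]
Since the assignment $U\mapsto \left(\begin{smallmatrix}0&0\\ M&U\end{smallmatrix}\right)$ is a bijection between $\mathrm{Ind}\,\mathcal{U}_{j}-\mathrm{Ind}\,\mathcal{U}_{j-1}$ and $\mathrm{Ind}\,\Lambda_{j}-\mathrm{Ind}\,\Lambda_{j-1}$, we obtain the claimed equivalence $(0,0,{}_{\mathcal{U}}\Delta(j))\longleftrightarrow {}_{\Lambda}\Delta(j)$.

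Case $j>n$. Now $E=\left(\begin{smallmatrix}T&0\\ M&0\end{smallmatrix}\right)$ with $T\in\mathrm{Ind}\,\mathcal{T}_{j-n}-\mathrm{Ind}\,\mathcal{T}_{j-n-1}$. Evaluating as above, $\Lambda(E,-)$ corresponds to $(\mathcal{T}(T,-),f_{E},M_{T})$ for a canonical structure morphism $f_{E}$. The computation of $I_{\Lambda_{j-1}}(E,-)$ now splits in two subcases. If $j=n+1$, then $j-1=n$ falls in Lemma \ref{ILambda1}(i), so $I_{\Lambda_{n}}(E,-)$ corresponds to $(0,0,M_{T}^{[n]}\coprod I_{\mathcal{U}_{n}}(0,-))=(0,0,M_{T}^{[n]})$; since $M_{T}\in\mathcal{F}({}_{\mathcal{U}}\Delta)$ and the filtration $\{\mathcal{U}_{j}\}_{0\le j\le n}$ has length $n$, one has $M_{T}=M_{T}^{[n]}$, so the denominator coincides with $(0,0,M_{T})$. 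If $j>n+1$, Lemma \ref{ILambda1}(ii) applies and gives $I_{\Lambda_{j-1}}(E,-)\cong\mathfrak{f}(I_{\mathcal{T}_{j-n-1}}(T,-),g_{E},M_{T})$ for a suitable $g_{E}$. In either subcase the second component of the quotient is trivial and so the induced morphism is zero, yielding
\[
{}_{\Lambda}\Delta_{E}(j)\cong \mathfrak{f}\bigl(\mathcal{T}(T,-)/I_{\mathcal{T}_{j-n-1}}(T,-),\,0,\,0\bigr)=\mathfrak{f}\bigl({}_{\mathcal{T}}\Delta_{T}(j-n),0,0\bigr).
\]
Again the bijection $T\mapsto\left(\begin{smallmatrix}T&0\\ M&0\end{smallmatrix}\right)$ between $\mathrm{Ind}\,\mathcal{T}_{j-n}-\mathrm{Ind}\,\mathcal{T}_{j-n-1}$ and $\mathrm{Ind}\,\Lambda_{j}-\mathrm{Ind}\,\Lambda_{j-1}$ yields the desired equivalence $({}_{\mathcal{T}}\Delta(j-n),0,0)\longleftrightarrow {}_{\Lambda}\Delta(j)$.

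The main obstacle, to my mind, is the borderline case $j=n+1$: one must recognise that because the $\mathcal{U}$-filtration is finite of length $n$ and $M_{T}$ is ${}_{\mathcal{U}}\Delta$-filtered, the trace filtration of $M_{T}$ stabilises at step $n$, giving $M_{T}=M_{T}^{[n]}$. Everything else reduces to a careful bookkeeping of Lemma \ref{ILambda1} together with exactness of $\mathfrak{f}$.
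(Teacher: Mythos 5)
Your proof is correct and follows essentially the same strategy as the paper's: identify ${}_{\Lambda}\Delta_{E}(j)$ with a triple in the comma category and compute both the numerator $\Lambda(E,-)$ and the denominator $I_{\Lambda_{j-1}}(E,-)$ in each component. The paper carries out the computation pointwise (evaluating at $T'$ and at $U'$), while you invoke Lemma \ref{ILambda1} and exactness of $\mathfrak{f}$ to work at the level of functors; these are the same calculation viewed from two angles. The one genuine difference is that you explicitly split the case $j>n$ into the subcases $j=n+1$ and $j>n+1$ when evaluating $I_{\Lambda_{j-1}}$, since the two branches of Lemma \ref{ILambda1} apply. The paper writes a single formula $I_{\mathcal{T}_{j-n-1}}(T,-)$ covering both, which is correct but hides the check that for $j=n+1$ the denominator $I_{\Lambda_n}(E,-)$ reduces to $(0,0,M_T^{[n]}) = (0,0,M_T)$ and that $I_{\mathcal{T}_0}=0$ on the first coordinate. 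Your explicit treatment of this borderline case is a small added piece of rigor, and your observation that $M_T=M_T^{[n]}$ follows from $\mathcal{U}_n=\mathcal{U}$ (finiteness of the $\mathcal{U}$-filtration, which is also what guarantees $M_T$ finitely generated) is the right justification. In short: same approach, with slightly more careful bookkeeping on your side.
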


\begin{proof}
  First, we identify $_{\Lambda}\Delta_E=\frac{\Lambda(E,-)}{I_{\Lambda_{j-1}}(E,-)}$, $E \in \mathrm{Ind} \hspace{.1cm} \Lambda_j - \mathrm{Ind} \hspace{.1cm} \Lambda _{j-1}$,   with a triple $( \Delta_E^{(1)}, g, \Delta_E^{(2)})$ with 
 $\Delta_E^{(1)}:\mathcal{T}\rightarrow \mathbf{Ab}$ and $\Delta_E^{(2)}:\mathcal{U}\rightarrow \mathbf{Ab}$.
 
 \bigskip
 
 ($1\leq j \leq n$). Let $T'\in \mathcal{T}$ and $E= \left(\begin{smallmatrix} 0 & 0 \\ M & U \end{smallmatrix}\right)$ with $U \in \mathrm{Ind} \hspace{.1cm} \mathcal{U}_j - \mathrm{Ind} \hspace{.1cm} \mathcal{U}_{j-1}$. Then
\[
_{\Lambda}\Delta_E^{(1)}(T')=\frac{\Lambda \left( \left(\begin{smallmatrix} 0 & 0 \\ M & U \end{smallmatrix}\right), \left(\begin{smallmatrix} T' & 0 \\ M & 0 \end{smallmatrix}\right) \right)}{I_{\Lambda_{j-1}}\left( \left(\begin{smallmatrix} 0 & 0 \\ M & U \end{smallmatrix}\right), \left(\begin{smallmatrix} T' & 0 \\ M & 0 \end{smallmatrix}\right) \right)}\cong 0.
\]
On the other hand, if  $U' \in \mathcal{U}$, we get 
\[
_{\Lambda}\Delta_E^{(2)}(U')=\frac{\Lambda \left( \left(\begin{smallmatrix} 0 & 0 \\ M & U \end{smallmatrix}\right), \left(\begin{smallmatrix} 0 & 0 \\ M & U \end{smallmatrix}\right) \right)}{I_{\Lambda_{j-1}}\left( \left(\begin{smallmatrix} 0 & 0 \\ M & U \end{smallmatrix}\right), \left(\begin{smallmatrix} 0 & 0 \\ M & U \end{smallmatrix}\right) \right)}\cong \frac{\mathcal{U}(U,U')}{I_{\mathcal{U}_{j-1}}(U,U')}.
\]
In this way,  $_{\Lambda}\Delta_E^{(1)}\cong 0$ and $_{\Lambda}\Delta_E^{(2)}\cong \frac{\mathcal{U}(U,-)}{I_{\mathcal{U}_{j-1}}(U,-)}=_{\mathcal U}\Delta_U$, with $U \in \mathrm{Ind} \hspace{.1cm} \mathcal{U}_j - \mathrm{Ind} \hspace{.1cm} \mathcal{U}_{j-1}$.

($j> n$).  Let $T' \in \mathcal{T}$ and $E= \left(\begin{smallmatrix} T & 0 \\ M & 0 \end{smallmatrix}\right)$  with  $T \in \mathrm{Ind} \hspace{.1cm} \mathcal{T}_{j-n} - \mathrm{Ind} \hspace{.1cm} \mathcal{T}_{j-n-1}$. Thus
\[
_{\Lambda}\Delta_E^{(1)}(T')=\frac{\Lambda \left( \left(\begin{smallmatrix} T & 0 \\ M & 0 \end{smallmatrix}\right), \left(\begin{smallmatrix} T' & 0 \\ M & 0 \end{smallmatrix}\right) \right)}{I_{\Lambda_{j-1}}\left( \left(\begin{smallmatrix} T & 0 \\ M & 0 \end{smallmatrix}\right), \left(\begin{smallmatrix} T' & 0 \\ M & 0 \end{smallmatrix}\right) \right)}\cong \frac{\mathcal{T}(T,T')}{I_{\mathcal{T}_{j-n-1}}(T,T')}.
\]
If $U' \in \mathcal{U}$, we obtain
\[
_{\Lambda}\Delta_E^{(2)}(U')=\frac{\Lambda \left( \left(\begin{smallmatrix} T & 0 \\ M & 0 \end{smallmatrix}\right), \left(\begin{smallmatrix} 0 & 0 \\ M & U' \end{smallmatrix}\right) \right)}{I_{\Lambda_{j-1}}\left( \left(\begin{smallmatrix} T & 0 \\ M & 0 \end{smallmatrix}\right), \left(\begin{smallmatrix} 0 & 0 \\ M & U' \end{smallmatrix}\right) \right)}\cong 0.
\]
Therefore $_{\Lambda}\Delta_E^{(1)}\cong \frac{\mathcal{T}(T,-)}{I_{\mathcal{T}_{j-n-1}}(T,-)} \cong _{\mathcal T}\Delta _T$, with $T \in \mathrm{Ind} \hspace{.1cm} \mathcal{T}_{n-j} - \mathrm{Ind} \hspace{.1cm} \mathcal{T}_{n-j-1}$  and $_{\Lambda}\Delta_E^{(2)}\cong 0$.

\end{proof}

\begin{teo}\label{filt1}
Let $F=(F^{(1)}, f, F^{(2)})\in\mathrm{Mod}\ \Lambda$, and consider its trace filtration  $\{F^{[j]} \}_{j\ge 0}= \{(F^{[j]})^{(1)}, f^{[j]}, (F^{[j]})^{(2)})\} _{j\ge 0}$  with respect to $\{\Lambda_j\}$. Then: 
\begin{itemize}
\item [(i)] $(F^{[j]})^{(1)}\cong 0$,  if $0\le j\le n$, and $(F^{[j]})^{(2)}\cong F^{(2)}$, if $j>1$.
\item [(ii)] If $F=(F^{(1)},f,F^{(2)})\in\mathcal{ F}(_{\Lambda}\Delta)$ then  $F^{(1)}\in\mathcal F(_{\mathcal T}\Delta)$ and $F^{(2)}\in\mathcal F(_{\mathcal U}\Delta)$.
\item[(iii)] $\mathcal{ F}_f(_{\Lambda}\Delta)=\{(F^{(1)}, f, F^{(2)}): F^{(1)}\in\mathcal F_f(_{\mathcal T}\Delta) \text { and } F^{(2)}\in\mathcal F_f(_{\mathcal U}\Delta)\}$.
\end{itemize}
\end{teo}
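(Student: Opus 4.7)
For part (i), I would compute the components of the trace $F^{[j]}=\mathrm{Tr}_{\{\Lambda(E,-)\}_{E\in\Lambda_j}}F$ directly, using the definition of the equivalence $\mathfrak{f}$ to read off $(F^{[j]})^{(1)}(T')=F^{[j]}\!\left(\left(\begin{smallmatrix}T'&0\\M&0\end{smallmatrix}\right)\right)$ and $(F^{[j]})^{(2)}(U')=F^{[j]}\!\left(\left(\begin{smallmatrix}0&0\\M&U'\end{smallmatrix}\right)\right)$. For $0\le j\le n$ every object of $\Lambda_j$ has the shape $\left(\begin{smallmatrix}0&0\\M&U\end{smallmatrix}\right)$ with $U\in\mathcal{U}_j$, and $\Lambda\!\left(\left(\begin{smallmatrix}0&0\\M&U\end{smallmatrix}\right),\left(\begin{smallmatrix}T'&0\\M&0\end{smallmatrix}\right)\right)=0$, so no element of $F^{(1)}(T')$ can be in the trace, giving $(F^{[j]})^{(1)}=0$. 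For $j>n$ (in fact $j\ge n$, since $\mathcal{U}=\mathcal{U}_n$), the category $\Lambda_j$ contains every $\left(\begin{smallmatrix}0&0\\M&U\end{smallmatrix}\right)$ with $U\in\mathcal{U}$ (take the zero object in $\mathcal{T}_{j-n}$), and the identity morphism $1_{U'}$ shows that every element of $F^{(2)}(U')$ is a trace element; hence $(F^{[j]})^{(2)}=F^{(2)}$.

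For part (ii), the assumption $F\in\mathcal{F}(_\Lambda\Delta)$ provides that every successive trace quotient $F^{[j]}/F^{[j-1]}$ is a coproduct of standard $\Lambda$-modules $_\Lambda\Delta(j)$. By Proposition \ref{standard1} these quotients take the concrete forms $(0,0,X_j)$ with $X_j\in {}_\mathcal{U}\Delta(j)^{\amalg}$ when $1\le j\le n$, and $(Y_{j-n},0,0)$ with $Y_{j-n}\in {}_\mathcal{T}\Delta(j-n)^{\amalg}$ when $j>n$. Applying the (exact) component functors $(-)^{(1)}$ and $(-)^{(2)}$ to the trace filtration, part (i) shows that the induced filtration on $F^{(2)}$ is $0=(F^{[0]})^{(2)}\subset\cdots\subset (F^{[n]})^{(2)}=F^{(2)}$ with quotients in ${}_\mathcal{U}\Delta(j)^{\amalg}$, and the induced filtration on $F^{(1)}$ starts at zero until level $n$ and then has quotients in ${}_\mathcal{T}\Delta(k)^{\amalg}$ for $k\ge 1$. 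Thus $F^{(2)}\in\mathcal{F}(_\mathcal{U}\Delta)$ and $F^{(1)}\in\mathcal{F}(_\mathcal{T}\Delta)$.

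For part (iii), the inclusion $\subseteq$ is just (ii) restricted to finite filtrations. For the reverse inclusion, given $F=(F^{(1)},f,F^{(2)})$ with $F^{(1)}\in\mathcal{F}_f(_\mathcal{T}\Delta)$ and $F^{(2)}\in\mathcal{F}_f(_\mathcal{U}\Delta)$, I would use the short exact sequence
\[
0\longrightarrow (0,0,F^{(2)})\longrightarrow (F^{(1)},f,F^{(2)})\longrightarrow (F^{(1)},0,0)\longrightarrow 0
\]
in the comma category, where the inclusion is $(0,1_{F^{(2)}})$ and the quotient is $(1_{F^{(1)}},0)$. Each end lies in $\mathcal{F}_f(_\Lambda\Delta)$: a finite ${}_\mathcal{U}\Delta$-filtration $0=G_0\subset\cdots\subset G_k=F^{(2)}$ lifts via $\mathfrak{f}$ to the filtration $(0,0,G_i)$ of $(0,0,F^{(2)})$, whose quotients $(0,0,G_i/G_{i-1})$ are in ${}_\Lambda\Delta(j)^{\amalg}$ for $1\le j\le n$ by Proposition \ref{standard1}; the argument for $(F^{(1)},0,0)$ is symmetric using the quasi-inverse identification ${}_\Lambda\Delta(j)\cong({}_\mathcal{T}\Delta(j-n),0,0)$ for $j>n$. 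Finally, Remark \ref{remark1} (closure of $\mathcal{F}_f$ under extensions) applied to the displayed short exact sequence concludes $F\in\mathcal{F}_f(_\Lambda\Delta)$.

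The main technical point is the verification that the short exact sequence above is indeed exact in the comma category $(\mathrm{Mod}\ \mathcal{T},\mathbb{G}(\mathrm{Mod}\ \mathcal{U}))$ and that lifting the ${}_\mathcal{U}\Delta$- (resp.\ ${}_\mathcal{T}\Delta$-) filtration of $F^{(2)}$ (resp.\ $F^{(1)}$) across $\mathfrak{f}$ yields genuine subobjects of $(0,0,F^{(2)})$ (resp.\ $(F^{(1)},0,0)$) whose successive quotients are precisely the $\Lambda$-standard modules predicted by Proposition \ref{standard1}; once this bookkeeping is in place the rest is formal.
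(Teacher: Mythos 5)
Your proof is correct and, in part (i), cleaner than the paper's. The paper proves (i) by first fixing a presentation $(X',-)\rightarrow(X,-)\rightarrow F\rightarrow 0$ of $F$, restricting it through the ideals $I_{\Lambda_j}(-,-)$ and then applying Lemma \ref{ILambda1} to read off the components of $I_{\Lambda_j}(X,-)$; your version computes the trace componentwise directly from the definition, using that $\Lambda\left(\left(\begin{smallmatrix}0&0\\M&U\end{smallmatrix}\right),\left(\begin{smallmatrix}T'&0\\M&0\end{smallmatrix}\right)\right)=0$ when $j\le n$, and that for $j\ge n$ every $\left(\begin{smallmatrix}0&0\\M&U'\end{smallmatrix}\right)$ already lies in $\Lambda_j$ so its identity morphism makes the whole of $F^{(2)}(U')$ a trace element. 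This sidesteps the implicit finite-presentation assumption in the paper's argument, which the statement of (i) does not impose, and it explains why the bound ``$j>1$'' in the published statement should read ``$j\ge n$''. Parts (ii) and (iii) follow the same outline as the paper. One noteworthy point: in (iii) you use the short exact sequence
\[
0\rightarrow (0,0,F^{(2)})\rightarrow (F^{(1)},f,F^{(2)})\rightarrow (F^{(1)},0,0)\rightarrow 0,
\]
and this is the correct orientation. The sequence printed in the paper,
\[
0\rightarrow (F^{(1)},0,0)\rightarrow (F^{(1)},f,F^{(2)})\rightarrow (0,0,F^{(2)})\rightarrow 0,
\]
does not consist of morphisms of the comma category unless $f=0$: a map $(1_{F^{(1)}},0)\colon(F^{(1)},0,0)\rightarrow(F^{(1)},f,F^{(2)})$ would force $f\circ 1_{F^{(1)}}=\mathbb{G}(0)\circ 0=0$. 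So you have silently corrected the paper here, and the rest of your bookkeeping (the filtrations $(0,0,G_i)$ and $(H_i,0,0)$ being genuine $\Lambda$-subobjects, and their quotients being standard via Proposition \ref{standard1}) is sound.
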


\begin{proof}
(i) Assume we have an exact sequence of $\Lambda$-modules 
(*) $(X',-)\rightarrow (X,-) \rightarrow  F\rightarrow 0$ with  $X'=\left(\begin{smallmatrix} T' & 0 \\ M & U' \end{smallmatrix}\right)$ and  $X=\left(\begin{smallmatrix} T & 0 \\ M & U \end{smallmatrix}\right)$. Thus, we get an exact sequence
(** ) $I_{\Lambda_j}(X',-)\rightarrow I_{\Lambda_j}(X,-)\rightarrow  F^{[j]}\rightarrow 0$. First, we  identify  (*) with  the exact sequence
$$\xymatrix{ & \big (\mathcal{T}(T',-), g',  M_{T'}\amalg \mathcal U(U',-)\big) \ar `[ld] `[] `[ld] |{} `[] [ldr] & \\
&\big (\mathcal{T}(T,-), g,  M_{T}\amalg \mathcal U(U,-))\ar[r] & \big((F^{(1)},f,F^{(2)}\big )\ar[r] &0.   }$$

In particular we have the exact sequence
 \begin{equation}\label{sePROJ}
  M_{T'}\amalg \mathcal{U}(U',-)\rightarrow M_{T}\amalg \mathcal{U}(U,-)\rightarrow F^{(2)}\rightarrow 0.
 \end{equation}
 
Secondly, we identify the exact sequence (**) with the exact sequence  $$\xymatrix{ & \big (I_{{ \Lambda}_{j}}^{(1)}(X',-), h',  I_{{ \Lambda}_{j}}^{(2)}(X',-)\big) \ar `[ld] `[] `[ld] |{} `[] [ldr] & \\
&\big (I_{{ \Lambda}_j}^{(1)}(X,-), h,  I_{{ \Lambda}_j}^{(2)}(X,-)\big)\ar[r] & \big((F^{[j]})^{(1)},f^{[j]},(F^{[j]})^{(2)}\big )\ar[r] &0.   }$$ We then  have exact sequences 
$$ I_{{ \Lambda}_j}^{(k)}(X',-)\rightarrow I_{{ \Lambda}_j}^{(k)}(X,-)\rightarrow (F^{[j]})^{(k)}\rightarrow 0,\ k=1,2 \ .$$

By Lemma \ref{ILambda1} we have that  $ I_{{ \Lambda}_j}^{(1)}(X',-)\cong 0$ and $ I_{{ \Lambda}_j}^{(1)}(X,-) \cong 0$ if $0\le j\le n$; therefore,  $(F^{[j]})^{(1)}\cong 0$. On the other hand, if $j>n$ we have 
$ I_{{ \Lambda}_j}^{(2)}(X',-)\cong M_{T'}\amalg \mathcal U(U',-)$ and $ I_{{ \Lambda}_j}^{(2)}(X,-)\cong M_{T}\amalg \mathcal U(U,-)$. Thus,  by (\ref{sePROJ}), we get $ (F^{[j]})^{(2)}\cong F^{(2)}$ if $j>n$. 

In this way, if $1\le j\le n$ we obtain that  $F^{[j]}/F^{[j-1]}$ is a sum of copies of elements in $_{\Lambda} \Delta(j)$ and
\[
\frac{F^{[j]}}{F^{[j-1]}}\cong
\begin{cases}
\left(0,0, (F^{[j]})^{(2)}/(F^{[j-1]})^{(2)}\right), \text{ if $1\le j\le n$};\\
\left((F^{[j]})^{(1)}/(F^{[j-1]})^{(1)},0, 0\right), \text{ if $j>1$}.
\end{cases}
\]
(ii) follows from by Proposition \ref{standard1}.

(iii) Let $F=(F^{(1)},f,F^{(2)})\in\mathcal{F}_f(_\Lambda \Delta)$. By (ii),  it only remains to prove  that if  $F^{(1)}\in\mathcal{F}_f(_{\mathcal T}\Delta)$ and $F^{(2)} \in\mathcal{F}_f(_{\mathcal U}\Delta)$,  then   $F\in\mathcal{F}_f(_\Lambda \Delta)$. In fact,  the $\Lambda$-modules $(F^{(1)},0,0)$ and $(0,0,F^{(2)})$ are in  $\mathcal{F}_f(_{\Lambda}\Delta)$ by  Proposition \ref{standard1}.   It follows that we have a short exact sequence 
\[
0\rightarrow  (F^{(1)},0,0)\rightarrow (F^{(1)},f,F^{(2)})\rightarrow( 0,0,F^{(2)})\rightarrow 0 \ .
\]
Thus, $ (F^{(1)},f,F^{(2)})$ is  in $\mathcal{F}_f(_{\Lambda}\Delta)$ since $\mathcal{F}_f(_{\Lambda}\Delta)$  is closed under extensions by Remark \ref{remark1}.
\end{proof}

We end this section continuing with Example  \ref{ex1}.

\begin{ex}
Consider the quivers $\small
R:
\begin{tikzcd}[cramped, sep=normal]
	1  \arrow[r, bend left, "\alpha_1", pos=0.46] & 2 \arrow[r, bend left, "\alpha_2"] \arrow[l, bend left, "\beta_1"] & 3 \arrow[r, bend left, pos=0.44, "\alpha_3"] \arrow[l, bend left, "\beta_2"] & \arrow[l, bend left, pos=0.56, "\beta_3"] & \arrow[l,phantom,"\cdots"]  
\end{tikzcd}
$ and $\small
Q:
\begin{tikzcd}[cramped, sep=normal]
1'  & 2' \arrow[r,"\tiny{\gamma_2}"] \arrow[l,"\gamma_1"']& 3' & 4' \arrow[r,"\gamma_4"] \arrow[l,"\gamma_3"'] & 5' \arrow[r,phantom,"\cdots"] &  \hspace{.01cm} .  
\end{tikzcd}
$
Let  $\mathcal U=K\mathcal{Q}$ and $\mathcal T=K\mathcal{R}/\mathcal J$ be the path categories of the above quivers, where $\mathcal J$ is the ideal in $K\mathcal{R}$ generated by the set of relations 
\begin{equation}\label{relation2}
\{\beta_1\alpha_1 \text{ and }  \alpha_{t+1}\alpha_t, \beta_{t}\beta_{t+1}, \alpha_t\beta_t-\beta_{t+1}\alpha_{t+1}, t\ge 1\},
\end{equation}

First, we see that $\mathcal T$ and $\mathcal U$ are quasi-hereditary categories. 

Set $\mathcal T_0=\{0\}$, and let $\mathcal{T}_j=\mathrm{add} \{ t: 1\le t\le j\}$, for $j\ge 1$. Therefore,  $\{0\}=\mathcal T_0\subset \mathcal T_1\subset \cdots$ is a filtration of $\mathcal T$ into additively closed subcategories. (i) It is clear that $\mathrm{rad}_\mathcal T(1,1)=0$ because $\beta_1\alpha_1=0$. Since  $\mathrm{Ind} \mathcal T_j-\mathrm{Ind} \mathcal T_{j-1}=\{j\}$ for all $j\ge 1$, we have that $\mathrm{rad}_\mathcal T(j,j)=(\beta_j\alpha_j)=
(\alpha_{j-1}\beta_{j-1})=I_{\mathcal T_{j-1}}(j,j)$. (ii) $I_{\mathcal T_1}(1,-)\cong(1,-)$, $I_{\mathcal T_1}(2,-)\cong (1,-)$ and $I_{\mathcal T_1}(j,-)=0$, if $j\ge 3$. For $j\ge 2$,  we can readily check that there exists an exact sequence
$ 0\rightarrow I_{\mathcal T_{j-1}}(j,-)\rightarrow \mathcal T(j,-) \rightarrow I_{\mathcal T_{j}}(j+1,-)\rightarrow 0$ and 
$I_{\mathcal T_j}(j+t,-)=0$ if $t\ge 2$.

\[
\begin{tikzcd}[cramped, row sep=3.1em, column sep=2.8em]
	&&&& 0 \arrow[d] \\
	\hspace{.01cm} & \arrow[l,phantom,"\cdots"] K \arrow[r, bend left=22, "1"] \arrow[d,"1"'] & K \arrow[r, bend left=22] \arrow[d,"\tiny{\begin{pmatrix} 0 \\ 1 \end{pmatrix}}"', pos=0.46] \arrow[l, bend left=22, "1"'] & 0\arrow[l, bend left=22] \arrow[d] \arrow[r,phantom,"\cdots" description] & I_{\mathcal T_{j-1}}(j,-) \arrow[d]\\
	\hspace{.01cm} & \arrow[l,phantom,"\cdots"] K \arrow[r, bend left=22, "\tiny{\begin{pmatrix} 0 \\ 1 \end{pmatrix}}", pos=0.48] \arrow[d,"1"'] & K^2 \arrow[r, bend left=22, "\tiny{\begin{pmatrix} 1 & 0 \end{pmatrix}}"] \arrow[d,"\tiny{\begin{pmatrix} 1 & 0 \end{pmatrix}}"', pos=0.66] \arrow[l, bend left=22, "\tiny{\begin{pmatrix} 1 & 0 \end{pmatrix}}"] & K \arrow[l, bend left=22, "\tiny{\begin{pmatrix} 0 \\ 1 \end{pmatrix}}"] \arrow[d,"1"']  \arrow[r,phantom,"\cdots \hspace{.5cm}" description] & \mathcal{T}(j,-) \arrow[d]\\
	\hspace{.01cm} & \arrow[l,phantom,"\cdots"] 0 \arrow[r, bend left=22] & K \arrow[r, bend left=22, "1"'] \arrow[l, bend left=22] & K\arrow[l, bend left=22, "0"]  \arrow[r,phantom,"\cdots" description] & I_{\mathcal T_j}(j+1,-) \arrow[d]\\
	& j-1 & j & j+1  & 0		
\end{tikzcd}
\]

Set $\mathcal U_0=\{0\}$ and  $\mathcal{U}_1=\mathrm{add}\{ j' \in \mathbb{N}: \text{ $j$ is odd }\}$ and $\mathcal{U}_2=\mathrm{add}\{ j': j \in \mathbb{N}\}=K\mathcal Q$. Thus 
$K\mathcal Q$ is quasi-hereditary with respect to the finite  filtration $\{0\}=\mathcal U_0\subset \mathcal U_1\subset \mathcal U_2=K\mathcal Q$. The condition (i) clearly holds since $\mathrm{rad}_\mathcal U(E,E')=I_{\mathcal U_{j-1}}(E,E')=0$ for all pairs $E,E'\in\mathrm{Ind}\ \mathcal {U}_j-\mathrm{Ind}\ \mathcal {U}_{j-1}$ and $j=1,2$. On the other hand, $I_{\mathcal U_1}(j,-)=(j,-)$ if $j$ is odd and $I_{\mathcal U_1}(j,-)\cong (j-1,-)\oplus  (j+1,-)$ if $j$ is even. 

Finally,  the functor $M$ given in Example \ref{ex1}  satisfies $M_T:K\mathcal Q\rightarrow \mathrm{mod}\ K$ is projective for all $T$ since
$M_{1}\cong\mathcal U(1,-)^2$, $M_2\cong \mathcal U(1,-)$, and $M_t\cong 0$,  for all $t> 2$,  which are all  in $\mathcal F(_\mathcal U\Delta)$ because $\mathcal U$ is quasi-hereditary.

In this way,  the matrix category
$\left(
\begin{smallmatrix}
\mathcal T&0\\
M&\mathcal U
\end{smallmatrix}
\right)$ is equivalent to the path category of the quiver $Q''$ modulo the ideal generated by the set of relations  (\ref{relation2}) and is quasi-hereditary with respect to the filtration $\{0\}=\Lambda _0\subset \Lambda _1\subset \Lambda _2\subset \Lambda_3\cdots$, where 
$\Lambda_1=\mathcal U_1$, $\Lambda_2=\mathcal U_2$ and $\Lambda_{j+2}=
\mathrm{add}\left ( \{ j': j \in \mathbb{N}\}\cup \{ t\in\mathbb N: 1\le t\le j\} \right): $
\[
\small \begin{tikzcd}[cramped, sep=normal]
  Q'': \ \ \   \arrow[r,phantom,"\cdots"] &   \arrow[r,"\gamma_3"'] & 3' & 2' \arrow[l,"\tiny{\gamma_2}"] \arrow[r,"\gamma_1"']  & 1' &
	1 \arrow[l,"\varphi"]\arrow[r, bend left, "\alpha_1", pos=0.4] &  \arrow[r, bend left, "\alpha_2"] \arrow[l, bend left, "\beta_1"] & 3 \arrow[r, bend left, "\alpha_3"] \arrow[l, bend left, "\beta_2", pos=0.45] & \arrow[l, bend left, "\beta_3"] & \arrow[l,phantom,"\cdots"]   .
\end{tikzcd}
\]
\end{ex}

\section{One-Point Extensions by Projectives and Classical Tilting}
Let us consider $Q=(Q_0,Q_1)$ that is a  strongly locally  finite quiver and   $*$ is a source in $Q$.  Let $Q'=(Q'_0,Q_1')$ be the quiver obtained by removing the vertex $*$  from $Q$ . Let  $I\subset KQ$,  and set  $\mathcal C=K\mathcal Q/\mathcal I$. 
We then have two full subcategories of $\mathcal C$; 
$\mathcal U=\mathrm{add} \ Q_0'$ and $\mathcal T=\mathrm{add} \{* \}$.   Let
$M:\mathcal U\otimes \mathcal T^{op}\rightarrow \mathrm{mod} \ K$ be the additive functor given by $M(u, *)=\mathcal C(*,u)$,  where 
$(u,*)\in Q_0'\times \{*\}$, and given
 $(g\otimes f)\in  \mathcal  U(U,U')\otimes _K\mathcal T (*,*)
 =(\mathcal U\otimes _K\mathcal T^{op})\left ((U,*),((U',*)\right )$, 
 $M(g\otimes f): M(U,*)\rightarrow M(U',*)$  is defined by
$\mathcal C(f,g):\mathcal C(*,U)\rightarrow \mathcal C(*,U')$ for all $U,U'\in\mathcal U$.

 Consider the triangular matrix category
  $\Lambda=\left(\begin{smallmatrix}
 \mathcal T&0\\
 M&\mathcal U
 \end{smallmatrix}\right )$. Let $X=\left (\begin{smallmatrix}
 T&0\\
 M&U
  \end{smallmatrix}\right) $, $X'=\left(\begin{smallmatrix}
 T'&0\\
 M&U'
  \end{smallmatrix}\right )$ and
  $X''=\left(\begin{smallmatrix}
 T''&0\\
 M&U'' 
  \end{smallmatrix}\right)$ be objects in $\Lambda$. The  set of morphisms 
  $\Lambda (X,X')= \left (\begin{smallmatrix}
 \mathcal T (T,T')&0\\
 M(U',T)&\mathcal U (U,U')
 \end{smallmatrix}\right )$  can be simply written as $\left(\begin{smallmatrix}
 \mathcal  C (T,T')&0\\
 \mathcal C (T,U')&\mathcal C (U,U')
 \end{smallmatrix}\right )$. Thus, the  composition  $\Lambda(X',X'')\times \Lambda(X,X')\rightarrow \Lambda(X,X'')$ in $\Lambda$ is given by the map   
 \begin{eqnarray*}
 \left ( \begin{smallmatrix}
 \mathcal  C (T',T'')&0\\
 \mathcal C (T',U'')&\mathcal C (U',U'')
 \end{smallmatrix}\right ) \times \left (\begin{smallmatrix}
 \mathcal  C (T,T')&0\\
 \mathcal C (T,U')&\mathcal C (U,U')
 \end{smallmatrix}\right )&\longrightarrow& \left ( \begin{smallmatrix}
 \mathcal  C (T,T'')&0\\
 \mathcal C (T,U'')&\mathcal C (U,U'')
 \end{smallmatrix}
  \right )\\
  {}\\
   \left( \left (\begin{smallmatrix}
   f'&0\\
 h'&g'
 \end{smallmatrix}\right ) , \left (\begin{smallmatrix}
   f&0\\
 h&g
 \end{smallmatrix}\right ) \right)&\longmapsto&  \left (\begin{smallmatrix}
   f'f&0\\
 h'f+g'h&g'g
 \end{smallmatrix}\right )
  \end{eqnarray*}
 
  \bigskip
  
 In this way, there is an isomorphism of categories $\mathcal C\cong \Lambda$. 
   \bigskip
   
  A pair of modules  $X$ and $Y$ is called \emph{orthogonal} if $\mathrm{Ext}^ j_\Lambda(X, Y) =\mathrm{Ext}^ j_\Lambda(Y, X)$ $= 0$ for all $j \ge 1$ and \emph{exceptional} if additionally $\mathrm{pd}_\Lambda X <\infty$ for all $X\in\mathrm{mod} \ \Lambda$.
  \bigskip
  
  We then obtain the main result in this section, which is similar to that given in \cite{Assem1}.

\begin{teo}\label{mainExt}
An adjoint  pair of  additive  functors 
$\mathcal R:\mathrm{mod}\ \Lambda\rightarrow \mathrm{mod}\ \mathcal U$ and $\mathcal E:\mathrm{mod}\ \mathcal U\rightarrow \mathrm{mod}\ \Lambda$ exists, which preserve orthogonality and exceptionality .
\end{teo}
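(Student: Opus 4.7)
My plan is to define both functors through the equivalence $\mathfrak{f}: (\mathrm{Mod}\ \mathcal{T}, \mathbb{G}(\mathrm{Mod}\ \mathcal{U})) \xrightarrow{\sim} \mathrm{Mod}\ \Lambda$ recalled in Remark \ref{equivalencecomma}, by setting
\[
\mathcal{E}(N) := \mathfrak{f}(0, 0, N), \qquad \mathcal{R}(F) := F^{(2)},
\]
where $F$ corresponds to the triple $(F^{(1)}, f, F^{(2)})$. To check that these restrict to the finitely presented subcategories, I would apply $\mathcal{E}$ to a presentation $(U_1,-) \to (U_0,-) \to N \to 0$, obtaining a presentation of $\mathcal{E}(N)$ by finitely generated projective $\Lambda$-modules via the identification $\mathcal{E}(\mathcal{U}(U,-)) \cong \Lambda((0,0,U),-)$; and dually, restrict a representable $\Lambda((T,0,U),-)$ to $\mathcal{U}$ to obtain $M_T \oplus \mathcal{U}(U,-)$, which is finitely presented under the ``by projectives'' hypothesis implicit in the section title, namely that $M_* = \mathcal{C}(*,-)|_\mathcal{U}$ is a finitely generated projective $\mathcal{U}$-module. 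This is automatic when no relation in $I$ involves the source $*$, since then $M_* \cong \bigoplus_\alpha \mathcal{U}(t(\alpha),-)$, with $\alpha$ ranging over the arrows out of $*$.

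The adjunction $\mathrm{Hom}_\Lambda(\mathcal{E}(N), F) \cong \mathrm{Hom}_\mathcal{U}(N, \mathcal{R}(F))$ reads directly off the comma-category description: a morphism $(0,0,N) \to (F^{(1)}, f, F^{(2)})$ is forced to have zero first component, the compatibility condition with $f$ is then vacuous, and the datum reduces to an arbitrary $\mathcal{U}$-module morphism $N \to F^{(2)}$. Moreover $\mathcal{E}$ is exact by inspection of the comma structure, and sends finitely generated projective $\mathcal{U}$-modules to projective $\Lambda$-modules. Consequently, any projective resolution $P_\bullet \to N$ lifts to a projective resolution $\mathcal{E}(P_\bullet) \to \mathcal{E}(N)$, and the adjunction then yields
\[
\mathrm{Ext}^i_\Lambda(\mathcal{E}(N), \mathcal{E}(N')) \cong \mathrm{Ext}^i_\mathcal{U}(N, N'), \qquad i \ge 0,
\]
from which preservation of orthogonality by $\mathcal{E}$ is immediate, as is the equality $\mathrm{pd}_\Lambda \mathcal{E}(N) = \mathrm{pd}_\mathcal{U} N$ that yields preservation of exceptionality.

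The functor $\mathcal{R}$ is visibly exact, since short exact sequences in the comma category split into their $\mathcal{T}$- and $\mathcal{U}$-components, and the projectivity of $M_*$ ensures that $\mathcal{R}$ preserves projectives as well. The main obstacle will be the comparison of $\mathrm{Ext}^i_\Lambda(X,Y)$ with $\mathrm{Ext}^i_\mathcal{U}(\mathcal{R}X,\mathcal{R}Y)$: the natural counit inclusion $\mathcal{E}\mathcal{R}X \hookrightarrow X$ has cokernel supported entirely on the $\mathcal{T}$-part of $X$, and applying $\mathrm{Hom}_\Lambda(-,Y)$ produces a long exact sequence through which Ext-vanishing must be propagated. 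The expectation is that the ``by projectives'' hypothesis, together with the fact that $\mathcal{T} = \mathrm{add}\{*\}$ has very simple structure (the endomorphism algebra of $*$ being $K$ since $*$ is a source), forces the relevant Ext-groups on the $\mathcal{T}$-side to vanish, so that Ext-vanishing on $\Lambda$ passes down to Ext-vanishing on $\mathcal{U}$; the same long exact sequence also controls projective dimension, giving preservation of exceptionality by $\mathcal{R}$ as well.
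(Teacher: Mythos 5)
Your $\mathcal{E}$ is not the same functor as the paper's. You set $\mathcal{E}(N):=\mathfrak{f}(0,0,N)$, which is the full embedding of $\mathcal{U}$-modules into $\Lambda$-modules described just before Lemma \ref{lemaext}, and as your own adjunction computation shows, this $\mathcal{E}$ is the \emph{left} adjoint of $\mathcal{R}$. The paper's extension functor is instead defined by $(\mathcal{E}G)\left(\begin{smallmatrix}*&0\\M&0\end{smallmatrix}\right):=\mathrm{Hom}_{\mathcal{U}}\bigl(\restr{\mathcal{C}(*,-)}{\mathcal{U}},G\bigr)$, and it is the \emph{right} adjoint of $\mathcal{R}$; the two disagree on the $\mathcal{T}$-component. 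In particular the paper's $\mathcal{E}$ does not preserve projectives, while yours does. This is a genuinely different (and arguably cleaner) route for the $\mathcal{E}$-direction: since your $\mathcal{E}$ is exact, identifies $\mathcal{U}(U,-)$ with the projective $\Lambda\bigl(\left(\begin{smallmatrix}0&0\\M&U\end{smallmatrix}\right),-\bigr)$, and satisfies $\mathcal{R}\mathcal{E}\cong\mathrm{id}$, one gets $\mathrm{Ext}^{i}_{\Lambda}(\mathcal{E}N,\mathcal{E}N')\cong\mathrm{Ext}^{i}_{\mathcal{U}}(N,N')$ immediately by lifting a projective resolution, whereas the paper establishes this by showing $\mathcal{E}G\in S^{perp}$ and invoking the equivalence $\mathrm{mod}\,\mathcal{U}\simeq S^{perp}$ (Propositions \ref{cou}--\ref{extiso}).

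The $\mathcal{R}$-direction, however, is left as an ``expectation'' and this is a real gap. The vague appeal to ``$\mathrm{End}(*)=K$ being simple'' does not make the $\mathcal{T}$-side Ext-groups vanish; what actually does the work is that $\mathrm{pd}_{\Lambda}S\le 1$, which comes from the exact sequence $0\to P_0\to P\to S\to 0$ with $P_0\cong\Lambda\bigl(\left(\begin{smallmatrix}0&0\\M&U_0\end{smallmatrix}\right),-\bigr)$ projective, i.e.\ from exactly the hypothesis you correctly identify that $\restr{\mathcal{C}(*,-)}{\mathcal{U}}\cong\mathcal{U}(U_0,-)$ is a finitely generated projective $\mathcal{U}$-module. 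With that, applying $\mathrm{Hom}_{\Lambda}(-,Y)$ to the canonical sequence $0\to\mathcal{E}\mathcal{R}X\to X\to S^{r_X}\to 0$ and using $\mathrm{Ext}^{i}_{\Lambda}(\mathcal{E}\mathcal{R}X,Y)\cong\mathrm{Ext}^{i}_{\mathcal{U}}(\mathcal{R}X,\mathcal{R}Y)$ (again by lifting a projective $\mathcal{U}$-resolution through $\mathcal{E}$ and using adjunction) yields isomorphisms in degrees $\ge 2$ and an epimorphism in degree $1$, which is what the preservation of orthogonality and, via the control of projective dimension, exceptionality actually require. This is the content of Corollary \ref{pdim1} and Proposition \ref{extiso}; you should spell it out rather than defer to an expectation.
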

  
    Since $Q$ is locally  finite, there is a finite number of neighbours, $u_1, \ldots, u_n$  of $\{*\}$.  Set $U_0:=\oplus_{i=1}^nu_i$,  and consider the morphism $\tilde h:*\rightarrow U_0$ induced by the arrows $\alpha_i:*\rightarrow u_i$, $1\le i\le n$. Thus, for all $u\in Q_0'$ we have an isomorphism
   $\mathcal C(U_0,u) \cong \mathcal C(*, u)$,  $g\mapsto g\tilde h$, which induces an
   isomorphism of $\mathcal U$-modules:
   \begin{equation}\label{isoproj}
   \mathcal U(U_0,-) \cong \mathcal C(*, -)|_{\mathcal U}.
   \end{equation}
      
  Consider the projective $\Lambda$-modules $P=\left(\left(\begin{smallmatrix} * & 0 \\ M & 0 \end{smallmatrix}\right),-\right)$ and $P_0=\left(\left(\begin{smallmatrix} 0 & 0 \\ M & U_0\end{smallmatrix}\right),-\right)$. Thus, given  $X=\left(\begin{smallmatrix}
 T&0\\
 M&U
  \end{smallmatrix}\right)\in\Lambda $, we have an isomorphism of $K$-modules:
   \begin{eqnarray*}
  \varphi_X:P_0(X)&=&\begin{pmatrix} 0 & 0 \\ 0 & \mathcal C(U_0,U) \end{pmatrix}\rightarrow \mathrm{rad} P(X)=\begin{pmatrix} 0 & 0 \\ \mathcal C(*,U) & 0 \end{pmatrix}\\
                         && \begin{pmatrix} 0 & 0 \\ 0 & g\end{pmatrix}\mapsto \begin{pmatrix} 0 & 0 \\ 0 & g \end{pmatrix}\begin{pmatrix} 0 & 0 \\ \tilde{h} & 0 \end{pmatrix}=\begin{pmatrix} 0 & 0 \\ g\tilde{h} & 0\end{pmatrix},
  \end{eqnarray*}
  which is natural in $X$. Thus, we get an isomorphism $\varphi: P_0\rightarrow \mathrm{rad} P$ of $\Lambda$-modules.
  
  Set $S=\frac{P}{\mathrm{rad} P}$. It is clear that  $S$ is a simple  $\Lambda-$module, and it is injective in $(\Lambda, \mathrm{mod}\ K)$ since $\footnotesize S\cong D\left(-, \left(\begin{smallmatrix} * & 0 \\ M & 0 \end{smallmatrix}\right)\right)$, where $D$ is the standard duality. Therefore, we have an exact sequence
   \begin{equation}
   \xymatrix{0 \ar[r] & P_0 \ar[r] & P \ar[r] & S \ar[r] & 0}.
   \end{equation}
  
In what follows, we will refer to objects of $( \U, \mathrm{mod} \ K)$ and $(\Lambda, \mathrm{mod} \ K)$) simply as $\mathcal U$-modules and $\Lambda$-modules, respectively.
  
  We now define a pair of additive   functors, $\footnotesize\mathcal{E}:(\U, \mathrm{mod} \ K) \rightarrow (\Lambda, \mathrm{mod} \ K)$  and   $\footnotesize\mathcal{R}: (\Lambda, \mathrm{mod} \ K)\rightarrow (\U, \mathrm{mod} \ K)$ called \emph{extension} and \emph{restriction}, respectively , and they are defined as follows.
  
 \bigskip
  
  Let $G$ be a $\mathcal U$-module, and consider the pair of objects  $\left(\begin{smallmatrix} 0 & 0 \\ M & U \end{smallmatrix}\right)$ and  $\left(\begin{smallmatrix} * & 0 \\ M & 0 \end{smallmatrix}\right)$ in $\Lambda$. Thus,
\begin{eqnarray*}
(\mathcal{E} G) \begin{pmatrix} * & 0 \\ M & 0 \end{pmatrix} &:=& \mathrm{Hom}_{\U}\left(\restr{\C(*,-)}{\U},G\right) \cong \mathrm{Hom}_{\U}\left(\mathcal U(U_0,-),G\right)\cong G(U_0) \\
 (\mathcal{E} G) \begin{pmatrix} 0 & 0 \\ M & U \end{pmatrix} &:= &\mathrm{Hom}_{\U}\left(\mathcal U(U,-),G\right)\cong G(U).
\end{eqnarray*} 
Let  $F$ be a $\Lambda$-module and  $ U\in \U$. Therefore, $(\mathcal{R}F)(U):=F\left(\begin{smallmatrix} 0 & 0 \\ M & U \end{smallmatrix}\right).$

\bigskip

Clearly, $(\mathcal R, \mathcal E) $  is an adjoint pair of functors, and  both are exact.  

\begin{pro}\label{fp1}
The functor $\mathcal R$ sends finitely presented $\Lambda$-modules into finitely presented $\mathcal U$-modules. 
\end{pro}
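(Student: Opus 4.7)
The plan is to combine the exactness of $\mathcal{R}$ with the observation that $\mathcal{R}$ carries every representable $\Lambda$-module to a finite direct sum of representable $\mathcal{U}$-modules, reducing the statement to a direct Yoneda-style computation.

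First, given a finitely presented $F\in\mathrm{mod}\ \Lambda$, I would pick a presentation $\Lambda(X_1,-)\to \Lambda(X_0,-)\to F\to 0$ and apply the exact functor $\mathcal{R}$; the problem then reduces to showing that $\mathcal{R}(\Lambda(X,-))$ is finitely presented over $\mathcal{U}$ for every $X\in\Lambda$. Writing $X=\left(\begin{smallmatrix} T & 0 \\ M & U \end{smallmatrix}\right)$ with $T\in\mathcal{T}=\mathrm{add}\{*\}$, so $T\cong *^{\oplus k}$ for some finite $k\ge 0$, I would compute directly from the definition of the hom-space in $\Lambda$ that, naturally in $U'\in\mathcal{U}$,
\[
\mathcal{R}(\Lambda(X,-))(U')=\Lambda\!\left(X,\left(\begin{smallmatrix} 0 & 0 \\ M & U' \end{smallmatrix}\right)\right)\cong M(U',T)\oplus \mathcal{U}(U,U'),
\]
using that $\mathcal{T}(T,0)=0$; the naturality of this identification in $U'$ is immediate from the composition rule in $\Lambda$.

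The remaining step is to recognise $M_T=M(-,T)$ as a representable $\mathcal{U}$-module. Additivity of $M$ in its second variable gives $M_T\cong M_*^{\oplus k}$, and by the isomorphism (\ref{isoproj}) one has $M_*=\mathcal{C}(*,-)|_{\mathcal{U}}\cong \mathcal{U}(U_0,-)$. Hence
\[
\mathcal{R}(\Lambda(X,-))\cong \mathcal{U}(U_0^{\oplus k}\oplus U,-),
\]
which is finitely generated projective and in particular finitely presented. Applying this to both $X_0$ and $X_1$ then produces a two-term presentation of $\mathcal{R}(F)$ by finitely generated projective $\mathcal{U}$-modules, completing the argument.

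The main (and really only) substantive ingredient is the isomorphism (\ref{isoproj}), whose key content is that the source $*$ has only finitely many out-neighbours $u_1,\ldots,u_n$ because $Q$ is locally finite, so that $U_0=u_1\oplus\cdots\oplus u_n$ genuinely lies in $\mathcal{U}$. I do not anticipate a real obstacle beyond this; the fact that $\mathcal{T}=\mathrm{add}\{*\}$ is so small is precisely what keeps the computation of $\mathcal{R}(\Lambda(X,-))$ elementary.
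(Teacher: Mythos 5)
Your argument is correct and follows essentially the same route as the paper's: reduce via exactness of $\mathcal{R}$ to the case of representable $\Lambda$-modules, compute $\mathcal{R}$ on representables, and invoke the isomorphism (\ref{isoproj}) (which hinges on $*$ having only finitely many out-neighbours) to see that each $\mathcal{R}(\Lambda(X,-))$ is a finitely generated representable $\mathcal{U}$-module. The paper just packages this by treating the two indecomposable types of representables $Q_*$ and $Q_U$ separately rather than writing $T\cong *^{\oplus k}$ explicitly; the content is identical.
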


\begin{proof}
Set
$Q_*:=\Lambda \left(\left(\begin{smallmatrix} * & 0 \\ M & 0 \end{smallmatrix}\right),-\right)$ and $Q_U:=\Lambda \left(\left(\begin{smallmatrix} 0& 0 \\ M & U \end{smallmatrix}\right),-\right)$. Thus, for all $\overline{U}\in\mathcal U$ we have isomorphisms of $K$-vector spaces $\mathcal U(U_0,\overline U)\cong  \mathcal U(*,\overline U)\cong \mathcal RQ_{*}(\overline U), \theta\mapsto  \left(\begin{smallmatrix} 0 & 0 \\ \theta\tilde{h} & 0 \end{smallmatrix}\right)$ and  $\mathcal U(U,\overline U) \cong \mathcal RQ_{U}(\overline U), \theta\mapsto  \left(\begin{smallmatrix} 0 & 0 \\ 0 & \theta\end{smallmatrix}\right)$, 
which are natural in $\overline{U}$. Thus, we have isomorphisms of $\mathcal U$-modules:
$$\mathcal RQ_U\cong \mathcal U(U,-) \text { and } \mathcal RQ_*\cong \mathcal U(U_0,-). $$Therefore, the restriction $\mathcal R Q$ of a projective finitely generated  $\Lambda$-module  $Q$ is a projective finitely generated $\mathcal U$-module. The rest of the proof follows from the fact that  $\mathcal R$ is an exact functor.
\end{proof}

Later we will see that the extension  functor $\mathcal E$  sends finitely presented \hspace{.6cm} $\mathcal U$-modules into finitely presented $ \Lambda$-modules.
\bigskip 

Given a $\mathcal U$-module $G$, we can see it as a $\Lambda$-module by setting
 $G\left(\begin{smallmatrix} T & 0 \\ M & U \end{smallmatrix}\right)$ $= G(U)$. In this way 
we consider $(\mathcal U, \mathrm{mod} \ K)$  embedded in $(\Lambda, \mathrm{mod} \ K)$ by  identifying  it with the  full subcategory of $(\Lambda, \mathrm{mod} \ K)$  consisting of the $\Lambda-$modules such that $\footnotesize G\left(\begin{smallmatrix} T & 0 \\ M & U \end{smallmatrix}\right)= G \left(\begin{smallmatrix} 0 & 0 \\ M & U \end{smallmatrix}\right)$.
  
Let $X$ be a $\Lambda$-module.  We claim that  $\mathcal{R}X$ is a submodule  of  $X$. In fact,
we have that 
\[
\footnotesize X\begin{pmatrix} * & 0 \\ M & U \end{pmatrix}=X\begin{pmatrix} * & 0 \\ M & 0 \end{pmatrix} \coprod X\begin{pmatrix} 0 & 0 \\ M & U \end{pmatrix}\cong X\begin{pmatrix} * & 0 \\ M & 0 \end{pmatrix} \coprod (\mathcal{R}X)\begin{pmatrix} * & 0 \\ M & U \end{pmatrix},
\]
for all $\left(\begin{smallmatrix} * & 0 \\ M & U \end{smallmatrix}\right)\in \Lambda$. Thus,  $\mathcal{R}$ is a subfunctor of the identity functor $\mathrm{Id}_{\mathrm{mod}\ \Lambda}$.

\begin{lem} The following statements hold.
	\begin{itemize}
		\item[(a)] The functor  $\mathcal{R}$  is the torsion  radical  of the torsion pair
		  $( \mathrm{mod} \  \mathcal U, \mathrm{add}\ S)$ in $ \mathrm{mod}  \ \Lambda$.
		\item[(b)]  Let $X$ be a   $\Lambda-$module. The canonical sequence in this torsion pair
		\begin{equation}\label{eqcan}
			\xymatrix{0 \ar[r] & \mathcal{R}X \ar[r] & X \ar[r] & S^{r_X} \ar[r] & 0}  
		\end{equation}
		
		satisfies that  $r_X=\mathrm{dim}_K\mathrm{Hom}_{\Lambda}(X,S)$.
	\end{itemize}
\end{lem}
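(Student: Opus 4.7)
The plan is to verify the two defining axioms of a torsion pair in $\mathrm{mod}\ \Lambda$ for part (a), and then to read off $r_X$ from the shape of the quotient $X/\mathcal{R}X$ and cross-check it against $\dim_K \mathrm{Hom}_{\Lambda}(X,S)$ for part (b). We already know from the preceding paragraph that $\mathcal{R}$ is a subfunctor of $\mathrm{Id}_{\mathrm{mod}\ \Lambda}$, so every $\Lambda$-module $X$ comes equipped with a canonical short exact sequence $0\to \mathcal{R}X\to X\to X/\mathcal{R}X\to 0$. The remaining tasks are therefore (i) to establish Hom-orthogonality $\mathrm{Hom}_{\Lambda}(G,S^n)=0$ for every $G\in\mathrm{mod}\ \mathcal{U}$ and every $n\geq 0$, and (ii) to identify $X/\mathcal{R}X$ as $S^{r_X}$ for a suitable finite $r_X$.

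For (i), observe that under the embedding $\mathrm{mod}\ \mathcal{U}\hookrightarrow \mathrm{mod}\ \Lambda$ every $\mathcal{U}$-module satisfies $G\bigl(\begin{smallmatrix}*&0\\ M&0\end{smallmatrix}\bigr)=G(0)=0$, while $S$ is concentrated at this object with value $K$ and vanishes on each $\bigl(\begin{smallmatrix}0&0\\ M&U\end{smallmatrix}\bigr)$. Every component of a natural transformation $G\to S^n$ is therefore forced to be zero. For (ii), unwinding the definition of $\mathcal{R}X$ as a subfunctor of $X$ one checks that $(X/\mathcal{R}X)\bigl(\begin{smallmatrix}*&0\\ M&0\end{smallmatrix}\bigr)=X\bigl(\begin{smallmatrix}*&0\\ M&0\end{smallmatrix}\bigr)$ while $(X/\mathcal{R}X)\bigl(\begin{smallmatrix}0&0\\ M&U\end{smallmatrix}\bigr)=0$ for every $U\in\mathcal{U}$. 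Since $*$ is a source and $Q$ is strongly locally finite, $\mathcal{C}$ has no oriented cycle through $*$, so $\mathrm{End}_{\mathcal{C}}(*)=K$; moreover each arrow $\alpha\colon *\to u$ acts as zero on $X/\mathcal{R}X$ because its codomain is zero. As $X\in(\Lambda,\mathrm{mod}\ K)$ forces $X\bigl(\begin{smallmatrix}*&0\\ M&0\end{smallmatrix}\bigr)$ to be finite-dimensional, these observations together produce an isomorphism $X/\mathcal{R}X\cong S^{r_X}$ with $r_X=\dim_K X\bigl(\begin{smallmatrix}*&0\\ M&0\end{smallmatrix}\bigr)$. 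Finite presentation of $\mathcal{R}X$ over $\Lambda$ follows from Proposition \ref{fp1} combined with the fact that $\Lambda\bigl(\bigl(\begin{smallmatrix}0&0\\ M&U\end{smallmatrix}\bigr),-\bigr)$ restricts to $\mathcal{U}(U,-)$ on $\mathcal{U}$ and vanishes on $\bigl(\begin{smallmatrix}*&0\\ M&0\end{smallmatrix}\bigr)$, so any $\mathcal{U}$-module presentation of $\mathcal{R}X$ lifts to a $\Lambda$-module presentation.

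For (b), apply $\mathrm{Hom}_{\Lambda}(-,S)$ to the canonical sequence. Part (i) gives $\mathrm{Hom}_{\Lambda}(\mathcal{R}X,S)=0$, so the induced map $\mathrm{Hom}_{\Lambda}(S^{r_X},S)\to \mathrm{Hom}_{\Lambda}(X,S)$ is an isomorphism; combined with $\mathrm{End}_{\Lambda}(S)\cong \mathrm{End}_{\mathcal{C}}(*)=K$, this yields $\dim_K \mathrm{Hom}_{\Lambda}(X,S)=r_X$, as required. The step I expect to require the most care is the identification in (ii): one must carefully track how each type of morphism (scalar endomorphisms at $*$, arrows $*\to u$, and morphisms internal to $\mathcal{U}$) collapses in the quotient, crucially using that $*$ is a source so that no morphism enters $*$ from $\mathcal{U}$, and one must check that the finite presentation of $X$ propagates to both $\mathcal{R}X$ and the quotient.
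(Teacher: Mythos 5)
Your proof is correct, and for part (a) it follows a genuinely different route from the paper.  You directly verify the two defining axioms of a torsion pair: Hom-orthogonality $\mathrm{Hom}_\Lambda(G,S^n)=0$ for $G\in\mathrm{mod}\ \mathcal U$ (an objectwise check, using that $G$ vanishes at $\bigl(\begin{smallmatrix}*&0\\ M&0\end{smallmatrix}\bigr)$ while $S$ vanishes everywhere else), and the existence of a canonical short exact sequence by explicitly computing $(X/\mathcal R X)\bigl(\begin{smallmatrix}*&0\\ M&0\end{smallmatrix}\bigr)=X\bigl(\begin{smallmatrix}*&0\\ M&0\end{smallmatrix}\bigr)$ and $(X/\mathcal R X)\bigl(\begin{smallmatrix}0&0\\ M&U\end{smallmatrix}\bigr)=0$, then identifying this quotient with $S^{r_X}$ via $\mathrm{End}_{\mathcal C}(*)=K$.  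The paper instead works through the idempotent-radical characterization of torsion pairs: it shows $\mathcal R^2X=\mathcal R X$, that $\mathcal R X=X$ iff $X\in\mathrm{mod}\ \mathcal U$, that $\mathcal R X=0$ iff $X\in\mathrm{add}\ S$, and then uses exactness of $\mathcal R$ to conclude $\mathcal R(X/\mathcal R X)=0$, never explicitly writing out the Hom-orthogonality condition.  Your approach is more self-contained (it does not lean on the torsion-radical formalism) and has the bonus of producing the explicit value $r_X=\dim_K X\bigl(\begin{smallmatrix}*&0\\ M&0\end{smallmatrix}\bigr)$, while the paper's approach is shorter once one accepts the characterization.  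Your argument that finite presentation of $\mathcal R X$ over $\mathcal U$ lifts to a presentation over $\Lambda$ (so that $\mathcal R X$ does land in $\mathrm{mod}\ \Lambda$) is a step the paper elides.  For part (b) the two proofs coincide: both apply $\mathrm{Hom}_\Lambda(-,S)$ to the canonical sequence and use $\mathrm{Hom}_\Lambda(\mathcal R X,S)=0$; you correctly observe that left-exactness already gives the isomorphism, so the paper's invocation of $\mathrm{Ext}^1_\Lambda(S^{r_X},S)=0$ is not strictly needed.
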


\begin{proof}

 \ (a) By that mentioned above, a  $\Lambda-$module $X$  can be seen as a 
 $\U-$ module if and only if  $ X\left(\begin{smallmatrix} T & 0 \\ M & U \end{smallmatrix}\right)\cong 
  X\left(\begin{smallmatrix} 0& 0 \\ M & U \end{smallmatrix}\right)$,  for all  $\left(\begin{smallmatrix} T & 0 \\ M & U \end{smallmatrix}\right)\in \Lambda$. Since $X\left(\begin{smallmatrix} 0 & 0 \\ M & U \end{smallmatrix}\right)= \mathcal{R}X \left(\begin{smallmatrix} T & 0 \\ M & U \end{smallmatrix}\right)$, we conclude that a   $\Lambda-$module $X$  can be seen as a 
 $\U-$ module if and only if $ X\cong\mathcal{R}X$. 
 
 On the other hand, $\footnotesize\mathcal{R}S=0$  since 
$(\mathcal{R}S)(U)=S \left(\begin{smallmatrix} 0 & 0 \\ M & U \end{smallmatrix}\right)=0$. Moreover, if   $\mathcal{R}X=0$, then $\footnotesize X\left(\begin{smallmatrix} 0 & 0 \\ M & U \end{smallmatrix}\right)=0$ for all  $\footnotesize U\in \U$, and therefore  $\footnotesize X\in \mathrm{add}\ S$. Thus $\footnotesize\mathcal{R}X=0$ if and only if $\footnotesize X\in \mathrm{add}\ S$, and it is clear that
$\mathcal R^2X=\mathcal R X$. Applying the exact functor $\mathcal R$ to the short exact sequence of $\Lambda$-modules $0\rightarrow \mathcal R X\rightarrow X\rightarrow X/\mathcal R X\rightarrow 0$
yields $\mathcal R(X/\mathcal R X)=0$.

(b) The statement follows after applying the functor   $\mathrm{Hom}_{\Lambda}(-,S)$  to the canonical sequence (\ref{eqcan}):
		\[
		\xymatrix@R-2pc{0 \ar[r] & \mathrm{Hom}_{\Lambda}(S^{r_X},S) \ar[r] & \mathrm{Hom}_{\Lambda}(X,S) \ar[r] & \mathrm{Hom}_{\Lambda}(\mathcal{R}X,S)= 0 \\
		\ar[r] & \mathrm{Ext}_{\Lambda}^1(S^{r_X},S) = 0.}
		\]
\end{proof}

\begin{cor}\label{pdim1}
For any $\Lambda$-module $X$, the $\mathcal U$-module $\mathcal R X$  is projective (in which case, $\mathrm{pd}_\Lambda X \le  1$) or else  
$\mathrm{pd}_\Lambda X  =\mathrm{pd}_\mathcal U\mathcal R X $.
\end{cor}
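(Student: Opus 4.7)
The plan is to exploit the canonical short exact sequence
\[
0 \to \mathcal R X \to X \to S^{r_X} \to 0
\]
together with the two-term projective resolution $0 \to P_0 \to P \to S \to 0$, which immediately gives $\mathrm{pd}_\Lambda S \le 1$. The computation of $\mathrm{pd}_\Lambda X$ will then be a standard dimension-shift argument applied to the canonical sequence, provided we can control $\mathrm{pd}_\Lambda(\mathcal R X)$ in terms of $\mathrm{pd}_{\mathcal U}(\mathcal R X)$.

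The key preliminary step is to show that the embedding $(\mathcal U, \mathrm{mod}\, K) \hookrightarrow (\Lambda, \mathrm{mod}\, K)$ preserves projective dimension. First, I would verify that each representable $\mathcal U$-module $\mathcal U(U,-)$ coincides, under the embedding, with the representable $\Lambda$-module $\Lambda\bigl(\bigl(\begin{smallmatrix} 0 & 0 \\ M & U \end{smallmatrix}\bigr),-\bigr)$: both vanish on objects of the form $\bigl(\begin{smallmatrix} * & 0 \\ M & 0 \end{smallmatrix}\bigr)$ and agree on the $\mathcal U$-part. Hence any projective resolution of $\mathcal R X$ over $\mathcal U$ transfers to a projective resolution over $\Lambda$, giving $\mathrm{pd}_\Lambda(\mathcal R X) \le \mathrm{pd}_{\mathcal U}(\mathcal R X)$. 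For the reverse inequality I would apply $\mathcal R$ to a projective resolution of $\mathcal R X$ over $\Lambda$: the functor $\mathcal R$ is exact and, by the proof of Proposition~\ref{fp1}, sends each projective $\Lambda$-module to a projective $\mathcal U$-module; combined with the identity $\mathcal R(\mathcal R X) = \mathcal R X$, this yields $\mathrm{pd}_{\mathcal U}(\mathcal R X) \le \mathrm{pd}_\Lambda(\mathcal R X)$. Therefore these two projective dimensions coincide.

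Granted this, I would split the argument into two cases. If $\mathcal R X$ is $\mathcal U$-projective, the canonical sequence expresses $X$ as an extension of $S^{r_X}$ by a projective, so $\mathrm{pd}_\Lambda X \le \max\{0, \mathrm{pd}_\Lambda S\} \le 1$. Otherwise set $n := \mathrm{pd}_{\mathcal U}(\mathcal R X) \ge 1$. For $n \ge 2$, the long exact sequence of $\mathrm{Ext}_\Lambda^\bullet(-, Y)$ applied to $0 \to \mathcal R X \to X \to S^{r_X} \to 0$ forces $\mathrm{Ext}_\Lambda^i(X,-) \cong \mathrm{Ext}_\Lambda^i(\mathcal R X,-)$ for $i \ge 2$ (since $\mathrm{Ext}_\Lambda^i(S,-) = 0$ for $i \ge 2$), so $\mathrm{pd}_\Lambda X = n$. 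For $n = 1$, the same sequence gives $\mathrm{pd}_\Lambda X \le 1$; the reverse inequality follows because if $X$ were $\Lambda$-projective, then $\mathcal R X$ would be $\mathcal U$-projective by exactness of $\mathcal R$ and its preservation of projectives, contradicting $n = 1$.

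The only delicate point — and really the heart of the argument — is the identification of projective dimensions across the embedding; once that is in place, everything else is a routine dimension shift using $\mathrm{pd}_\Lambda S \le 1$.
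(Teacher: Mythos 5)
Your proof is correct and follows essentially the same route as the paper: identify $\mathcal U$-projectives with their images under the full embedding, deduce $\mathrm{pd}_\Lambda(\mathcal R X)=\mathrm{pd}_{\mathcal U}(\mathcal R X)$, and then dimension-shift along the canonical sequence using $\mathrm{pd}_\Lambda S\le 1$. You are in fact slightly more careful than the paper: you justify both directions of the equality of projective dimensions (using that $\mathcal R$ is exact, kills no syzygies of $\mathcal R X$, and sends projectives to projectives), and you handle the $d=1$ case separately, where the bare long exact sequence only yields $\mathrm{pd}_\Lambda X\le 1$ and one needs the observation that $X$ cannot be projective when $\mathcal R X$ is not.
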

\begin{proof}
For all $U\in\mathcal U$, the $\mathcal U$-projective module $\mathcal U(U,-)$  can be identified under the full embedding of $(\mathcal U, \mathrm{mod}\ K)$ in $(\Lambda, \mathrm{mod}\ K) $, with the projective \hspace{1.5cm} $\Lambda$-module  $\left(\left(\begin{smallmatrix} 0 & 0 \\ M & U \end{smallmatrix}\right),-\right) $. If $\mathcal RX$ is  a projective  $\mathcal U$-module, then $\mathrm{pd}_\Lambda S\le 1$ implies $\mathrm{pd}_\Lambda X\le 1$. In other case, if  $\mathrm{ pd}_\mathcal U \mathcal R X=d$, thus  $\mathrm{pd}_\Lambda \mathcal RX=d$, and the sequence (\ref{eqcan}) yields $\mathrm{pd}_\Lambda X=d$. 
\end{proof}

\begin{lem}\label{lemaext}
	Let  $G$ be a  $\U-$module. There is an isomorphism of  $K-$vector   spaces $$\mathrm{Ext}^1_{\Lambda}(S,G)\cong \mathrm{Hom}_{\Lambda}(P_0,G).$$
\end{lem}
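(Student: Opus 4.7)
The plan is straightforward: apply the contravariant functor $\mathrm{Hom}_\Lambda(-,G)$ to the short exact sequence
$$0 \to P_0 \to P \to S \to 0$$
introduced just before this lemma. Since $P$ and $P_0$ are both projective $\Lambda$-modules by construction, this is a projective resolution of $S$ of length one, so the long exact $\mathrm{Ext}$-sequence truncates and will directly identify $\mathrm{Ext}^1_\Lambda(S,G)$ with a cokernel in low degree.

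First I would compute the low-degree Hom groups. Under the full embedding of $(\mathcal{U},\mathrm{mod}\ K)$ into $(\Lambda,\mathrm{mod}\ K)$ recalled just before the lemma, a $\mathcal{U}$-module $G$ is regarded as the $\Lambda$-module satisfying $G\bigl(\begin{smallmatrix} T & 0 \\ M & U \end{smallmatrix}\bigr) = G\bigl(\begin{smallmatrix} 0 & 0 \\ M & U \end{smallmatrix}\bigr)$; in particular $G\bigl(\begin{smallmatrix} * & 0 \\ M & 0 \end{smallmatrix}\bigr)=0$. By Yoneda's lemma, this forces
$$\mathrm{Hom}_\Lambda(P,G) \;\cong\; G\begin{pmatrix} * & 0 \\ M & 0 \end{pmatrix} \;=\; 0.$$
Since $S$ is the simple $\Lambda$-module concentrated at $\bigl(\begin{smallmatrix} * & 0 \\ M & 0 \end{smallmatrix}\bigr)$, and $G$ vanishes there, we likewise get $\mathrm{Hom}_\Lambda(S,G)=0$.

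Then the long exact sequence of $\mathrm{Ext}^{\ast}_\Lambda(-,G)$ applied to $0 \to P_0 \to P \to S \to 0$ reads
$$0 \to \mathrm{Hom}_\Lambda(S,G) \to \mathrm{Hom}_\Lambda(P,G) \to \mathrm{Hom}_\Lambda(P_0,G) \to \mathrm{Ext}^1_\Lambda(S,G) \to \mathrm{Ext}^1_\Lambda(P,G).$$
The first two terms vanish by the previous step, while $\mathrm{Ext}^1_\Lambda(P,G)=0$ since $P$ is projective. Exactness then yields the desired isomorphism $\mathrm{Hom}_\Lambda(P_0,G) \cong \mathrm{Ext}^1_\Lambda(S,G)$.

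There is no real obstacle: the projective presentation of $S$ is already in hand, and the computation rests only on Yoneda's lemma plus the observation that $\mathcal{U}$-modules, viewed as $\Lambda$-modules via the canonical embedding, are supported away from $\bigl(\begin{smallmatrix} * & 0 \\ M & 0 \end{smallmatrix}\bigr)$. The one small subtlety to double-check is that the identification $G\bigl(\begin{smallmatrix} * & 0 \\ M & 0 \end{smallmatrix}\bigr)=0$ holds in $(\Lambda,\mathrm{mod}\ K)$ as stated, but this is built into the embedding description preceding the lemma.
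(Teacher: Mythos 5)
Your proof is correct and coincides with the paper's argument: both apply $\mathrm{Hom}_\Lambda(-,G)$ to the projective presentation $0\to P_0\to P\to S\to 0$ and use Yoneda together with $G\bigl(\begin{smallmatrix} * & 0 \\ M & 0 \end{smallmatrix}\bigr)=0$ to kill the two leftmost terms and $\mathrm{Ext}^1_\Lambda(P,G)$. No differences worth noting.
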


\begin{proof}
	By applying  $\mathrm{Hom}_{\Lambda}(-,G)$ to the exact sequence
	\[
	\xymatrix{0 \ar[r] & P_0 \ar[r] & P \ar[r] & S \ar[r] & 0},
	\]
	we get
	\[
	\xymatrix@R-2pc{0 \ar[r] & \mathrm{Hom}_{\Lambda}(S,G) \ar[r] & \mathrm{Hom}_{\Lambda}(P,G) \ar[r] & \mathrm{Hom}_{\Lambda}(P_0,G) \hspace{.1cm} \\	\ar[r] & \mathrm{Ext}_{\Lambda}^1(S,G) \ar[r] & \mathrm{Ext}_{\Lambda}^1(P,G)= 0\hspace{.1cm}.}
	\]
	Now the desired result follows from the fact
	  $$\tiny \mathrm{Hom}_{\Lambda}(P,G)=\mathrm{Hom}_{\Lambda}\left(\left(\begin{pmatrix} * & 0 \\ M & 0\end{pmatrix}, -\right), G\right)\cong G\left(\begin{pmatrix} * & 0 \\ M & 0\end{pmatrix}\right)=G \left(\begin{pmatrix} 0 & 0 \\ M & 0\end{pmatrix}\right)=0.$$
	Note that for all  $\U-$module $G$ we have
	\begin{equation}
	 \mathrm{Hom}_{\Lambda}(S,G)=0.
	\end{equation}
\end{proof}

 Since  $(\mathcal{R},\mathcal{E})$ is an adjoint pair of functors, there are, associated with it, a counit $\epsilon: \mathcal{R}\mathcal{E} \rightarrow \mathrm{Id}_{\mathrm{Mod}\ \U}$ and a unit $\delta: \mathrm{Id}_{\mathrm{Mod}\ \Lambda}\rightarrow \mathcal{E}\mathcal{R}$.
   
   We have the following result that extends to the  one given in  \cite{Assem1}.

\begin{pro}\label{cou}
	The adjoint pair  $(\mathcal{R},\mathcal{E})$ satisfies of following. 
	\begin{itemize}
		\item[(a)] The counit  $\epsilon$ is a functorial isomorphism.
		\item[(b)]  For all $\Lambda$-module  $X$,  the kernel and cokernel   $\delta_X$ lie in $\mathrm{add}\ S$.
		\item[(c)]  Let $X\in \mathrm{Mod}\ \Lambda$. The following are equivalent:
		\begin{itemize}
			\item[(i)] $\delta_X$ is a monomorphism,
			\item[(ii)] $S$ is not direct summand of  $X$,
			\item[(iii)] $\mathrm{Hom}_{\Lambda}(S,X)=0$.
		\end{itemize}
	\end{itemize}
\end{pro}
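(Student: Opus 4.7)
The plan is to handle the three parts by evaluating the unit and counit on objects, using that every object of $\Lambda$ decomposes into summands of the form $\left(\begin{smallmatrix} * & 0 \\ M & 0 \end{smallmatrix}\right)$ and $\left(\begin{smallmatrix} 0 & 0 \\ M & U \end{smallmatrix}\right)$, and exploiting that $S$ is both simple and injective in $(\Lambda, \mathrm{mod}\ K)$. For part (a), I would directly compute $(\mathcal{R}\mathcal{E} G)(U) = (\mathcal{E} G)\left(\begin{smallmatrix} 0 & 0 \\ M & U \end{smallmatrix}\right) = \mathrm{Hom}_{\mathcal{U}}(\mathcal{U}(U,-), G)$, which is $G(U)$ by Yoneda, naturally in $U$ and $G$. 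The counit $\epsilon_G$ is this Yoneda isomorphism, hence a functorial iso.

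For (b), I would check $\delta_X$ objectwise. At $\left(\begin{smallmatrix} 0 & 0 \\ M & U \end{smallmatrix}\right)$, the target $(\mathcal{E}\mathcal{R} X)\left(\begin{smallmatrix} 0 & 0 \\ M & U \end{smallmatrix}\right) \cong \mathrm{Hom}_{\mathcal{U}}(\mathcal{U}(U,-),\mathcal{R} X) \cong X\left(\begin{smallmatrix} 0 & 0 \\ M & U \end{smallmatrix}\right)$ agrees with the source via Yoneda, and under this identification $\delta_X$ is the identity. Thus $\ker\delta_X$ and $\mathrm{coker}\,\delta_X$ vanish at every object of this form, so they are supported only at $\left(\begin{smallmatrix} * & 0 \\ M & 0 \end{smallmatrix}\right)$. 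Since the endomorphism ring at this object is $K$, any $\Lambda$-module with this property is a direct sum of copies of the simple $S$, so both lie in $\mathrm{add}\ S$.

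For (c), (i)$\Rightarrow$(ii) uses naturality of $\delta$: if $X = S \oplus X'$ then $\delta_X = \delta_S \oplus \delta_{X'}$, and since $\mathcal{R} S = 0$ (as $S$ vanishes on all $\left(\begin{smallmatrix} 0 & 0 \\ M & U \end{smallmatrix}\right)$), we get $\delta_S = 0$, forcing $S$ into the kernel. For (ii)$\Rightarrow$(i), by (b) I have $\ker\delta_X \cong S^k$ for some $k \geq 0$; since $S$ is injective, the inclusion $\ker\delta_X \hookrightarrow X$ splits, so $S^k$ is a direct summand of $X$, and (ii) forces $k = 0$. For (ii)$\Leftrightarrow$(iii), a nonzero morphism $S \to X$ is monic because $S$ is simple, and splits because $S$ is injective. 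The only non-formal input in the whole argument is the injectivity of $S$ in $(\Lambda,\mathrm{mod}\ K)$, already recorded via $S \cong D\left(-, \left(\begin{smallmatrix} * & 0 \\ M & 0 \end{smallmatrix}\right)\right)$; this is what makes the splitting arguments in (c) go through and is, in effect, the heart of the proposition.
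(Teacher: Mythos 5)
Your proof is correct, and part~(a) is exactly the paper's argument (the Yoneda computation $(\mathcal{R}\mathcal{E}G)(U)\cong G(U)$). For parts (b) and (c) the paper simply defers to Assem--Happel--Trepode, so it does not record any details; your objectwise analysis supplies precisely those details. Two small points worth making explicit in (b): the computation shows $\delta_X$ is an isomorphism at every $\bigl(\begin{smallmatrix}0&0\\ M&U\end{smallmatrix}\bigr)$, so $\ker\delta_X$ and $\operatorname{coker}\delta_X$ are supported only at $\bigl(\begin{smallmatrix}*&0\\ M&0\end{smallmatrix}\bigr)$; and one should observe that since $*$ is a source, $\Lambda\bigl(\bigl(\begin{smallmatrix}0&0\\ M&U\end{smallmatrix}\bigr),\bigl(\begin{smallmatrix}*&0\\ M&0\end{smallmatrix}\bigr)\bigr)=0$, so a $\Lambda$-module in $(\Lambda,\mathrm{mod}\,K)$ concentrated at $*$ carries no extra structure beyond a finite-dimensional $K$-space, whence it is a finite direct sum of copies of $S$. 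With that made explicit, your argument is complete and is the intended one.
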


\begin{proof}
We only prove  (a). The rest is analogous to the proof given in \cite[Proposition 2.5]{Assem1}. It is clear that  $\epsilon$  is an isomorphism since, for all $U$, we have 
		\[
		(\mathcal{R}\mathcal{E}G)(U)=(\mathcal{E}G)\begin{pmatrix} 0 & 0 \\ M & U \end{pmatrix}\cong  \mathrm{Hom}_{\mathcal U}(\mathcal U(U,-), G)\cong G(U).
		\]
\end{proof}

It follows from the above result that  $\mathcal{E}$  is a fully faithful functor, \cite[Theorem (IV. 3.3)]{lane1971categories}.  The right perpendicular  category of $S$  is the full subcategory of 
$(\Lambda, \mathrm{mod} \ K)$  defined by 
$$\footnotesize S^{perp}=\{X \in (\Lambda, \mathrm{mod}\ K) \hspace{.1cm}|\hspace{.1cm} \mathrm{Hom}_{\Lambda}(S,X)=0,\mathrm{Ext}^1_{\Lambda}(S,X)=0\}.$$

\subsection { Homological properties of functors $(\mathcal R,\mathcal E)$}

It then follows that $\delta_X$ is a functorial isomorphism for all
 $X\in S^{perp}$; see \cite[Lemma 3.1]{Assem1}. Let $G$ be a $\mathcal U$-module. Thus, by Proposition \ref{cou}  there exists an exact sequence called \emph{the extension sequence}  
 \begin{equation}\label{extension}
 \footnotesize\xymatrix{0 \ar[r] & G \ar[r]^{\delta_G} & \mathcal{E}\mathcal{R}G \ar[r] & S^{e_G} \ar[r] & 0}, 
 \end{equation}
  which coincides with the restriction sequence for $\mathcal E RG\cong \mathcal E G$. In particular, $e_G=r_{\mathcal{ER}G}$.
 
\begin{pro}\label{Sperp}
  Let $G$ be a $\mathcal U$-module. The extension sequence  satisfies the following 
properties
\begin{itemize}
\item [(a)] $e_G=\mathrm{dim}_K\mathrm{Ext}_\Lambda ^1(S,G).$
\item [(b)] The connecting morphism
 $\mathrm{Hom}_\Lambda(S,S^{e_G})\rightarrow \mathrm{Ext}_\Lambda ^1(S,G)$ is an isomorphism.
\item [(c)] $\mathcal E G\in S^{perp}$.
 \end{itemize}
 \end{pro}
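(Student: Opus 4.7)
I would organize the argument so that part (c) is established first; parts (b) and (a) then follow formally from (c) via the long exact sequence attached to the extension sequence (\ref{extension}) together with the simplicity of $S$ and the identification $\mathrm{End}_{\Lambda}(S)\cong K$. The main obstacle lies in (c), specifically in the $\mathrm{Ext}^{1}$-vanishing.

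For (c), that $\mathcal{E}G\in S^{perp}$, I would exploit the adjunction $(\mathcal{R},\mathcal{E})$. The vanishing $\mathrm{Hom}_{\Lambda}(S,\mathcal{E}G)=0$ is immediate from $\mathcal{R}S=0$: indeed $S\left(\begin{smallmatrix}0&0\\M&U\end{smallmatrix}\right)=0$ for every $U\in\mathcal{U}$, since $S$ is simple and supported on the object $\left(\begin{smallmatrix}*&0\\M&0\end{smallmatrix}\right)$, so the adjunction gives $\mathrm{Hom}_{\Lambda}(S,\mathcal{E}G)\cong\mathrm{Hom}_{\mathcal{U}}(\mathcal{R}S,G)=0$. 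For $\mathrm{Ext}^{1}_{\Lambda}(S,\mathcal{E}G)=0$, I would apply $\mathrm{Hom}_{\Lambda}(-,\mathcal{E}G)$ to the short exact sequence $0\to P_{0}\to P\to S\to 0$ and, via the adjunction, identify the induced map $\mathrm{Hom}_{\Lambda}(P,\mathcal{E}G)\to\mathrm{Hom}_{\Lambda}(P_{0},\mathcal{E}G)$ with $\mathrm{Hom}_{\mathcal{U}}(\mathcal{R}P,G)\to\mathrm{Hom}_{\mathcal{U}}(\mathcal{R}P_{0},G)$ induced by $\mathcal{R}(\varphi)$. Using the identifications $\mathcal{R}P\cong\mathcal{U}(U_{0},-)\cong\mathcal{R}P_{0}$ that appear in the proof of Proposition \ref{fp1}, a short trace through the definitions of $\varphi$ and of the isomorphism (\ref{isoproj}) shows that $\mathcal{R}(\varphi)$ corresponds to the identity on $\mathcal{U}(U_{0},-)$; the cokernel, which computes $\mathrm{Ext}^{1}_{\Lambda}(S,\mathcal{E}G)$, therefore vanishes.

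With (c) in hand, part (b) is obtained by applying $\mathrm{Hom}_{\Lambda}(S,-)$ to the extension sequence (\ref{extension}). Feeding in the vanishings from (c) together with the previously observed $\mathrm{Hom}_{\Lambda}(S,G)=0$ collapses the long exact sequence to an isomorphism $\mathrm{Hom}_{\Lambda}(S,S^{e_{G}})\xrightarrow{\sim}\mathrm{Ext}^{1}_{\Lambda}(S,G)$, which is precisely (b). For (a), I would use that $\mathrm{End}_{\Lambda}(S)\cong K$: any endomorphism of $S$ is determined by its value on $S\left(\begin{smallmatrix}*&0\\M&0\end{smallmatrix}\right)=K$, and every scalar extends to a natural transformation because $S$ vanishes on all other indecomposables. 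Hence $\dim_{K}\mathrm{Hom}_{\Lambda}(S,S^{e_{G}})=e_{G}$, and part (a) follows from (b). The only delicate point is the Ext-vanishing step in (c), which rests on recognising $\mathcal{R}(\varphi)$ as an isomorphism under (\ref{isoproj}); everything else is formal.
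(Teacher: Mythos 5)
Your proof is correct, but it takes a genuinely different route from the paper's. The paper proves (a) directly: it evaluates the extension sequence (\ref{extension}) at an object $\left(\begin{smallmatrix} * & 0 \\ M & U \end{smallmatrix}\right)$, reads off $e_G=\dim_K \mathrm{Hom}_{\mathcal U}\bigl(\restr{\C(*,-)}{\U},G\bigr)=\dim_K\mathrm{Hom}_\Lambda(P_0,G)$, and then invokes Lemma \ref{lemaext} to convert the last expression into $\dim_K\mathrm{Ext}^1_\Lambda(S,G)$; parts (b) and (c) are left to the reference \cite{Assem1}. You instead establish (c) first and then treat (a) and (b) as formal consequences of the long exact sequence for $\mathrm{Hom}_\Lambda(S,-)$. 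Your proof of (c) is the genuinely new content: applying $\mathrm{Hom}_\Lambda(-,\mathcal E G)$ to $0\to P_0\to P\to S\to 0$, transporting via the adjunction to $\mathrm{Hom}_{\mathcal U}(\mathcal R P,G)\to\mathrm{Hom}_{\mathcal U}(\mathcal R P_0,G)$, and observing that $\mathcal R(\varphi)$ is the identity on $\mathcal U(U_0,-)$ under the identifications $\mathcal RP\cong\mathcal U(U_0,-)\cong\mathcal RP_0$. Note that this last step is what replaces Lemma \ref{lemaext}, and it is not an instance of it: the paper's lemma exploits $\mathrm{Hom}_\Lambda(P,G)\cong G\left(\begin{smallmatrix}*&0\\M&0\end{smallmatrix}\right)=0$ for $G$ viewed as a $\Lambda$-module, whereas in your setting $\mathrm{Hom}_\Lambda(P,\mathcal EG)\cong G(U_0)$ is generally nonzero, so the vanishing has to come from the cokernel computation. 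The organization you propose has the advantage of deriving (a) and (b) without re-examining the extension sequence objectwise and of making the logical dependence on (c) explicit, which the paper leaves implicit; the paper's direct computation of (a) is shorter if one is willing to cite \cite{Assem1} for the rest.
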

 
 \begin{proof}
 We only prove (a). The rest of the proof is similar to that given in \cite{Assem1}. 
 Evaluating the exact sequence (\ref{extension}) in  the object  $\left(\begin{smallmatrix} * & 0 \\ M & U \end{smallmatrix}\right)\in\Lambda$, we have
\[
\footnotesize\begin{tikzcd} 
	0 \ar[r] & G(U) \arrow[r] & \left(\restr{\C(*,-)}{\U} ,G\right)\coprod G(U) \arrow[r] & \left[S\begin{pmatrix} * & 0 \\ M & U \end{pmatrix}\right]^{e_G} \arrow[r] & 0 
\end{tikzcd} 
\]
since  $\mathcal{E}\mathcal{R}G\cong \mathcal{E}G$. By Lemma \ref{lemaext}, we obtain
$\footnotesize e_G=\mathrm{dim}_K\left(\restr{\C(*,-)}{\U} ,G\right)=\mathrm{dim}_KHom_{\Lambda}(P_0 ,G)=\mathrm{dim}_K\mathrm{Ext}^1_{\Lambda}(S,G)$. 
  \end{proof}

It follows that $\mathrm{mod}\ \U$ and $S^{perp}$  are equivalent categories.  
 \begin{cor}\label{equiv1} 
 The functors $\mathcal E$ and $\mathcal R$ induce an equivalence between $\mathrm{mod}\ \mathcal U$ 
 and $S^{perp}$,  and $\mathcal E$  is sends finitely presented $\mathcal U$-modules into finitely presented $\Lambda$-modules.
 \end{cor}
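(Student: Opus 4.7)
The plan is to assemble two independent threads: the equivalence between $(\mathcal{U},\mathrm{mod}\ K)$ and $S^{perp}$, and the preservation of finite presentation by $\mathcal{E}$.

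For the equivalence, I combine Proposition~\ref{cou}(a), Proposition~\ref{Sperp}(c), and the observation made just before Proposition~\ref{Sperp} that the unit $\delta_X$ is a functorial isomorphism precisely for $X\in S^{perp}$. Proposition~\ref{Sperp}(c) ensures $\mathcal{E}G\in S^{perp}$ for every $\mathcal{U}$-module $G$, so $\mathcal{E}$ restricts to a functor $(\mathcal{U},\mathrm{mod}\ K)\to S^{perp}$. Proposition~\ref{cou}(a) provides the functorial isomorphism $\epsilon:\mathcal{R}\mathcal{E}\xrightarrow{\sim}\mathrm{Id}$, while $\delta|_{S^{perp}}$ is invertible. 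Hence $\mathcal{E}$ and $\mathcal{R}|_{S^{perp}}$ form a pair of quasi-inverse equivalences.

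For the second assertion, let $G\in\mathrm{mod}\ \mathcal{U}$ with a finite presentation $\mathcal{U}(U_1,-)\to \mathcal{U}(U_0,-)\to G\to 0$. Under the natural full embedding $(\mathcal{U},\mathrm{mod}\ K)\hookrightarrow (\Lambda,\mathrm{mod}\ K)$, each representable $\mathcal{U}(U_i,-)$ is identified with $\Lambda\bigl(\bigl(\begin{smallmatrix} 0 & 0 \\ M & U_i \end{smallmatrix}\bigr),-\bigr)$, so $G$ is automatically finitely presented as a $\Lambda$-module. Now I invoke the extension sequence~(\ref{extension}):
\[
0\to G\to \mathcal{E}G\to S^{e_G}\to 0.
\]
By Proposition~\ref{Sperp}(a) together with Lemma~\ref{lemaext} and Yoneda, $e_G=\dim_K G(U_0)$, which is finite because $G$ takes values in $\mathrm{mod}\ K$. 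The simple module $S$ is finitely presented via the resolution $0\to P_0\to P\to S\to 0$, so $S^{e_G}$ is a finitely presented $\Lambda$-module. Since the class of finitely presented $\Lambda$-modules is closed under extensions, $\mathcal{E}G$ is finitely presented.

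The only delicate point is confirming that the embedding $(\mathcal{U},\mathrm{mod}\ K)\hookrightarrow (\Lambda,\mathrm{mod}\ K)$ carries representable $\mathcal{U}$-modules to representable $\Lambda$-modules, which follows directly from the defining property of the embedding (namely $G\bigl(\bigl(\begin{smallmatrix} T & 0 \\ M & U \end{smallmatrix}\bigr)\bigr)=G(U)$). With this in hand, the two ingredients above give the required equivalence, and restricting the quasi-inverse pair $(\mathcal{E},\mathcal{R})$ to finitely presented objects yields the restricted equivalence between $\mathrm{mod}\ \mathcal{U}$ and $S^{perp}\cap \mathrm{mod}\ \Lambda$ together with the stated preservation property of $\mathcal{E}$.
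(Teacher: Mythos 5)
Your proof is correct and follows essentially the same path as the paper: the equivalence is assembled from Propositions~\ref{cou}(a) and \ref{Sperp}(c) together with the invertibility of $\delta$ on $S^{perp}$ (the paper cites \cite[Lemma 3.1]{Assem1} for that step), and the preservation of finite presentation is obtained from the extension sequence~(\ref{extension}) using that $G$ and $S$ (hence $S^{e_G}$) are finitely presented $\Lambda$-modules. You simply spell out a couple of details the paper leaves implicit, namely the finiteness of $e_G$ via Proposition~\ref{Sperp}(a) and Lemma~\ref{lemaext}, and the closure of $\mathrm{mod}\ \Lambda$ under extensions.
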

 
 \begin{proof}
 The first statement is a consequence of  Propositions \ref{cou} (a), \ref{Sperp} (c) and \cite[Lemma 3.1]{Assem1}.
 
 On the other hand, assume  that $G\in \mathrm{mod} \ \mathcal U$. $G$ is then seen as a $\Lambda$-module. Since 
 $\mathcal E\mathcal R G\cong \mathcal E G$, it follows from the exact sequence (\ref{extension})  that $\mathcal EG$ belongs to $\mathrm{mod} \ \Lambda$ since $S$  lies in $\mathrm{mod}\  \Lambda$.
 \end{proof}

The following result extends the one given in \cite{Assem1}, which has a similar proof.

\begin{pro}\label{extiso}
Let $X$ and $Y$ be in $\mathrm{mod} \ \Lambda$.Then:
\begin{itemize}
\item[(a)] There is an epimorphism $\mathrm{Ext}^1_\Lambda (X, Y )\rightarrow \mathrm{Ext}^1_\mathcal U (\mathcal R X, \mathcal R Y ) .$
\item[(b)] There is an isomorphism $\mathrm{Ext}^j_\Lambda (X, Y ) \cong \mathrm{Ext}^j_\mathcal U(\mathcal R X, \mathcal R Y )$, for
each $j \ge  2$.
\item[(c)] If $Y \in S^{perp}$, then the epimorphism of (a) is an isomorphism.

\end{itemize}
\end{pro}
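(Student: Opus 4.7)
The plan is to route both $\mathrm{Ext}^j_\Lambda(X,Y)$ and $\mathrm{Ext}^j_\mathcal{U}(\mathcal{R}X,\mathcal{R}Y)$ through the auxiliary group $\mathrm{Ext}^j_\Lambda(X,\mathcal{E}\mathcal{R}Y)$. The first comparison will come from the unit $\delta_Y\colon Y\to\mathcal{E}\mathcal{R}Y$, whose kernel and cokernel lie in $\mathrm{add}\ S$, combined with the injectivity of $S$ in $(\Lambda,\mathrm{mod}\ K)$. The second will come from the adjunction $(\mathcal{R},\mathcal{E})$ together with the fact that $\mathcal{R}$ is exact and takes projectives to projectives.

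For the adjunction side, I will choose a projective resolution $P_\bullet\to X\to 0$ in $\mathrm{Mod}\ \Lambda$ formed from direct sums of the representables $Q_*$ and $Q_U$. The computation inside the proof of Proposition \ref{fp1} gives $\mathcal{R}Q_*\cong\mathcal{U}(U_0,-)$ and $\mathcal{R}Q_U\cong\mathcal{U}(U,-)$, so $\mathcal{R}$ sends projective $\Lambda$-modules to projective $\mathcal{U}$-modules; together with the exactness of $\mathcal{R}$, this makes $\mathcal{R}P_\bullet\to\mathcal{R}X$ a projective resolution in $\mathrm{Mod}\ \mathcal{U}$. Applying the adjunction isomorphism $\mathrm{Hom}_\mathcal{U}(\mathcal{R}(-),\mathcal{R}Y)\cong\mathrm{Hom}_\Lambda(-,\mathcal{E}\mathcal{R}Y)$ termwise and taking cohomology then yields
\[
\mathrm{Ext}^j_\mathcal{U}(\mathcal{R}X,\mathcal{R}Y)\ \cong\ \mathrm{Ext}^j_\Lambda(X,\mathcal{E}\mathcal{R}Y) \qquad (j\ge 0).
\]

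For the unit side, Proposition \ref{cou}(b) provides an exact sequence
\[
0\to K\to Y\xrightarrow{\delta_Y}\mathcal{E}\mathcal{R}Y\to C\to 0, \qquad K,C\in\mathrm{add}\ S,
\]
which I will split through $L:=\mathrm{Im}\,\delta_Y$ into short exact sequences $0\to K\to Y\to L\to 0$ and $0\to L\to\mathcal{E}\mathcal{R}Y\to C\to 0$. Because $S\cong D\left(-,\left(\begin{smallmatrix}*&0\\M&0\end{smallmatrix}\right)\right)$ is injective in $(\Lambda,\mathrm{mod}\ K)$, the groups $\mathrm{Ext}^j_\Lambda(X,K)$ and $\mathrm{Ext}^j_\Lambda(X,C)$ vanish for every $j\ge 1$. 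The long exact sequence obtained from the first short sequence gives isomorphisms $\mathrm{Ext}^j_\Lambda(X,Y)\cong\mathrm{Ext}^j_\Lambda(X,L)$ for every $j\ge 1$, while the one from the second yields $\mathrm{Ext}^j_\Lambda(X,L)\cong\mathrm{Ext}^j_\Lambda(X,\mathcal{E}\mathcal{R}Y)$ for $j\ge 2$ and a canonical surjection $\mathrm{Ext}^1_\Lambda(X,L)\twoheadrightarrow\mathrm{Ext}^1_\Lambda(X,\mathcal{E}\mathcal{R}Y)$ in degree one. Composing with the isomorphisms of the preceding paragraph proves (a) and (b).

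For (c), assume $Y\in S^{perp}$. Then by \cite[Lemma 3.1]{Assem1} the unit $\delta_Y$ is an isomorphism, so $K=C=0$; the surjection in degree one thus becomes an isomorphism and (a) sharpens to (c). The main obstacle is modest bookkeeping at the $j=1$ term, namely identifying the kernel of the degree-one surjection (which is controlled by the cokernel of $\mathrm{Hom}_\Lambda(X,\mathcal{E}\mathcal{R}Y)\to\mathrm{Hom}_\Lambda(X,C)$); a small technical point is that $\mathcal{E}\mathcal{R}Y$ remains in $\mathrm{mod}\ \Lambda$, as guaranteed by Corollary \ref{equiv1}.
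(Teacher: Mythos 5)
Your proof is correct and follows the same natural route the paper (via its reference to \cite{Assem1}) intends: compare $\mathrm{Ext}^{j}_{\Lambda}(X,Y)$ with $\mathrm{Ext}^{j}_{\Lambda}(X,\mathcal{E}\mathcal{R}Y)$ using the unit $\delta_{Y}$ and the injectivity of $S$, and compare the latter with $\mathrm{Ext}^{j}_{\mathcal{U}}(\mathcal{R}X,\mathcal{R}Y)$ via the adjunction, using that $\mathcal{R}$ is exact and takes projectives to projectives. Your long-exact-sequence bookkeeping in degrees one and two is accurate, and the appeal to Proposition~\ref{cou}(b) and Corollary~\ref{equiv1} is the intended one.
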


\bigskip

\begin{proof}[Proof of Theorem \ref{mainExt} ]
The first part is a consequence of Proposition \ref{fp1} and Corollary \ref{equiv1}. 
Let $M$ and $N$ be orthogonal $\mathcal U$-modules. By Proposition \ref{Sperp}  (c), $\mathcal E N, \mathcal E M\in S^{perp}$ so that, by Corollary  \ref{equiv1},
$$\mathrm{Ext}^j_\Lambda(\mathcal EM,\mathcal EN)\cong \mathrm{Ext}^j_\mathcal U(\mathcal {RE}M,\mathcal {RE} N)\cong \mathrm{Ext}^j_\mathcal U(M,N)=0.$$ Thus, $\mathcal E M$ and $\mathcal EN$ are orthogonal.

Let $X$ and $Y$  be orthogonal $\Lambda$-modules. Therefore, Proposition \ref {extiso} (b) yields,  $\mathrm{Ext}^j_\Lambda(X,Y)\cong \mathrm{Ext}^j_\mathcal U(\mathcal {R}X, \mathcal{R}Y)\cong
 0$, for each $ j \ge  2$,  and  by \ref {extiso} (a) $\mathrm{Ext}^1_\Lambda( X,Y)$ $=0$ implies $ \mathrm{Ext}^1_\mathcal U(\mathcal {R}X, \mathcal{R}Y)\cong
 0$. Thus, $\mathcal R X$ and $\mathcal R Y$ are   orthogonal. The statement about exceptionality follows from Corollary \ref{pdim1}.
\end{proof}

\bigskip

We finish this section  showing with a couple of examples how we can extend classical tilting  categories in functor  categories of path categories by using the developed theory.

 We now recall the definition of classical tilting subcategories in functor categories. Recall that an \emph{annuli variety} is  an additive category where the idempotents split \cite{Aus}. The following definition is given in \cite{MVO1}.
 
\begin{defi}
	Let $\mathcal{C}$ be an annuli variety. A subcategory  $\mathcal{T}$ of $\mathrm{Mod}\ \mathcal{C}$  is a classical tilting category if the following holds:
	\begin{enumerate}
		\item[(i)] $\mathrm{pd}\ \mathcal{T}\leq 1$.
		\item[(ii)] $\mathrm{Ext}^1_{\mathcal{C}}(T_i,T_j)=0$, for all pair of objects $T_i,T_j\in \mathcal{T}$.
		\item[(iii)] For all object $C$ in $\mathcal{C}$, there exists an exact sequence  
		\[
		\xymatrix{0 \ar[r] & \mathcal{C}(C,-) \ar[r] & T_1 \ar[r] & T_2 \ar[r] & 0},
		\]
		with  $T_1,T_2\in \mathcal{T}$.
	\end{enumerate}
	
	A subcategory of $\mathrm{Mod}\ \C$ that satisfies (i) and (ii) is called partial tilting.
\end{defi}

\begin{ex}
Let  $\mathcal{T}$ be a classical tilting category in  $\mathrm{Mod}\ \U$.  Thus the full category  $\mathcal E(\mathcal T)$ of $\mathrm{Mod}\ \Lambda$ consisting of the objects $\mathcal ET$, $T\in\mathcal T$ can be extended to a classical tilting category in $\mathrm{Mod}\ \Lambda$. 
\bigskip

(i) Since $\mathrm{pd}\  T\le 1$ and $\mathrm{pd} \ S^{e_T}\le 1$,  it follows from the exact sequence
\[
\xymatrix{0 \ar[r] & T \ar[r] & \mathcal{E}T \ar[r] & S^{e_T} \ar[r] & 0}
\]
 that $\mathrm{pd}\  \mathcal ET\le 1$.

 (ii)  By Theorem   \ref{mainExt}, $\mathrm{Ext}^1_{\Lambda}(\mathcal{E}T,\mathcal{E}T')=0$, for every pair of objects $T,T'\in\mathcal T.$ Thus, $\mathcal{E}T$ is  a partial tilting category in $\mathrm{Mod}\ \Lambda$. Thus, by using the Bongartz argument \cite{Bongartz}, \cite[Theorem 7]{MVO1}, we can obtain a  classical tilting category in  $\mathrm{Mod}\ \Lambda$ from the partial tilting category $\mathcal E(\mathcal T)$ in  $\mathrm{Mod}\ \U$.   A similar argument  allows to obtain a classical tilting category $\mathcal R(\mathcal T)$ in $\mathrm{Mod}\ \U$  from a classical tilting category $\mathcal T$  in $\mathrm{Mod}\ \Lambda$  by using the restriction functor $\mathcal R$. 
 \end{ex}

\begin{ex}	Label the vertices of  $\mathbb{Z} A_{\infty}$ in the following way:
	\[
\tiny\begin{tikzcd}[cramped, row sep=1em, column sep=.15em]
	 & & (1,-1) \ar[rd]& & \hspace{.07cm} (1,0) \hspace{.07cm} \ar[rd]& & \hspace{.07cm} (1,1) \hspace{.07cm} \ar[rd]& & (1,2) \ar[rd]& & (1,3) \ar[rd]& &  \\ 
	\hspace{.01cm} & (2,-2) \arrow[l,phantom,"\cdots \hspace{.15cm}"] \ar[ru] \ar[rd]& & (2,-1) \ar[ru] \ar[rd]& & \hspace{.07cm} (2,0) \hspace{.07cm} \ar[ru] \ar[rd]& & (2,1) \ar[ru] \ar[rd]& & (2,2) \ar[ru] \ar[rd]& & (2,3) \arrow[r,phantom,"\hspace{.15cm} \cdots"] & \hspace{.01cm}  \\ 
	& & (3,-2) \ar[rd] \ar[ru]& & (3,-1) \ar[rd] \ar[ru]& & \hspace{.07cm} (3,0) \hspace{.07cm} \ar[rd] \ar[ru]& & (3,1) \ar[rd] \ar[ru]& & (3,2) \ar[rd] \ar[ru]& &   \\ 
	\hspace{.01cm} & (4,-3) \arrow[l,phantom,"\cdots \hspace{.15cm}"] \ar[ru] \ar[rd]& & (4,-2) \ar[ru] \ar[rd]& & (4,-1) \ar[ru] \ar[rd]& & (4,0) \ar[ru] \ar[rd]& & (4,1) \ar[ru] \ar[rd]& &  (4,2) \arrow[r,phantom,"\hspace{.15cm} \cdots"] & \hspace{.01cm}  \\ 
	& & (5,-3)\ar[ru] \arrow[d,phantom,"\vdots"] & & (5,-2) \ar[ru] \arrow[d,phantom,"\vdots"] & & (5,-1) \ar[ru] \arrow[d,phantom,"\vdots"] & & (5,0) \ar[ru] \arrow[d,phantom,"\vdots"] & & (5,1) \ar[ru] \arrow[d,phantom,"\vdots"] & &  \\ 
	& & \hspace{.01cm} & & \hspace{.01cm} & & \hspace{.01cm} & & \hspace{.01cm} & & \hspace{.01cm} & & . 
\end{tikzcd}
\]

	Let $(r,s)\in\mathbb{N}\times \mathbb Z$. Define the representation
	$T{(r,s)}=(V_{ij}, f_\alpha)_{(i,j)\in \mathbb{N}\times \mathbb Z} $, given by
	\begin{displaymath}
		V_{ij} = \left\{ \begin{array}{ll}
			K & \mbox{if $i\geq r$ and $-(i-r-s)\leq j\leq s$;} \\
			0 & \mbox{in other case.} 
		\end{array}
		\right.
	\end{displaymath}
and $f_\alpha=1_K: V_{rt}\rightarrow  V_{uv}$ if $V_{rt}= V_{uv}=K$ and 
$f_\alpha=0$ in other case:

\[
\tiny\begin{tikzcd}[cramped, row sep=1em, column sep=.1em]
	& & \hspace{.22cm} 0 \hspace{.22cm} \ar[rd]& & \hspace{.22cm} 0 \hspace{.22cm} \ar[rd]& & \hspace{.22cm} K \hspace{.22cm} \ar[rd,"1"]& & \hspace{.22cm} 0 \hspace{.22cm} \ar[rd]& & \hspace{.22cm} 0 \hspace{.22cm} \ar[rd]& &  \\ 
	\hspace{.51cm} & \hspace{.15cm} 0 \hspace{.15cm} \arrow[l,phantom,"\cdots"] \ar[ru] \ar[rd]& & \hspace{.22cm} 0 \hspace{.22cm} \ar[ru] \ar[rd]& & \hspace{.22cm} K \hspace{.22cm} \ar[ru,"1"] \ar[rd,"1"]& & \hspace{.22cm} K \hspace{.22cm} \ar[ru] \ar[rd,"1"]& & \hspace{.22cm} 0 \hspace{.22cm} \ar[ru] \ar[rd]& & \hspace{.15cm} 0 \hspace{.15cm} \arrow[r,phantom,"\cdots"] & \hspace{.01cm}  \\ 
	& & \hspace{.22cm} 0 \hspace{.22cm} \ar[rd] \ar[ru]& & \hspace{.22cm} K \hspace{.22cm} \ar[rd,"1"] \ar[ru,"1"]& & \hspace{.22cm} K \hspace{.22cm} \ar[rd,"1"] \ar[ru,"1"]& & \hspace{.22cm} K \hspace{.22cm} \ar[rd,"1"] \ar[ru]& & \hspace{.22cm} 0 \hspace{.22cm} \ar[rd] \ar[ru]& &   \\ 
	\hspace{.51cm} & \hspace{.15cm} 0 \hspace{.15cm} \arrow[l,phantom,"\cdots"] \ar[ru] \ar[rd]& & \hspace{.22cm} K \hspace{.22cm} \ar[ru,"1"] \ar[rd,"1"]& & \hspace{.22cm} K \hspace{.22cm} \ar[ru,"1"] \ar[rd,"1"]& & \hspace{.22cm} K \hspace{.22cm} \ar[ru,"1"] \ar[rd,"1"]& & \hspace{.22cm} K \hspace{.22cm} \ar[ru] \ar[rd,"1"]& & \hspace{.15cm} 0 \hspace{.15cm} \arrow[r,phantom,"\cdots"] & \hspace{.01cm}  \\ 
	& & \hspace{.22cm} K \hspace{.22cm} \ar[ru,"1"] \arrow[d,phantom,"\vdots"] & & \hspace{.22cm} K \hspace{.22cm} \ar[ru,"1"] \arrow[d,phantom,"\vdots"] & & \hspace{.22cm} K \hspace{.22cm} \ar[ru,"1"] \arrow[d,phantom,"\vdots"] & & \hspace{.22cm} K \hspace{.22cm} \ar[ru,"1"] \arrow[d,phantom,"\vdots"] & & \hspace{.22cm} K \hspace{.22cm} \ar[ru] \arrow[d,phantom,"\vdots"] & &  \\ 
	& & \hspace{.01cm} & & \hspace{.01cm} & & \hspace{.01cm} & & \hspace{.01cm} & & \hspace{.01cm} & & . \\
\end{tikzcd}
\]
\vspace{-.3cm}
\[\tiny{T(1,1)}\]

Consider the path category  $K(\mathbb{Z} A_{\infty},\sigma)$ of the quiver $\mathbb{Z} A_{\infty}$ modulo the mesh ideal; see \cite{Rin2}, for more details.

	In \cite[Theorem 5.4]{Martin}, it is proved that  $\mathrm{add} \{T(r,s)\}_{(r,s)\in \mathbb{N}\times \mathbb{Z}}$ is a classical tilting category in  $\mathrm{Mod}(K(\mathbb{Z} A_{\infty},\sigma))$. Consider the path category $\Lambda=K(\mathbb{Z} A_{\infty}\cup \{*\rightarrow (1,1)\},\sigma)$, modulo the ideal generated by the mesh relations. Thus, $\Lambda\cong\begin{pmatrix} * & 0 \\ M & K(\mathbb{Z} A_{\infty},\sigma) \end{pmatrix}$, where 
	$M=K(\mathbb{Z} A_{\infty},\sigma)\otimes\mathrm{add}\{*\}\rightarrow \mathrm{mod} \ K$ is given by
	$M(U,*)=\Lambda(*,U)$ for all vertex $U$ in $\mathbb{Z} A_{\infty}$: 

	\[
\tiny\begin{tikzcd}[cramped, row sep=1.2em, column sep=.07em]
	& & & & & & * \ar[d]& & \hspace{.75cm} & & \hspace{.75cm} & &  \\ 
	& & (1,-1) \ar[rd]& & \hspace{.07cm} (1,0) \hspace{.07cm} \ar[rd]& & \hspace{.07cm} (1,1) \hspace{.07cm} \ar[rd]& & (1,2) \ar[rd]& & (1,3) \ar[rd]& &  \\ 
	\hspace{.01cm} & (2,-2) \arrow[l,phantom,"\cdots\hspace{.15cm}"] \ar[ru] \ar[rd]& & (2,-1) \ar[ru] \ar[rd]& & \hspace{.07cm} (2,0) \hspace{.07cm} \ar[ru] \ar[rd]& & (2,1) \ar[ru] \ar[rd]& & (2,2) \ar[ru] \ar[rd]& & (2,3) \arrow[r,phantom,"\hspace{.15cm}\cdots"] & \hspace{.01cm}  \\ 
	& & (3,-2) \ar[rd] \ar[ru]& & (3,-1) \ar[rd] \ar[ru]& & \hspace{.07cm} (3,0) \hspace{.07cm} \ar[rd] \ar[ru]& & (3,1) \ar[rd] \ar[ru]& & (3,2) \ar[rd] \ar[ru]& &   \\ 
	\hspace{.01cm} & (4,-3) \arrow[l,phantom,"\cdots\hspace{.15cm}"] \ar[ru] \ar[rd]& & (4,-2) \ar[ru] \ar[rd]& & (4,-1) \ar[ru] \ar[rd]& & (4,0) \ar[ru] \ar[rd]& & (4,1) \ar[ru] \ar[rd]& &  (4,2) \arrow[r,phantom,"\hspace{.15cm}\cdots"] & \hspace{.01cm}  \\ 
	& & (5,-3)\ar[ru] \arrow[d,phantom,"\vdots"] & & (5,-2) \ar[ru] \arrow[d,phantom,"\vdots"] & & (5,-1) \ar[ru] \arrow[d,phantom,"\vdots"] & & (5,0) \ar[ru] \arrow[d,phantom,"\vdots"] & & (5,1) \ar[ru] \arrow[d,phantom,"\vdots"] & &  \\ 
	& & \hspace{.01cm} & & \hspace{.01cm} & & \hspace{.01cm} & & \hspace{.01cm} & & \hspace{.01cm} & & . 
\end{tikzcd}
\]
\vspace{-.3cm}
\[\tiny{\Lambda=K(\mathbb{Z} A_{\infty}\cup \{*\rightarrow (1,1)\},\sigma)}\]

	We see that $\{\mathcal{E}T(r,s)\coprod S\}_{(r,s)\in \mathbb{N}\times \mathbb{Z}}$ is a classical tilting category in $\mathrm{Mod}\ \Lambda$. $(i)$ Since $\mathrm{pd}S\leq 1$, it follows that $\mathrm{pd}(\mathcal{E}T(r,s)\coprod S)\leq 1$.
	
	$(ii)$ Let $(r,s),(r',s')\in \mathbb{N}\times \mathbb{Z}$.  First note that by Corollary 3.5 we have
	 $\mathrm{Ext}^1_{\Lambda}(S, \mathcal{E}T(r',s'))\cong$ $ 
	 \mathrm{Ext}^1_{\Lambda}(\mathcal RS, T(r',s'))=0$ because
	 $\mathcal RS=0$. Moreover, since  $\mathcal{E}$ preserves orthogonality  and $S$ is an injective $\Lambda-$module, we have  $$\mathrm{Ext}^1_{\Lambda}(\mathcal{E}T(r,s)\coprod S,\mathcal{E}T(r',s')\coprod S)=0.$$
	
	$(iii)$ 
	Set $E_j^i$ the vertex $(i,j)\in (\mathbb{Z} A_{\infty})_0$. Note that $\restr{\Lambda(*,-)}{K(\mathbb{Z} A_{\infty},\sigma)}\cong K(\mathbb{Z} A_{\infty},\sigma)(E_1^1,-)$. Let $(r,s)\in \mathbb{N}\times \mathbb{Z}$. By Yoneda's lemma, we have isomorphisms  $(\mathcal{E}T(1,1))\begin{pmatrix} 0 & 0 \\ M & E_s^r \end{pmatrix}\cong T(1,1)(E_s^r)$ and $(\mathcal{E}T(1,1))\begin{pmatrix} * & 0 \\ M & 0 \end{pmatrix}\cong T(1,1)(E_1^1)\cong K$; therefore, $\mathcal{E}T(1,1)\cong T(1,1)\coprod S$. On the other hand, if   $(r,s)\neq (1,1)$, one can see that $\mathcal ET(r,s)\cong T(r,s)$. Thus, $T(1,1)$ is the unique tilting object that suffers a change under the functor $\mathcal E$.
	
	For  each $(r,s)\in \mathbb{N}\times\mathbb{Z}$, we can see $T(r,s)$ as an object in $\mathrm{Mod}\ \Lambda$, and we have a resolution in  $\mathrm{Mod}\ \Lambda$ of the form
	\[
	\xymatrix{0\ar[r]&(E_s^r,-) \ar[r]&T(1,r+s-1)\ar[r]&T(r+1,s-1)\ar[r]&0}.
	\]
	In addition, for  projective  $(*,-)$ we have the  resolution
	\[
	\xymatrix{0\ar[r]&(*,-) \ar[r]&T(1,1)\coprod S\ar[r]&T(2,0)\ar[r]&0}.
	\]
	
\[
\begin{tikzcd}[
	ampersand replacement=\&,
	execute at end picture={\scoped[on background layer]
\fill[pattern=north east lines, pattern color=blue]  (a.center) -- (b.center) -- (c.center) -- cycle;}, execute at end picture={\scoped[on background layer]\fill[pattern=north east lines, pattern color=blue]  (e.center) -- (b.center) -- (c.center) -- cycle;}, execute at end picture={\scoped[on background layer]\fill[pattern=north east lines, pattern color=blue]  (d.center) -- (b.center) -- (e.center) -- cycle;}, execute at end picture={\scoped[on background layer]
\fill[pattern=north east lines, pattern color=blue]  (f.center) -- (e.center) -- (c.center) -- cycle;}, execute at end picture={\scoped[on background layer]\fill[pattern=north east lines, pattern color=blue]  (d.center) -- (e.center) -- (h.center) -- cycle;}, execute at end picture={\scoped[on background layer]
\fill[pattern=north east lines, pattern color=blue]  (e.center) -- (f.center) -- (i.center) -- cycle;}, execute at end picture={\scoped[on background layer]\fill[pattern=north east lines, pattern color=blue]  (d.center) -- (g.center) -- (h.center) -- cycle;}, execute at end picture={\scoped[on background layer]
\fill[pattern=north east lines, pattern color=blue]  (e.center) -- (h.center) -- (i.center) -- cycle;},	execute at end picture={
\scoped[on background layer]\fill[pattern=north east lines, pattern color=blue]  (f.center) -- (i.center) -- (j.center) -- cycle;}, row sep=1.2em, column sep=.05em]
\& \& \& \& \& \& K \ar[d,"1"] \& \& \hspace{.75cm} \& \& \hspace{.75cm} \& \&  \\ 
\& \& \hspace{.22cm} 0 \hspace{.22cm} \ar[rd]\& \& \hspace{.22cm} 0 \hspace{.22cm} \ar[rd]\& \& \hspace{.22cm} K \hspace{.22cm} \ar[rd,"1"]\& \& \hspace{.22cm} 0 \hspace{.22cm} \ar[rd]\& \& \hspace{.22cm} 0 \hspace{.22cm} \ar[rd]\& \&  \\ 
\hspace{.01cm} \& \hspace{.15cm} 0 \hspace{.15cm} \arrow[l,phantom,"\cdots"] \ar[ru] \ar[rd]\& \& \hspace{.22cm} 0 \hspace{.22cm} \ar[ru] \ar[rd]\& \& |[alias=a]|\hspace{.22cm} K \hspace{.22cm} \ar[ru,"1"] \ar[rd,"1"]\& \& \hspace{.22cm} K \hspace{.22cm} \ar[ru] \ar[rd,"1"]\& \& \hspace{.22cm} 0 \hspace{.22cm} \ar[ru] \ar[rd]\& \& \hspace{.15cm} 0 \hspace{.15cm} \arrow[r,phantom,"\cdots"] \& \hspace{.01cm}  \\ 
\& \& \hspace{.22cm} 0 \hspace{.22cm} \ar[rd] \ar[ru]\& \& |[alias=b]|\hspace{.22cm} K \hspace{.22cm} \ar[rd,"1"] \ar[ru,"1"]\& \& |[alias=c]|\hspace{.22cm} K \hspace{.22cm} \ar[rd,"1"] \ar[ru,"1"]\& \& \hspace{.22cm} K \hspace{.22cm} \ar[rd,"1"] \ar[ru]\& \& \hspace{.22cm} 0 \hspace{.22cm} \ar[rd] \ar[ru]\& \&   \\ 
\hspace{.01cm} \& \hspace{.15cm} 0 \hspace{.15cm} \arrow[l,phantom,"\cdots"] \ar[ru] \ar[rd]\& \& |[alias=d]|\hspace{.22cm} K \hspace{.22cm} \ar[ru,"1"] \ar[rd,"1"]\& \& |[alias=e]|\hspace{.22cm} K \hspace{.22cm} \ar[ru,"1"] \ar[rd,"1"]\& \& |[alias=f]|\hspace{.22cm} K \hspace{.22cm} \ar[ru,"1"] \ar[rd,"1"]\& \& \hspace{.22cm} K \hspace{.22cm} \ar[ru] \ar[rd,"1"]\& \& \hspace{.15cm} 0 \hspace{.15cm} \arrow[r,phantom,"\cdots"] \& \hspace{.01cm}  \\ 
\& \& |[alias=g]|\hspace{.22cm} K \hspace{.22cm} \ar[ru,"1"] \arrow[d,phantom,"\vdots"] \& \& |[alias=h]|\hspace{.22cm} K \hspace{.22cm} \ar[ru,"1"] \arrow[d,phantom,"\vdots"] \& \& |[alias=i]| \hspace{.22cm} K \hspace{.22cm} \ar[ru,"1"] \arrow[d,phantom,"\vdots"] \& \& |[alias=j]|\hspace{.22cm} K \hspace{.22cm} \ar[ru,"1"] \arrow[d,phantom,"\vdots"] \& \& \hspace{.22cm} K \hspace{.22cm} \ar[ru] \arrow[d,phantom,"\vdots"] \& \&  \\ 
\& \& \hspace{.01cm} \& \& \hspace{.01cm} \& \& \hspace{.01cm} \& \& \hspace{.01cm} \& \& \hspace{.01cm} \& \& 
\end{tikzcd}
\]
\[\tiny{	\xymatrix{0\ar[r]&(*,-) \ar[r]&T(1,1)\coprod S\ar[r]&T(2,0)\ar[r]&0}
}\]
\end{ex}

\footnotesize

\vskip3mm \noindent Rafael Francisco Ochoa de la Cruz:\\ Instituto de Matem\'aticas, Universidad Nacional Aut\'onoma de M\'exico\\
Circuito Exterior, Ciudad Universitaria,
C.P. 04510, M\'exico, D.F. MEXICO.\\ {\tt rafaelfochoa88@gmail.com}

\vskip3mm \noindent Martin Ort\'iz Morales:\\ Facultad de Ciencias, Universidad  Aut\'onoma del Estado de M\'exico\\
Campus    Universitario ``El Cerrillo, Piedras Blancas'', Carretera   Toluca-Ixtlahuaca     Km.   15.5, Estado   de   M\'exico. CP 50200.
\\ {\tt mortizmo@uaemex.mx}


\begin{thebibliography}{20}
\bibitem{Assem} Assem I.,  Simson D., Skowr\'onski A. {\it{Elements of the Representation Theory of Associative Algebras 1: Techniques of Representation Theory}}, London Mathematical Society Student Texts 65, Cambridge University Press, Cambridge, 2006.
\bibitem{Assem1} Assem I., Happel D.,  Trepode  S. {\it{Extending Tilting Modules to One-Point Extensions by Projectives}}, Communications in Algebra, 35:10, 2983-3006, DOI: 10.1080/00927870701404556 , 2007.
\bibitem{Aus} Auslander M. {\it{Representation Theory of Artin Algebras I}},  Comm. in algebra, 3(1), 177-268, 1974.
\bibitem{Aus1} Auslander M., Reiten I.  {\it{Stable equivalence of dualizing R-varieties*}}, Advances in Mathematics, Vol. 12, Issue 3, 1974.
\bibitem{Aus2} Auslander M.  {\it{A functorial approach to representation theory}}, Representations of algebras, Lecture Notes in Mathematics, Vol. 944, Springer-Verlag, 105-179, 1982.
\bibitem{Aus3} Auslander M., Pltzeck M. I., Reiten I. {\it{Coxeter functors without diagrams}}, Trans. Amer. Math. Soc. 250, 1-46, 1979.
\bibitem{Aus4} Auslander M.,  Reiten I.,  Smal\o{} S.  {\it{Representation theory of artin algebras}}, Studies in Advanced Mathematics 36, Cambridge University Press, 1995.

%
\bibitem{Barot} Barot M.  {\it{Introduction to the Representation Theory of Algebras}}, Springer International Publishing, X, 179, 2015.
\bibitem{Bautista} Bautista R., Liu S., Paquette Ch. {\it{Representation theory of strongly locally finite quivers}},
Proceedings of the London Mathematical Society, Volume 106, Issue 1, 2013.
\bibitem{Bongartz} Bongartz K. {\it{Tilted algebras}}, Representations of Algebras, Lecture Notes in Mathematics, Vol. 903, Springer, Berlin-New York, Auslander M., Lluis E. (eds), https://doi.org/10.1007/BFb0092982, 26-38, 1981.

\bibitem{CPS0} Cline E., Parshall B. J., and Scott L. L. {\it{Algebraic stratification in reprsentation categories}}, Journal of algebra, 117, Issue 2, 504-521, 1988.
\bibitem{Chase}  Chase S. U. {\it{A generalization of the ring of triangular matrices}}, Nagoya Math. J. 18, 13-25, 1961.
\bibitem{Dlab0} Dlab V. {\it{Quasi-hereditary algebras}}, Appendix to Drozd Y. A., Kirichenko V. V. {\it{Finite dimensional algebras}}, Springer-Verlag, 1993.
\bibitem{Dlab} Dlab V. {\it{Quasi-hereditary algebras revisited}}, An. St. Univ. Ovidius. Constanta 4, 43-54, 1996.
\bibitem{DR3} Dlab V., Ringel C. M. {\it{Representations of graphs and algebras}}, Memoirs of the A.M.S., No. 173, 1976.
\bibitem{DR2} Dlab V., Ringel C. M. {\it{Quasi-hereditary algebras}}, Illinois Journal of Mathemathics 33, No. 2, 280-291, 1989.
\bibitem{DR1} Dlab V., Ringel C. M. {\it{The Module Theoretical Approach to Quasi-hereditary Algebras}}, Repr. Theory and Related Topics, London Math. Soc. LNS, 168, 200-224, 1992.
\bibitem{Fields} Fields K. L. {\it{On the global dimension of residue rings}}, Pac. J. Math. 32, 345-349, 1970.
\bibitem{Fre} Frey P. {\it{Representations in abelian category}}, Proceedings of the conference on categorical algebra, La Jolla, 95-120, 1996.
\bibitem {Fossum} Fossum R. M., Griffith P. A., Reiten I. {\it{Trivial Extensions of Abelian Categories}}, Lecture Notes in Mathematics, No. 456, Springer, Berlin, Heidelberg, New York, 1975.
\bibitem{Gordon}  Gordon R., Green E. L.{\it{ Modules with cores and amalgamations of indecomposable modules}}, Memoirs of the A.M.S., No. 187, 1978.
\bibitem{Haghany} Haghany A.,  Varadarajan K. {\it{Study of formal triangular matrix rings}}, Comm. in algebra 27 (11), 5507-5525, 1999.
\bibitem{He} Heller A. {\it{Homological algebra in abelian categories}}, The Annals of Math, Second Series, Vol. 68, N0. 3, 484-525, 1958.
\bibitem{Krause}  Krause H. {\it{Krull-Schmidt categories and projective covers}}, Expo. Math., 33, Num. 4,  535-549, 2015.
\bibitem{Krause2} Krause H. {\it{Highest weight categories and recollements}}, Annales de l'Institut Fourier, Vol. 67, No. 6, 2679-2701, 2017.
\bibitem{LGOS1} Leon-Galeana A., Ortiz-Morales M., Santiago V. {\it{Triangular Matrix Categories: Dualizing Varietes and Generalized one-point extension}}, Preprint arXiv:1903.03914.
\bibitem{LGOS2} Leon-Galeana A., Ortiz-Morales M., Santiago V. {\it{Triangular Matrix Categories: Recollements and functorially finite subcategories}}, Preprint arXiv:1903.03926. 
\bibitem {Leszczynski} Leszczy\'nski Z. {\it{On the representation type of tensor product algebras}}, Fundamenta Mathematicae, 144, 1994.

\bibitem{lane1971categories} MacLane S. {\it{Categories for the working mathematician}}, Springer-Verlag, New York, Graduate Texts in Mathematics, Vol. 5, 1971.
\bibitem{Mitchell} Mitchell B. {\it{Rings with several objects}}, Advances in Math., 8, 1-161, 1972.
\bibitem{Marcos} Marcos E., Mendoza O., S\'aenz C., Santiago V., {\it{Standardly stratified lower triangular $\mathbb K$-algebras with enough idempotents}}, Preprint arXiv:2101.10879.
\bibitem{Martin} Ortiz M. {\it{The Auslander-Reiten components seen as quasi-hereditary categories}}. Applied categorical Structures 26, 239-285, 2018. 
\bibitem{MVO2} Mart\'{\i}nez-Villa R., Ort\'{\i}z-Morales M. {\it{Tilting theory and functor categories II. Generalized Tilting}}, Applied categorical structures, Vol. 21, 311-348, 2013.
\bibitem{MVO1} Mart\'{\i}nez-Villa R., Ort\'{\i}z-Morales M. {\it{Tilting theory and functor categories I. Classical Tilting}}, Applied categorical structures, Vol. 22, 595-646, 2014.
\bibitem{MVS1} Mart\'{\i}nez-Villa R., Solberg \O. {\it{Graded and Koszul categories}}, Applied categorical structures, Vol. 18, 615-652, 2010.
\bibitem{MVS2} Mart\'{\i}nez-Villa R., Solberg \O. {\it{Artin-Schelter regular algebras and categories}}, Journal of pure and applied algebra, Vol. 215, 546-565, 2011.
\bibitem{MVS3} Mart\'{\i}nez-Villa R., Solberg \O. {\it{Noetherianity and Gelfand-Kirilov dimension of components}}, Journal of algebra, 323, No. 5, 1369-1407, 2010.
\bibitem{CPS1} Parshall B. J., and Scott L.L. {\it{Derived categories, quasi-hereditary algebras and algebraic groups}}, Algebra, Proc. Workshop, Ottawa/Can. 1987, Math. Lect. Note Ser., Expo. Math., CRAF, Carleton Univ. 3, 105, 1988.
\bibitem{Rin} Ringel C. M. {\it{The category of modules with good filtrations over a quasi-hereditary algebra has almost split sequences}}, Math. Z. 208, 209-223, 1991.
\bibitem{Rin2} Ringel C. M. {\it{Tame Algebras and Integral Quadratic Forms}}, Lecture Notes in Mathematics, Springer-Verlag Berlin Heidelberg, XVI, 380, 1984.
\bibitem{Scott}  Scott L. L. {\it{Simulating algebraic geometry with algebra I: The algebraic theory of derived categories}}, in The Arcata Conference on Representations of Finite Groups (Arcata, Calif., 1986) Proc. Sympos. Pure Math. AMS, 47, pp. 2, 71-281, 1987.
\bibitem{Oystein} Skarts\ae terhagen I. \O{}. {\it{Quivers and admissible relations of tensor products and trivial extensions}}, Master of Science in Mathematics, Norwegian University of Science and Technology, Department of Mathematical Sciences, 2011. http://hdl.handle.net/11250/258909 

\bibitem{Zhu} Zhu B. {\it{Triangular Matrix Algebras Over Quasi-Hereditary Algebras}}, Tsukuba J. Math., Vol. 25, No. 1, 1-11, 2001.
\end{thebibliography}
\end{document}